\documentclass[11pt,reqno]{amsart}
%\pdfoutput=1
%\usepackage[T1]{fontenc}
\usepackage[toc,page]{appendix}
\usepackage{amsmath,amsfonts,amssymb,amsthm,mathtools}
\usepackage{thmtools}
\usepackage{thm-restate}
\usepackage[vcentermath]{youngtab}
\usepackage[all]{xy}
\usepackage{csquotes}
\usepackage[usenames,dvipsnames]{xcolor}
\usepackage[demo]{graphicx}
\usepackage{subcaption}
\usepackage{enumerate}
\usepackage{pstricks}
\usepackage{filecontents}
\usepackage{graphicx}
\usepackage{cite}
\usepackage{setspace}
\usepackage{tabu}
%\linespread{1.5}
\usepackage{tikz}
\usetikzlibrary{calc}
\usetikzlibrary{decorations.markings}
\usetikzlibrary{patterns}
\usepackage[margin=1in]{geometry}
%\allowdisplaybreaks
\usepackage{siunitx} %
%\sisetup{detect-all, detect-display-math}
\usepackage{hyperref}
\usepackage{cleveref}
\hypersetup{colorlinks,  citecolor=blue,  linkcolor=BrickRed, urlcolor=black}

\usetikzlibrary{chains}
\usepackage{indentfirst}
\usepackage{comment}
\usepackage{todonotes}
\setlength{\parskip}{0pt plus 1pt minus 1pt}

\DeclareFontFamily{OT1}{rsfs}{}
\DeclareFontShape{OT1}{rsfs}{n}{it}{<-> rsfs10}{}
\DeclareMathAlphabet{\mathscr}{OT1}{rsfs}{n}{it}

\DeclareMathOperator{\Reg}{Reg}

\DeclareMathOperator{\Tr}{T}

\DeclareMathOperator{\Core}{Core}

\DeclareMathOperator{\res}{res}
\DeclareMathOperator{\reg}{Reg}
\DeclareMathOperator{\sgn}{sgn}
\DeclareMathOperator{\M}{M}
\DeclareMathOperator{\Cr}{Colreg}

\DeclareMathOperator{\X}{X}
\DeclareMathOperator{\I}{I}
\DeclareMathOperator{\J}{J}

\DeclareMathOperator{\Sr}{Colseg}

\makeatletter
\renewcommand{\boxed}[1]{\text{\fboxsep=.2em\fbox{\m@th$\displaystyle#1$}}}
\makeatother

\newcommand{\con}{\textrm{-}}

\newcommand\hsl{\widehat{\mathfrak{sl}}}
\newcommand{\properideal}{%
  \mathrel{\ooalign{$\lneq$\cr\raise.22ex\hbox{$\lhd$}\cr}}}

\theoremstyle{plain}
\newtheorem{Theorem}{Theorem}[section]
\newtheorem{Proposition}[Theorem]{Proposition}
\newtheorem{Lemma}[Theorem]{Lemma}

\newtheorem{Conjecture}[Theorem]{Conjecture}
\newtheorem{Example}[Theorem]{Example}

\theoremstyle{definition}
\newtheorem{Definition}[Theorem]{Definition}

\theoremstyle{remark}
\newtheorem{Remark}[Theorem]{Remark}

\usepackage[foot]{amsaddr}

\makeatletter
\renewcommand{\email}[2][]{%
  \ifx\emails\@empty\relax\else{\g@addto@macro\emails{,\space}}\fi%
  \@ifnotempty{#1}{\g@addto@macro\emails{\textrm{(#1)}\space}}%
  \g@addto@macro\emails{#2}%
}
\makeatother

\tracingmacros=1
\title{The title}%

\author{Allen Wang}
\address[A.Wang]{Department of Mathematics\\ Massachusetts Institute of Technology\\ Cambridge, MA, 02139, USA}
\email[A. Wang]{awang23@mit.edu}

\author{Guangyi Yue}
\address[G. Yue]{Department of Mathematics\\ Massachusetts Institute of Technology\\ Cambridge, MA, 02139, USA}
\email[G. Yue]{gyyue@mit.edu}

\begin{document}

\setcounter{tocdepth}{4}

\title[Mullineux Involution and the Generalized Regularization]{Relationship Between Mullineux Involution and the Generalized Regularization}
\maketitle
\begin{abstract}
The Mullineux involution is an important map on $p$-regular partitions that originates from the modular representation theory of $\mathcal{S}_n$. In this paper we study the Mullineux transpose map and the generalized column regularization and prove a condition under which the two maps are exactly the same. Our results generalize the work of Bessenrodt, Olsson and Xu, and the combinatorial constructions is related to the Iwahori-Hecke algebra and the global crystal basis of the basic $U_q(\hsl_b)$-module. In the conclusion, we provide several conjectures regarding the $q$-decomposition numbers and generalizations of results due to Fayers.
\end{abstract}

\tableofcontents

\section{Introduction}
The Mullineux involution appears in the study of modular representations of symmetric groups, and the difficult combinatorial construction of the Mullineux involution is known to admit a conceptual description in terms of modular representations and crystals. Since the irreducible $p\con$modular representations of the symmetric group $\mathcal{S}_n$ are labeled by the $p\con$regular partitions of $n$, our study of the Mullineux involution as an  algebraic operator is related to the underlying combinatorics of partitions. Given an irreducible representation $\rho_\lambda$, the new representation obtained by taking the tensor product with the one-dimensional sign representation $\sgn$ corresponds to the Mullineux involution of $\lambda^{\M_p}$. This notion defines $\lambda^{\M_p}$ via the equation:
$$\rho_{\lambda^{\M_p}}=\rho_\lambda\otimes\sgn.$$ 

There are a few combinatorial definitions of $\M_p$ in \cite{kleshchev1996branching,ford1997proof} where $p$ is not necessarily prime, and all of them are highly non-trivial. In this paper, we focus on the combinatorics of $\M_p$, so we use the parameter $b$ instead of $p$ since our parameters are generally positive integers.

Walker, Bessenrodt, Olsson, Xu, and Fayers studied the combinatorial properties of Mullineux involution by relating it to another operation named (resp. column) regularization $\Reg_{b}$ (resp. $\Cr_{b}$). These two operations are naturally related by the Iwahori-Hecke algebra $\mathcal{H}=\mathcal{H}_{\mathbb{F},q}(S_n)$ of the symmetric group where $\mathbb{F}$ is a field and $q\in\mathbb{F}\setminus\{0\}$. Let $b$ be the minimal integer satisfying $1+q+\cdots+q^{b-1}=0$ and the decomposition numbers $d_{\lambda,\mu}=\left[S^{\lambda}:D^{\mu}\right]$ be the multiplicity of the simple module $ D^{\mu}$ ($\mu$ is $b$-regular) inside the Specht module $S^{\lambda}$ of $\mathcal{H}$. Previous works show that the identities for these decomposition numbers involve certain combinatorics of partitions. James proved that $d_{\lambda,\mu} = 0 $ unless $\mu\unlhd\lambda^{\Reg_{1,b}}$ and $d_{\lambda,\lambda^{\Reg_{b}}}=1$ \cite{james1976decomposition}. And later, Lascoux, Leclerc and Thibon  proved the identity $d_{\lambda,\mu}=d_{\lambda^{\Tr},\mu^{\M_b}}$ \cite{Lascoux1996}. Therefore, it is natural to study the relationship between the Mullineux involution and the (resp. column) regularization map. In combinatorics, Walker proved that when the partition $\lambda$ is horizontal or row-stable, it satisfies $\lambda^{\M_b\Tr} = \lambda^{\Cr_{b}}$ \cite{walker1994modular,walker1996horizontal}. Later, Bessenrodt, Olsson, and Xu showed in \cite{bessenrodt1999properties} that Walker's conditions can be broadened to the short-legged (or shallow) partitions, namely for every hook in the partition divisible by $b$, the length of the corresponding arm is at least $(b-1)$ times that of the leg. Even better, Bessenrodt, Olsson, and Xu proved that those partitions are the only ones satisfying $\lambda^{\M_b\Tr} = \lambda^{\Cr_{b}}$. Fayers generalized Bessenrodt, Olsson, and Xu's result by considering partitions that are not necessarily $b$-regular. The Specht module $S^\lambda$ is reducible when $\lambda^{\Reg_b\M_b} \neq\lambda^{\Tr\Reg_{b}}$ and Fayers proved in \cite{fayers2008regularisation} that the identity $\lambda^{\Reg_b\M_b} = \lambda^{\Tr\Reg_{b}}$ holds if and only if hooks divisible by $b$ must be either shallow or steep. However, in all these previous works, the operators $\Reg_b$ and $\Cr_b$ only involved a single parameter $b$. 

A second parameter was added by Dimakis and the second author in \cite{Dimakis2018} where the parameters of (resp. column) regularization $\Reg_{a,b}$ (resp. $\Cr_{a,b}$) were extended to any positive rational number $\frac{a}{b}$ in the unit interval. In \cite{Dimakis2018}, the composition of a certain series of column regularizations and Mullineux transposes were shown to be same when applied to the one-row partition, giving a series of monotonically decreasing partitions. On one hand, this result gives a special situation where the simpler operation, column regularization, can be used to understand the Mullineux map, whose combinatorial definition is more convoluted; on the other hand, it proves a special case of Bezrukavnikov's conjecture stated in the appendix of \cite{Dimakis2018}. In this paper, we extend the idea of choosing a suitable parameter $a$ and finding the condition under which Mullineux transpose is identical to the generalized column regularization, generalizing the result of Bessenrodt, Olsson, and Xu in \cite{bessenrodt1999properties}, meanwhile shedding light on other cases of Bezrukavnikov's conjecture. The main theorem is as follows:

\begin{restatable}{thm}{mainthm}\label{theorem generalizing BOX}
Given positive integers $a<b$, let $\lambda$ be a partition satisfying $\lambda^{\Cr_{a,b}}\in \mathcal{P}$ such that all hooks $H_{i,j}$ in $\lambda$ with $b\mid H_{i,j}$ satisfy:
\begin{equation}\label{equation: shallow}
    \left(\frac{b}{a}-1\right)l_{i,j} < a_{i,j} +1,
\end{equation}
then $\lambda$ is $b\con$regular and $\lambda^{\M_b\Tr} = \lambda^{\Cr_{a,b}}$.
\end{restatable}

The theorem is proved combinatorially in Section 3 by analyzing the Young diagrams. The proof follows after completely characterizing the shape of the partitions satisfying the inequalities in Equation \eqref{equation: shallow}.

The rest of the paper is organized as follows. Section 2 is an overview of combinatorics involved. Section 3 presents a detailed description of partitions satisfying the conditions in Theorem \ref{theorem generalizing BOX} and concludes with the proof Theorem \ref{theorem generalizing BOX}. Section 4 first recalls the basic facts about Iwahori-Hecke algebras that motivates the representation-theoretic significance of the main result. The section concludes with conjectures regarding the converse of Theorem \ref{theorem generalizing BOX} and a generalization of Fayers's main theorem from \cite{fayers2008regularisation}.

{\bf Acknowledgements.} The authors thank Roman Bezrukavnikov and Richard Stanley for useful conversations and comments. They would also like to thank Panagiotis Dimakis for discussions and proofreadings of the manuscript. The first author is grateful to the MIT-PRIMES program for facilitating a part of this research.

\section{Preliminaries}

In this section, we introduce the fundamental vocabulary and notation needed for the remainder of the paper.

A \textit{partition} $\lambda=(\lambda_1,\ldots, \lambda_k)$ of $n\in \mathbb{N}$ is a tuple of non-increasing positive integers, i.e. $\lambda_1\ge\cdots\ge \lambda_k>0$, and $|\lambda| = \sum_{i=1}^k\lambda_i = n$ is called the \textit{size} of $\lambda$. When appropriate, we also append infinite zeros at the end of $\lambda$, i.e., $\lambda_{k+1}=\lambda_{k+2}=\cdots=0$. We denote $\mathcal{P}_n$ to be the set of all partitions of $n$ and $\mathcal{P}=\bigcup_{n\geq0}\mathcal{P}_n$. Denote $l(\lambda) = k$ to be the number of nonzero parts of $\lambda$. Given two positive integers $i\leq j$, denote $\lambda_{[i,j]}$ to be the subpartition $(\lambda_i,\ldots,\lambda_j)$. Given a positive integer $b$, we say $\lambda$ is \textit{$b$-regular} if there is no index $ i$ such that $\lambda_i =\cdots= \lambda_{i+b-1}>0$. And for simplicity, we define the \textit{concatenation} of two finite positive integer sequences $\lambda$ and $\mu$ as the tuple $\lambda\oplus\mu = (\lambda_1,\ldots,\lambda_{l(\lambda)},\mu_1,\ldots,\mu_{l(\mu)})$.

We identify each partition with its corresponding \textit{Young diagram}. In this paper, except Section 2.3, we adopt the \textit{English convention} for the Young diagram: rotate the plane to orient the \textit{positive $x$-axis pointing south} and the \textit{positive $y$-axis pointing east}. Recall that the Young diagram consists of \textit{boxes}, which are unit squares parallel to the axis with vertices in integer coordinates. For the sake of notation, we identify a box and its southeast vertex by the same name $(i,j)$ where a box $(i,j)\in \lambda$ iff $j\le \lambda_i$.

The transpose (or conjugate) $\lambda^{\Tr}$ of a Young diagram $\lambda$ is given by: $$\{(i,j)\in\mathbb{N}\times\mathbb{N}: 1\le j, 1\le i \le \lambda_j\}.$$ 

A box $A=(i,j)\in\lambda$ is called a \textit{removable} box of $\lambda$ if $\lambda\setminus A\in\mathcal{P}_{n-1}$. A box $B\notin\lambda$ is called an \textit{addable} box of $\lambda$, if $\lambda\cup B\in\mathcal{P}_{n+1}$. Fix a positive integer $b$, the \textit{residue} of $A$ with respect to $b$, denoted by $\res_b A$, is defined to be the residue class $(j-i)$ mod $b$. %We will say a box is an \textit{$i$-box}, if its residue is $i$. 

Given a box $(i,j)\in \lambda$, the corresponding \textit{arm} $a_{i,j}=a_{i,j}(\lambda)$ is the set of boxes $(i,j')\in\lambda$ with $j<j'$. We use $a_{i,j}$ to denote either this set of boxes or the number of elements of the above set interchangeably. Similarly, the \textit{leg} $l_{i,j}=l_{i,j}(\lambda)$ is the set of boxes $(i',j)\in\lambda$ with $i<i'$. We use $l_{i,j}$ to denote either the above set or the number of elements of the above set interchangeably as well. Finally the \textit{hook} $H_{i,j}=H_{i,j}(\lambda)$ is the union of sets $\{(i,j)\}\cup a_{i,j}\cup l_{i,j}$. Again, the number of elements of the hook is also denoted by $H_{i,j}$ and is equal to $1+a_{i,j}+l_{i,j}$. The northeast-most (resp. southwest-most) box in $H_{i,j}$ is called the hand (resp. foot) box associated to $(i,j)$, denoted by $\mathfrak{h}_{i,j}=\mathfrak{h}_{i,j}(\lambda)=(i,j+a_{i,j}$) (resp. $\mathfrak{f}_{i,j}=\mathfrak{f}_{i,j}(\lambda)=(i+l_{i,j},j)$).

There are two special classes of hooks of particular interest:

\begin{Definition}\label{definition of shallow and steep hooks}
Given two positive integers $a<b$ and a partition $\lambda$. A hook $H_{i,j}$ in $\lambda$ is \textit{$(a,b)$-shallow} if it satisfies:

\begin{equation}\label{eq: shallow hook}
    \left(\frac{b}{a}-1\right)l_{i,j} < a_{i,j} +1.
\end{equation}
\\
Dually, a hook $H_{i,j}$ is \textit{$(a,b)$-steep} if it satisfies:

\begin{equation}\label{eq: steep hook}
    \left(\frac{b}{a}-1\right)a_{i,j} < l_{i,j} +1.
\end{equation}

\end{Definition}

\begin{Remark}\label{shallow-steep}
When $b\mid H_{i,j}$, $H_{i,j}$ is $(a,b)$-shallow iff $l_{i,j}\le ta-1$ and $a_{i,j}\ge t(b-a)$ for some $t\in \mathbb{N}_{>0}$; and similarly, $H_{i,j}$ is $(a,b)$-steep iff $a_{i,j}\le ta-1$ and $l_{i,j}\ge t(b-a)$ for some $t\in \mathbb{N}_{>0}$. In particular, the above inequalities hold when $t = H_{i,j}/b.$
\end{Remark}

\begin{Lemma}
\label{simultaneously shallow/steep}
A hook divisible by $b$ can be both $(a,b)$-shallow and $(a,b)$-steep only if $b<2a$.
\end{Lemma}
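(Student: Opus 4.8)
The plan is to prove the contrapositive: assume $b \geq 2a$ and show that no hook can be simultaneously $(a,b)$-shallow and $(a,b)$-steep. Suppose a hook $H_{i,j}$ satisfies both inequalities

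\begin{equation*}
\left(\frac{b}{a}-1\right)l_{i,j} < a_{i,j} + 1 \qquad \text{and} \qquad \left(\frac{b}{a}-1\right)a_{i,j} < l_{i,j} + 1.
\end{equation*}

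Writing $r = \frac{b}{a} - 1 = \frac{b-a}{a}$, these read $r\,l_{i,j} < a_{i,j} + 1$ and $r\,a_{i,j} < l_{i,j} + 1$. The assumption $b \geq 2a$ is equivalent to $r \geq 1$.

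First I would add the two inequalities to get $r(a_{i,j} + l_{i,j}) < a_{i,j} + l_{i,j} + 2$, i.e. $(r-1)(a_{i,j} + l_{i,j}) < 2$. Since $r \geq 1$, the left side is nonnegative, so this is only a genuine constraint when $r$ is close to $1$; I expect the main work to be in squeezing out a contradiction from the integrality of $a_{i,j}$ and $l_{i,j}$. The cleanest route is to multiply through by $a$ to clear denominators: the two conditions become $(b-a)l_{i,j} < a(a_{i,j}+1)$ and $(b-a)a_{i,j} < a(l_{i,j}+1)$, all quantities now integers. Summing gives $(b-a)(a_{i,j}+l_{i,j}) < a(a_{i,j}+l_{i,j}) + 2a$, hence $(b-2a)(a_{i,j}+l_{i,j}) < 2a$. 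When $b \geq 2a$ the coefficient $b-2a \geq 0$; the borderline case $b = 2a$ needs separate attention since it gives $0 < 2a$, which is no contradiction, so I would have to look more carefully there rather than relying only on the summed inequality.

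The step I expect to be the main obstacle is precisely the boundary case $b = 2a$ (equivalently $r = 1$), where summing the inequalities loses information. Here the individual conditions become $l_{i,j} < a_{i,j} + 1$ and $a_{i,j} < l_{i,j} + 1$, i.e. $l_{i,j} \leq a_{i,j}$ and $a_{i,j} \leq l_{i,j}$, forcing $a_{i,j} = l_{i,j}$; this is satisfiable, so a hook \emph{can} be both shallow and steep when $b = 2a$. I would therefore re-read the statement: the lemma asserts simultaneity is possible \emph{only if} $b < 2a$, so the $b = 2a$ case must in fact be excluded, which suggests the intended hooks carry the divisibility hypothesis $b \mid H_{i,j}$ and one should invoke Remark \ref{shallow-steep} rather than the bare inequalities. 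Using that remark, being both shallow and steep forces $l_{i,j} \leq ta - 1$ together with $l_{i,j} \geq s(b-a)$ and symmetrically $a_{i,j} \leq sa - 1$ with $a_{i,j} \geq t(b-a)$ for some positive integers $s,t$; chaining $t(b-a) \leq a_{i,j} \leq sa - 1$ and $s(b-a) \leq l_{i,j} \leq ta - 1$ and adding yields $(s+t)(b-a) \leq (s+t)a - 2$, i.e. $(s+t)(b-2a) \leq -2 < 0$, which forces $b < 2a$ since $s + t > 0$. This gives the contrapositive cleanly and locates the crux in correctly identifying which characterization of shallow/steep the lemma intends.

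I would present the argument in this last form, stating the hypothesis as the divisibility case via Remark \ref{shallow-steep}, deriving the two-sided bounds on $a_{i,j}$ and $l_{i,j}$, and concluding $b < 2a$ from $(s+t)(b-2a) \leq -2$. If instead the lemma is meant in full generality from the raw inequalities, I would flag that the boundary $b = 2a$ admits the balanced hook $a_{i,j} = l_{i,j}$ as a counterexample and adjust the statement accordingly.
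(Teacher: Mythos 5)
Your proposal is correct and ends up taking essentially the same route as the paper: the paper's own proof also invokes Remark \ref{shallow-steep} and chains $t(b-a)\le l_{i,j}\le ta-1$ to get $b<2a$, so your final argument (with possibly distinct parameters $s,t$, summed to give $(s+t)(b-2a)\le -2$) is just a slightly more careful version of the same idea, and is valid since $s,t>0$. Your diagnosis that the raw inequalities alone do not suffice and that the lemma implicitly carries the hypothesis $b\mid H_{i,j}$ is also accurate — the paper's proof silently assumes this by citing the remark — so your flag about the statement is well placed rather than a defect in your argument.
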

\begin{proof}
By the above remark, by taking $t = H_{i,j}/b$, we know $t(b-a)\leq a_{i,j}\leq ta-1$ and $t(b-a)\leq l_{i,j}\leq ta-1$ for some $t\in\mathbb{N}_{>0}$. Hence $t(b-a)\le ta-1$, and we conclude $b<2a$.
\end{proof}
%% dominance not needed? for the first part of the proof
Finally, for two partitions $\lambda$ and $\mu$ of the same size, the dominance order is defined as $\lambda\unlhd\mu$ if $\sum_{i=1}^k\lambda_i\leq\sum_{i=1}^k\mu_i$ is satisfied for all $k$. Note that $\lambda\unlhd\mu$ iff $\mu^{\Tr}\unlhd\lambda^{\Tr}$.

\subsection{Mullineux Transpose}
We abbreviate the composition of Mullineux involution and transpose as \textit{Mullineux transpose}. There are multiple definitions for Mullineux transpose, but here we will follow the approach of Bessenrodt, Olsson and Xu from \cite{bessenrodt1999properties}.

%First we define $\lambda^{\R}$ (resp. $\lambda^{\C}$) be the resulting partition from removing the largest row (resp. column) from $\lambda.$ More explicitly, for $\lambda=(\lambda_1,\ldots, \lambda_k)$, $\lambda^{\R}=(\lambda_2,\ldots, \lambda_k)$ and $\lambda^{\C}=(\lambda_1-1,\ldots, \lambda_k-1)$ (with zero parts removed if necessary). We also have $\lambda^{\R} = \lambda^{\Tr \C \Tr}.$

\begin{Definition}\label{definition of rim and b-rim}
The \textit{rim} of a partition $\lambda$ is the set of boxes $\{(i,j)\in\lambda \mid (i+1,j+1)\notin \lambda\}$. If  $\lambda$ is $b$-regular, we define its \textit{$b$-rim} to be the a subset of its rim obtained through the following procedure:

The $b\con$rim consists of the several pieces where each piece, \textit{except possibly the last one}, contains $b$ boxes. We choose the first $b$ boxes from the rim, beginning with the east-most box of the first row and moving contiguously southwestwards in the rim of $\lambda$. If the last box of this piece is chosen from the $i_0\con$th row of $\lambda$, then we choose the second piece of $b$ boxes beginning with the right-most box of the next row ${i_0+1}$. Continue this procedure until we reach the last piece ending in the last row.

We call a maximal set of contiguous boxes of the $b$-rim a \textit{segment}. When the number of boxes in this segment is a multiple $b$, it is a \textit{$b$-segment}. Otherwise, it is called a \textit{$b'$-segment}. By construction, every $\lambda$ has at most one $b'$-segment.

Finally, denote $\lambda^{\I_b}$ to be the partition obtained by removing the $b$-rim from $\lambda$.

%\begin{Remark}
%Note that there is a correspondence between consecutive boxes on the rim and hooks in $\lambda$. Given a hook, the corresponding boxes on the rim are those that lie between the hand and foot boxes. Given a set of consecutive boxes on the rim, the corresponding hook is determined by setting the hand node as the northeast-most box in the set and the foot node as the southwest-most box in the set.
%\end{Remark}

\end{Definition}

\begin{Definition}\label{definition of Mullineux + transpose}

Given $\lambda=(\lambda_1,...,\lambda_k)$ and $\lambda^{\I_b}=(\mu_1,...,\mu_k)$, where $k=l(\lambda)$ and some of the $\mu_i$'s at the end of $\lambda^{\I_b}$ are allowed to be zero, define $$\lambda^{\J_b}\coloneqq(\mu_1+1,...,\mu_{k-1}+1,\mu_k+\delta),$$ where $\phi(\lambda)=|\lambda|-|\lambda^{\I_b}|$ and
\[\delta=
\begin{cases}
0\text{ if } b\nmid \phi(\lambda)\\
1\text{ if } b\mid \phi(\lambda)
\end{cases}. \]

The set of boxes $\lambda\setminus\lambda^{\J_b}$ is called the \textit{truncated $b$-rim} of $\lambda$.
\end{Definition}

\begin{Definition}
\label{BOXmullineux}
For a $b\con$regular partition $\lambda$, the operator $\X_b$ is defined as $\lambda^{\X_b}\coloneqq(j_1,j_2,\ldots)$, where 
$$j_i=|\lambda^{\J_b^{i-1}}|-|\lambda^{\J_b^i}|.$$
The \textit{Mullineux map} $\M_b$ for a $b$-regular partition is defined to be $\lambda^{\M_b}=\lambda^{\X_b\Tr}$.

Recursively, we can write 
\begin{equation}\label{eq: M recursion}
    \lambda^{\M_b\Tr} = (|\lambda|-|\lambda^{\J_b}|)\oplus \lambda^{\J_b\M_b\Tr}.
\end{equation}

\end{Definition}

\begin{Remark}
In case $b$ is a prime number, Definition \ref{BOXmullineux} of the Mullineux map is a combinatorial algorithm of calculating the tensor product of an irreducible representation of the symmetric group with the sign representation in characteristic $b$ given in the Introduction. This result was conjectured by Mullineux in 1979 and later proved by Ford and Kleshchev in \cite{ford1997proof}.

There are two other equivalent combinatorial ways to define the Mullineux map, see \cite{ford1997proof,Dimakis2018} for details. It is easy to see Mullineux map is an involution from the representation-theoretic definition, but not so obvious from Definition \ref{BOXmullineux}. 
\end{Remark}

\begin{Example}\label{example28}
The $3$-rims of $(7,5,1,1)$ and $(7,2,1)$ are labelled by integers $1,2,3$ in Figure \ref{fig28}. The partition $(7,5,1,1)$ has two $3$-segments and one $3'$-segment, and $(7,2,1)$ has two $3$-segments and no $3'$-segment. Their truncated $3$-rims are shaded respectively in Figure \ref{fig28}. Thus, $(7,5,1,1)^{\J_3} = (5,3,1)$ and $(7,2,1)^{\J_3} = (5,1,1)$.
\end{Example}

\begin{center}
\begin{figure}[h]
\begin{tikzpicture}[scale=0.5]
\draw[pattern=north west lines, pattern color=black] (0,0) rectangle (1,1);
\draw[pattern=north west lines, pattern color=black] (3,2) rectangle (5,3);
\draw[pattern=north west lines, pattern color=black] (5,3) rectangle (7,4);

\draw (0,0) -- (0,4);
\draw (1,0) -- (1,4);
\draw (2,2) -- (2,4);
\draw (3,2) -- (3,4);
\draw (4,2) -- (4,4);
\draw (5,2) -- (5,4);
\draw (6,3) -- (6,4);
\draw (7,3) -- (7,4);
\draw (0,4) -- (7,4);
\draw (0,3) -- (7,3);
\draw (0,2) -- (5,2);
\draw (0,1) -- (1,1);
\draw (0,0) -- (1,0);

\foreach \x/\y/\m in {+0.5/+0.5/$2$,+0.5/+1.5/$1$,+2.5/+2.5/$3$,+3.5/+2.5/$2$,+4.5/+2.5/$1$,+4.5/+3.5/$3$,5.5/+3.5/$2$,6.5/+3.5/$1$} 
    \node at (\x,\y) {\m};

\draw[pattern=north west lines, pattern color=black] (13,2) rectangle (14,3);
\draw[pattern=north west lines, pattern color=black] (17,3) rectangle (19,4);

\draw (12,1) -- (12,4);
\draw (13,1) -- (13,4);
\draw (14,2) -- (14,4);
\draw (15,3) -- (15,4);
\draw (16,3) -- (16,4);
\draw (17,3) -- (17,4);
\draw (18,3) -- (18,4);
\draw (19,3) -- (19,4);
\draw (12,1) -- (13,1);
\draw (12,2) -- (14,2);
\draw (12,3) -- (19,3);
\draw (12,4) -- (19,4);

\foreach \x/\y/\m in {+12.5/+1.5/$3$,+12.5/+2.5/$2$,+13.5/+2.5/$1$,+16.5/+3.5/$3$,+17.5/+3.5/$2$,+18.5/+3.5/$1$} 
    \node at (\x,\y) {\m};

\end{tikzpicture}
\caption{The truncated $3$-rims for $(7,5,1,1)$ and $(7,2,1)$.}
\label{fig28}
\end{figure}
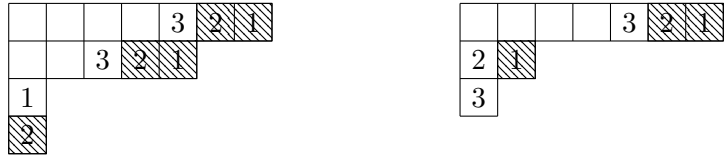
\end{center}

In the proof of the main theorem, we need to fully characterize the shape of the $b$-rim of certain partitions. The concept of rectangular decomposition allows for a simple description.

\begin{Definition}\label{definition of rectangular decomposition}
Given a partition $\lambda$, we label the boxes in its $b$-rim with positive integers from 1 to $N$ in order from northeast to southwest ($N$ is the number of boxes in the $b$-rim). Suppose $N=bk+r$ where $k\in \mathbb{N}$ and $0\leq r<b$. The \textit{$b$-rectangular decomposition} associated to the $b$-rim is the sequence of rectangles (of boxes) $r_1,\ldots, r_{k}$  such that each $r_i$ is the smallest rectangle containing the consecutive boxes labelled by $b(i-1)+1,\ldots,\min\{bi,bk+r\}$.

We let $r_i^x$ (resp. $r_i^y$) be the dimension of the rectangle $r_i$ in the north-south (resp. west-east) direction. Informally, we may refer to $r_i^x$ as the width of the rectangle and $r_i^y$ as the length of the rectangle.
\end{Definition}

\begin{Remark}
More specifically, the ``smallest" rectangle containing a set $S$ of boxes is the intersection of all rectangles parallel to the axes that contain $S$. So the smallest rectangle that contains $b$-consecutive boxes on the $b$-rim necessarily satisfies $r_i^x + r_i^y = b+1.$ In the context of Definition \ref{definition of rectangular decomposition}, for every $i$ except possibly the final one (when $\lambda$ contains a $b'$-segment), we have $r_i^x + r_i^y = b+1$. 
\end{Remark}

\begin{Example}
The $5$-rectangular decomposition of $(12,9,9,7,5,2,2,1)$ is the sequence of rectangles $r_1$, $r_2$, $r_3$, $r_4$ as shown in Figure \ref{fig210}. Note that $r_i^x=2$ for $i=1,2,3,4$, and $r_j^y=4$ for $j=1,2,3$ and $r_4^y=2$. Moreover, boxes from 1 to 10 form a $b$-segment, and boxes from 11 to 18 form a $b'$-segment. Also note that by construction, the rectangles may have at most one overlap in the columns, but they occupy all rows of $\lambda$ without overlapping.
\begin{center}
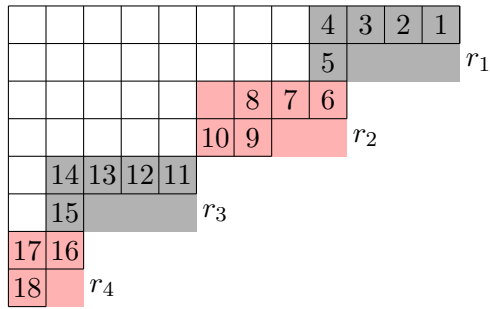
\begin{figure}[h]
\begin{tikzpicture}[scale=0.5]
\fill[black!30!white] (8,7) rectangle (12,9);
\fill[red!30!white] (5,5) rectangle (9,7);
\fill[black!30!white] (1,3) rectangle (5,5);
\fill[red!30!white] (0,1) rectangle (2,3);
\draw (0,1) -- (0,9);
\draw (1,1) -- (1,9);
\draw (2,2) -- (2,9);
\draw (3,4) -- (3,9);
\draw (4,4) -- (4,9);
\draw (5,4) -- (5,9);
\draw (6,5) -- (6,9);
\draw (7,5) -- (7,9);
\draw (8,6) -- (8,9);
\draw (9,6) -- (9,9);
\draw (10,8) -- (10,9);
\draw (11,8) -- (11,9);
\draw (12,8) -- (12,9);
\draw (12,8) -- (12,9);
\draw (0,1) -- (1,1);
\draw (0,2) -- (2,2);
\draw (0,3) -- (2,3);
\draw (0,4) -- (5,4);
\draw (0,5) -- (7,5);
\draw (0,6) -- (9,6);
\draw (0,7) -- (9,7);
\draw (0,8) -- (12,8);
\draw (0,9) -- (12,9);
\foreach \x/\y/\m in {+12.5/+7.5/$r_1$,+9.5/+5.5/$r_2$,+5.5/+3.5/$r_3$,+2.5/+1.5/$r_4$, 11.5/8.5/$1$, 10.5/8.5/$2$, 9.5/8.5/$3$, 8.5/8.5/$4$, 8.5/7.5/$5$, 8.5/6.5/$6$,7.5/6.5/$7$, 6.5/6.5/$8$, 6.5/5.5/$9$, 5.5/5.5/$10$,4.5/4.5/$11$,3.5/4.5/$12$, 2.5/4.5/$13$,1.5/4.5/$14$, 1.5/3.5/$15$, 1.5/2.5/$16$,0.5/2.5/$17$, 0.5/1.5/$18$}
\node at (\x,\y) {\m};

\end{tikzpicture}

\caption{The $5$-rectangular decomposition of $(12,9,9,7,5,2,2,1)$.}
\label{fig210}
\end{figure}
\end{center}
\end{Example}

\subsection{Regularization and Column Regularization}
At the beginning of Section 2, we embed the Young diagram of a partition into the coordinate plane and refer to a box and its southeast vertex by the same name $(i,j)$. From now on, any unit square with vertices in $\mathbb{Z}^2$ will also be called a \textit{box} and identified with its southeast vertex, i.e., when we refer to a box $(i,j)\in \mathbb{Z}^2$, we mean the unit square whose southeast vertex has coordinates $(i,j)$.

\begin{Definition}
\label{ladders}
Given two nonnegative integers $a<b$ and a partition $\lambda$, we define the following:
\begin{enumerate}
\item For any box $A=(i_0,j_0)\in\mathbb{Z}^2$, the \textit{ladder} passing through $A$ is defined to be the finite collection of boxes:
$$\ell_A=\ell_{i_0,j_0}\coloneqq\{(i,j)\in\mathbb{N}_{>0}^2:\exists\ t\in \mathbb{Z}, i = i_0-ta, j = j_0 + t(b-a)\}.$$
\item We define the \textit{column regularization} $\lambda^{\Cr_{a,b}}$ of $\lambda$ through the following procedure. For each ladder $\ell_{A}:A\in\mathbb{Z}^2$, if $\lambda\cap \ell_A\neq\emptyset$, slide the boxes in the intersection southwards along the ladder $\ell_A$ to the $|\lambda\cap \ell_A|$ south-most boxes of $\ell_A$. The resulting set of boxes is $\lambda^{\Cr_{a,b}}$, which is not necessarily a partition.
\item For any box $A=(i_0,j_0)\in\mathbb{Z}^2$, the \textit{dual ladder} passing through $A$ is defined to be:
$$\hat{\ell}_A=\hat{\ell}_{i_0,j_0}\coloneqq\{(i,j)\in\mathbb{N}_{>0}^2:\exists\ t\in \mathbb{Z}, i = i_0-t(b-a), j = j_0 + ta\}.$$
\item The \textit{regularization} $\lambda^{\Reg_{a,b}}$ of $\lambda$ is defined to be $\lambda^{\Tr\Cr_{a,b}\Tr}$. Equivalently, $\lambda^{\Reg_{a,b}}$ is the resulting partition after sliding boxes in each nonempty $\lambda\cap \hat{\ell}_A:A\in\mathbb{Z}^2$ northwards along the dual ladders $\hat{\ell}_A$ to the $|\lambda\cap \hat{\ell}_A|$ north-most boxes of $\hat{\ell_A}$.\end{enumerate}
We say that $\lambda$ is \textit{$\Cr_{a,b}$-valid} (resp. \textit{$\Reg_{a,b}$-valid}) if $\lambda^{\Cr_{a,b}}\in\mathcal{P}$ (resp. $\lambda^{\reg_{a,b}}\in\mathcal{P}$). 

\end{Definition}

\begin{Remark}
\begin{enumerate}
\item Note that though $A=(i_0,j_0)$ might not be in $\lambda$ or even not in the first quadrant, $\ell_A$ (resp. $\hat{\ell}_A$) only denotes some of the boxes in the first quadrant on the ``underlying line": $y-j_0+\frac{b-a}{a}(x-i_0)=0$ (resp. $y-j_0+\frac{a}{b-a}(x-i_0)=0$). 
\item Different subscripts can be used to indicate the same (resp. dual) ladder, for example when $a=2,b=5$, $\ell_{1,9}=\ell_{-1,12}=\ell_{3,6}$. But for the sake of clarity, in all the figures in this paper, we will only draw out the ``underlying line" when referring to a (resp. dual) ladder.
\item Only when $a$ and $b$ are co-prime, the (resp. dual) ladder coincide with the set of all positive integer points (boxes) on its ``underlying line."
\item All the boxes on a (resp. dual) ladder have the same $b$-residue, we say this residue to be the residue of the (resp. dual) ladder, denoted by $\res_b\ell$ (resp. $\res_b\hat\ell$).
\end{enumerate}
\end{Remark}

\begin{Example}\label{example colreg}
The partition $(3,2,2,1)$ is $\Cr_{2,3}$-valid and $(3,2,2,1)^{\Cr_{2,3}}=(2,2,2,1,1)$, but $\Cr_{2,3}$ maps $(3,2,2)$ to $(2,1,2,1,1) \notin \mathcal{P}$, hence $(3,2,2)$ is not $\Cr_{2,3}$-valid, which is shown in Figure \ref{fig214}.
\end{Example}
\begin{center}
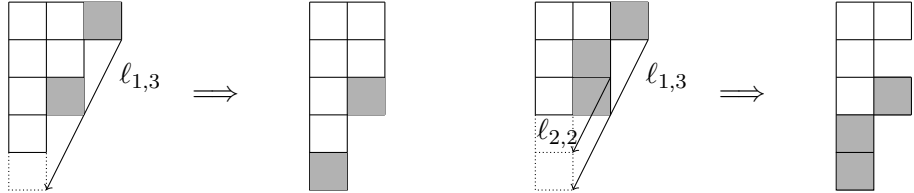
\begin{figure}[h]
\begin{tikzpicture}[scale=0.5]
\draw (1,0) -- (0,0) -- (0,4);
\draw (1,0) -- (1,4);
\draw (2,1) -- (2,4);
\draw (3,3) -- (3,4);
\draw (0,1) -- (2,1);
\draw (0,2) -- (2,2);
\draw (0,3) -- (3,3);
\draw (0,4) -- (3,4);
\draw[densely dotted] (0,0) -- (0,-1) -- (1,-1) -- (1,0);
\draw[->] (3,3) -- (1,-1);
\draw (9,0) -- (8,0) -- (8,4);
\draw (9,0) -- (9,4);
\draw (10,1) -- (10,4);
\draw (8,1) -- (10,1);
\draw (8,2) -- (10,2);
\draw (8,3) -- (10,3);
\draw (8,4) -- (10,4);
\draw (8,0) -- (8,-1) -- (9,-1) -- (9,0);
\fill[black!30!white] (1.02,1.02) rectangle (1.98,1.98);
\fill[black!30!white] (2.02,3.02) rectangle (2.98,3.98);
\fill[black!30!white] (9.02,1.02) rectangle (9.98,1.98);
\fill[black!30!white] (8.02,-0.98) rectangle (8.98,-0.02);
\foreach \x/\y/\m in {5.5/1.5/$\Longrightarrow$,19.5/1.5/$\Longrightarrow$,3.5/2/$\ell_{1,3}$,17.5/2/$\ell_{1,3}$,14.6/0.5/$\ell_{2,2}$} 
    \node at (\x,\y) {\m};

\draw (14,1) -- (14,4);
\draw (15,1) -- (15,4);
\draw (16,1) -- (16,4);
\draw (17,3) -- (17,4);
\draw (14,1) -- (16,1);
\draw (14,2) -- (16,2);
\draw (14,3) -- (17,3);
\draw (14,4) -- (17,4);
\draw[densely dotted] (14,1) -- (14,-1) -- (15,-1) -- (15,1);
\draw[densely dotted] (14,0) -- (15,0);
\fill[black!30!white] (15.02,1.98) rectangle (15.98,1.02);
\fill[black!30!white] (16.02,3.98) rectangle (16.98,3.02);
\fill[black!30!white] (23.02,0.98) rectangle (23.98,2.02);
\fill[black!30!white] (22.02,-1.02) rectangle (22.98,0.02);
\fill[black!30!white] (15.02,2.98) rectangle (15.98,2.02);
\fill[black!30!white] (22.02,-0.02) rectangle (22.98,1.02);
\draw[->] (17,3) -- (15,-1);
\draw[->,black] (16,2) -- (15,0);

\draw (23,0) -- (22,0) -- (22,4);
\draw (23,0) -- (23,4);
\draw (24,1) -- (24,2);
\draw (24,3) -- (24,4);
\draw (22,1) -- (24,1);
\draw (22,2) -- (24,2);
\draw (22,3) -- (24,3);
\draw (22,4) -- (24,4);
\draw (22,0) -- (22,-1) -- (23,-1) -- (23,0);

\end{tikzpicture}
\caption{An example of a $\Cr_{2,3}$-valid partition $(3,2,2,1)$ and an $\Cr_{2,3}$-invalid one $(3,2,2)$.}
\label{fig214}
\end{figure}
\end{center}

By definition, $\lambda$ is $\Reg_{a,b}$-valid exactly when $\lambda^{\Tr}$ is $\Cr_{a,b}$-valid, so we only specify the criteria for $\lambda$ being $\Cr_{a,b}$-valid. 

\begin{Lemma}\label{lemma for colreg valid}
Given $\lambda\in\mathcal{P}$ and positive integers $a<b$, $\lambda$ is $\Cr_{a,b}$-valid if and only if for all $y\in[1,\lambda_1]$, either $\ell_{1,y}\subset\lambda$, or there exists a box $(i,j)\in \lambda\cap \ell_{0,y}$ such that $(i+1,j)\notin\lambda$. In the first case where $\ell_{1,y}\subset\lambda$, we say that the ladder $\ell_{1,y}$ is ``full".
\end{Lemma}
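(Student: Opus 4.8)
The plan is to set $\mu:=\lambda^{\Cr_{a,b}}$ and to reformulate $\Cr_{a,b}$-validity as two independent closure conditions. Recall that a set $S\subset\mathbb{N}_{>0}^2$ is the diagram of a partition iff it is closed under decreasing either coordinate; equivalently, iff every row $\{j:(i,j)\in S\}$ is a left-justified interval $[1,r_i]$ (call this \emph{left-closed}) and every column $\{i:(i,j)\in S\}$ is a top-justified interval $[1,h_j]$ (\emph{up-closed}) --- and left-closed together with up-closed forces $r_i\ge r_{i+1}$, hence a genuine partition. So I would prove: (i) $\mu$ is always left-closed, and (ii) $\mu$ is up-closed iff the displayed condition holds. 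The two technical facts I would use repeatedly are the ladder-level adjacency relations: the ladder through $(i+1,j)$ is the image of the ladder through $(i,j)$ under the downward shift $(x,y)\mapsto(x+1,y)$, and the ladder through $(i,j+1)$ is the image under the rightward shift $(x,y)\mapsto(x,y+1)$. These hold at the level of \emph{ladders} (which are $(a,b-a)$-progressions), and must not be confused with the coarser underneath line, which agrees with the ladder only when $\gcd(a,b)=1$. Note also that, since $a\ge 1$, the top box of $\ell_{1,y}$ is $(1,y)$ itself.

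For (i), fix $(i,j+1)\in\mu$ and compare the ladder $\ell$ through $(i,j)$ with the ladder $\ell^{r}$ through $(i,j+1)$. By the rightward shift, $\ell^r$ is the image of $\ell$, together with one extra bottom cell in column $1$ precisely when $(b-a)\mid j$. Since $\lambda$ is left-closed, $(x,y+1)\in\lambda\Rightarrow(x,y)\in\lambda$, so the boxes of $\lambda$ on $\ell^r$ inject into those on $\ell$; hence $|\lambda\cap\ell^r|\le|\lambda\cap\ell|+\varepsilon$, where $\varepsilon\in\{0,1\}$ indicates the extra cell. Tracking how that extra cell shifts the rank-from-the-bottom of $(i,j+1)$, this inequality is exactly what forces $(i,j)$ to lie among the bottom $|\lambda\cap\ell|$ cells of $\ell$, i.e. $(i,j)\in\mu$. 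Thus $\mu$ is left-closed unconditionally, and no horizontal defect can ever occur.

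For (ii) I would study a vertical gap, i.e. $(i,j)\notin\mu$ with $(i+1,j)\in\mu$, through the vertically adjacent pair $\ell^{\mathrm{up}}$ (through $(i,j)$) and $\ell^{\mathrm{low}}$ (through $(i+1,j)$), related by the downward shift. The decisive dichotomy is whether $\ell^{\mathrm{low}}$ acquires an extra top cell in row $1$; this happens exactly when $\ell^{\mathrm{up}}$ tops out in row $a$, equivalently when the pair is an $\ell_{0,y}/\ell_{1,y}$ junction, so that $\ell^{\mathrm{low}}=\ell_{1,y}$, $\ell^{\mathrm{up}}=\ell_{0,y}$. If the junction is \emph{not} of this type, the downward shift is a bijection $\ell^{\mathrm{up}}\leftrightarrow\ell^{\mathrm{low}}$, and up-closure of $\lambda$ (namely $(x+1,y)\in\lambda\Rightarrow(x,y)\in\lambda$) yields $|\lambda\cap\ell^{\mathrm{low}}|\le|\lambda\cap\ell^{\mathrm{up}}|$, which rules out a gap there. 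Consequently every gap sits at an $\ell_{0,y}/\ell_{1,y}$ junction, and only these need be examined.

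At such a junction, writing $n_0=|\lambda\cap\ell_{0,y}|$ and $n_1=|\lambda\cap\ell_{1,y}|$, the shift description shows the two sliding stacks are aligned off by one row, so a short computation gives: a gap exists iff $n_1>n_0$; up-closure of $\lambda$ always gives $n_1\le n_0+1$; and (since $y\le\lambda_1$ forces $(1,y)\in\lambda$, supplying the top cell of $\ell_{1,y}$) equality $n_1=n_0+1$ holds iff no box of $\lambda$ on $\ell_{0,y}$ has its downward neighbour outside $\lambda$, i.e. iff $\ell_{0,y}$ does \emph{not} gain a box. The remaining point is to separate a real gap from the harmless top cell: when $\ell_{1,y}$ is full, up-closure upgrades this to $\ell_{0,y}$ being full as well, so $n_1=n_0+1$ then records only the cell $(1,y)$ and no actual gap occurs. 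Assembling these, a genuine vertical gap sits at $y$ iff $\ell_{1,y}$ is not full \emph{and} $\ell_{0,y}$ does not gain a box, which is exactly the negation of the condition at $y$; combined with (i) and the gap-localization of the previous paragraph, this gives the equivalence. I expect the main obstacle to be precisely this final junction bookkeeping --- obtaining ``$n_1\le n_0+1$ with equality iff no gain'' and the ``full $\Rightarrow$ full'' upgrade at the level of ladders rather than lines, so that the $\gcd(a,b)>1$ caveat from the Remark causes no trouble.
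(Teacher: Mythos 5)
Your proof is correct and follows essentially the same route as the paper's: split partition-hood of $\lambda^{\Cr_{a,b}}$ into row- and column-closure, compare the counts $\#(\lambda\cap\ell)$ on horizontally and vertically adjacent ladders to show the row condition is automatic and the column condition can only fail at an $\ell_{0,y}/\ell_{1,y}$ junction, and then read off the full/gain dichotomy there. Your write-up is in fact more careful than the paper's (notably the explicit bottom-alignment of adjacent ladders and the reduction of all vertical defects to row-$1$ junctions), but it is the same argument.
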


\begin{proof}
$\lambda^{\Cr_{a,b}}\in \mathcal{P}$ is equivalent to the following:

For every $(i,j)\in \lambda^{\Cr_{a,b}}$, 
\begin{enumerate}
\item if $j\ge 2$, $(i,j-1)\in\lambda^{\Cr_{a,b}}$;
\item if $i\ge 2$, $(i-1,j)\in\lambda^{\Cr_{a,b}}$.
\end{enumerate}

(1) is always true for any $\lambda$. In fact for every $(x,y)\in \lambda$, $y\ge2$, consider the two ladders $\ell_{x,y}$ and $\ell_{x,y-1}$. If the ladder $\ell_{x,y}$ contains a box of the form $(i, 1)$, then $|\ell_{x,y-1}|=|\ell_{x,y}|-1$ and $|(\ell_{x,y-1}\cap\lambda)|\geq| (\ell_{x,y}\cap\lambda)|-1$ since (1) is satisfied by $\lambda$. If $\ell_{x,y}$ does not contain a box of the form $(i,1)$, then $|\ell_{x,y-1}|=|\ell_{x,y}|$ and $|(\ell_{x,y-1}\cap\lambda)|\geq| (\ell_{x,y}\cap\lambda)|$. Because $\Cr_{a,b}$ fixes the number of boxes on each ladder, and moves the boxes on each ladder to the south-most positions, for any $(i,j)\in \lambda^{\Cr_{a,b}}$ and $j\geq2$, $(i,j-1)\in\lambda^{\Cr_{a,b}}$.

To satisfy condition (2), for every $(x,y)\in \lambda$, $x\ge2$, we consider $\ell_{x,y}$ and $\ell_{x-1,y}$. Similar as case (1), if $\ell_{x,y}$ contains a box of the form $(1,j)$, then $|\ell_{x-1,y}|=|\ell_{x,y}|-1$ and $|(\ell_{x-1,y}\cap\lambda)|\geq| (\ell_{x,y}\cap\lambda)|-1$ since when $i\ge 2$ for each $(i,j)\in\lambda$, $(i-1,j)\in \lambda$. If $\ell_{x,y}$ does not contain a box of the form $(1,j)$, then $|\ell_{x-1,y}|=|\ell_{x,y}|$ and $|(\ell_{x-1,y}\cap\lambda)|\geq| (\ell_{x,y}\cap\lambda)|$. Hence $\lambda^{\Cr_{a,b}}\notin \mathcal{P}$ occurs only in the former case where $|(\ell_{x-1,y}\cap\lambda)|= |(\ell_{x,y}\cap\lambda)|-1$, $\ell_{x,y}$ contains through a box of the form $(1,j)$, and $\ell_{x,y}\not\subset\lambda$. In this case, without loss of generality, we suppose $x=1$ and $y\in [1,\lambda_1]$. Then the only situation when $\lambda^{\Cr_{a,b}}\notin\mathcal{P}$ is when $\ell_{1,y}\not\subset\lambda$, and pairs of boxes $(i,j)\in \ell_{1,y}$, $i\geq2$ and $(i-1,y)\in\ell_{0,y}$ are either both present in $\lambda$ or both not in $\lambda$ simultaneously. Under these conditions, the box $(1,y)\in \ell_{1,y}$ slides to a position where the box immediately north to it is not in $\lambda^{\Cr_{a,b}}$.
\end{proof}

\begin{Remark}
Referring to Example \ref{example colreg}, the box $(3,2)\in\ell_{0,3}\cap(3,2,2)^{\Cr_{2,3}}$ while $(2,2)\notin(3,2,2)^{\Cr_{2,3}}.$ In fact, in $(3,2,2)$, there is no box $(i,j)\in \ell_{0,3}\cap\lambda$ where $(i+1,j)\notin\lambda,$ so it is not $\Cr_{a,b}$-valid. 
\end{Remark}

It is important to note the following proposition, which in part illustrates the subtleties when a second parameter is added. In previous works, the notions of $\Reg$-valid and $\Cr$-valid were irrelevant since only one parameter was used. In fact, for any value of $b$ and any partition $\lambda$, the maps $\Reg_{b}$ and $\Cr_{b}$ would be well-defined operators on $\lambda.$

\begin{Proposition}
Every partition $\lambda$ is $\Reg_{1,b}$-valid and $\Cr_{1,b}$-valid for all $b$.
\end{Proposition}

\begin{proof}
To show that $\lambda$ is $\Cr_{1,b}$-valid, we directly apply Lemma \ref{lemma for colreg valid}. For all $y\in [1,\lambda_1]$, either $\ell_{1,y}\subset \lambda$ or there exists a box $(i+1,j)$ furthest north along $\ell_{1,y}$ such that $(i+1,j)\notin \lambda$. Then the box $(i,j + b-1)$, which is the box immediately northeast of $(i+1,j)$ along the ladder $\ell_{1,y}$, must be in $\lambda$. Therefore, $(i,j)\in\lambda\cap \ell_{0,y}$ while $(i+1,j)\notin \lambda$, so the condition in Lemma \ref{lemma for colreg valid} is satisfied.

We apply the same argument to all ladders $\ell_{1,y}$ for $y\in [1,\lambda_1]$ and conclude that $\lambda$ is $\Cr_{1,b}$-valid. By applying the proof to $\lambda^{\Tr},$ we see that $\lambda$ is $\Reg_{1,b}$-valid as well.
\end{proof}

\begin{Definition}\label{definition: b-regular and ab-regular}
Given positive integers $a<b$, a partition $\lambda$ is \textit{$(a,b)$-regular} if $\lambda^{\Reg_{a,b}}=\lambda.$ 
\end{Definition}
\begin{Remark}
$(1,b)$-regular is equivalent to the notion of $b$-regular. Also $(a,b)$-regular implies $b$-regular for any positive integer $a$ with $a<b$.
\end{Remark}

For positive integers $a<b$, we wish to define an intermediate operator called the \textit{column semi-regularization} $\Sr_{a,b}$ to compute $\Cr_{a,b}$ in a row-by-row fashion (parallel to that of the Mullineux transpose as in Equation \eqref{eq: M recursion}): $\lambda^{\Cr_{a,b}}_i = |\lambda^{\Sr_{a,b}^{i-1}}|-|\lambda^{\Sr_{a,b}^{i}}|$, or recursively, 

\begin{equation}\label{eq: Cr recursion}
    \lambda^{\Cr_{a,b}} = (|\lambda|-|\lambda^{\Sr_{a,b}}|)\oplus \lambda^{\Sr_{a,b}\Cr_{a,b}}.
\end{equation}
The only way to do this is the following: 
\begin{Definition}
Given a $\Cr_{a,b}$-valid partition $\lambda$, the \textit{column semi-regularization} $\lambda^{\Sr_{a,b}}$ of $\lambda$ is defined by the following procedure. For all $(1,y)\in\lambda$, if $\ell_{1,y}\not\subset\lambda$, slide $(1,y)$ to the north-most position $(i,j)$ in $\ell_{1,y}\setminus\lambda$ such that $(i-1,j)\in\lambda$. Then, we remove what remains of the first row, and the resulting partition is defined to be $\lambda^{\Sr_{a,b}}$.
\end{Definition}

It is clear that $\Sr_{a,b}$ is well-defined by Lemma \ref{lemma for colreg valid}, and $\lambda^{\Sr_{a,b}}$ is $\Cr_{a,b}$-valid if $\lambda$ is. Note that we only perform $\Sr_{a,b}$ to $\Cr_{a,b}$-valid partitions.

\begin{Example}
In the diagram shown in Figure \ref{fig220}, the boxes $(1,12)$ and $(1,13)$ shift under $\Sr_{2,5}$ to $(3,10)$ and $(5,6)$ respectively. We have $$(13,10,9,7,5,2,2,1)^{\Sr_{2,5}} = (10,10,7,6,2,2,1).$$
\end{Example}

\begin{center}
\begin{figure}[h]
\begin{tikzpicture}[scale=0.5]

\draw[densely dotted] (9,7) -- (10,7) -- (10,6) -- (9,6);
\draw[densely dotted] (5,5) -- (6,5) -- (6,4) -- (5,4);
\fill[black!30!white] (11.02,9.02) rectangle (12.98,7.98);
\draw[->] (13,8) -- (10,6);
\draw[->,black] (12,8) -- (6,4);
\draw[pattern=north west lines, pattern color=black] (0,9) rectangle (11,8);

\draw (0,1) -- (0,9);
\draw (1,1) -- (1,9);
\draw (2,2) -- (2,9);
\draw (3,4) -- (3,9);
\draw (4,4) -- (4,9);
\draw (5,4) -- (5,9);
\draw (6,5) -- (6,9);
\draw (7,5) -- (7,9);
\draw (8,6) -- (8,9);
\draw (9,6) -- (9,9);
\draw (10,7) -- (10,9);
\draw (11,8) -- (11,9);
\draw (12,8) -- (12,9);
\draw (13,8) -- (13,9);
\draw (0,1) -- (1,1);
\draw (0,2) -- (2,2);
\draw (0,3) -- (2,3);
\draw (0,4) -- (5,4);
\draw (0,5) -- (7,5);
\draw (0,6) -- (9,6);
\draw (0,7) -- (10,7);
\draw (0,8) -- (13,8);
\draw (0,9) -- (13,9);
\foreach \x/\y/\m in {13/7/$\ell_{1,13}$,9/5/$\ell_{1,12}$} 
    \node at (\x,\y) {\m};

\end{tikzpicture}
\caption{An example of column semi-regularization ${\Sr_{2,5}} $: \\$(13,10,9,7,5,2,2,1)^{\Sr_{2,5}} = (10,10,7,6,2,2,1).$}
\label{fig220}
\end{figure}
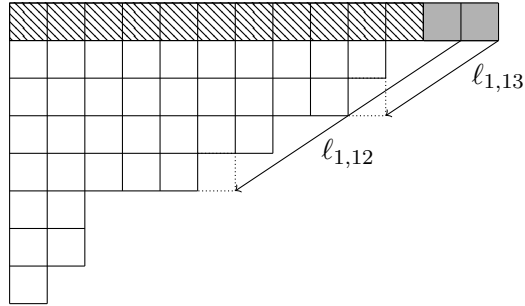
\end{center}

\subsection{Cores and Quotients} Cores and quotients are crucial concepts of partitions, and many attempts have been made to understand the Mullineux map in terms of cores and quotients. We first recall the definition of cores and refer to \cite{macdonald1998symmetric} for that of quotients and their basic properties.

\begin{Definition}
\label{haimancore}
The \textit{$b\con$core} $\Core_b(\lambda)$ of any partition $\lambda$ is the partition that remains after removing as many $b\con$ribbons in succession as possible. It is well-known that the result is independent of the choice of removals. If $\lambda=\Core_b(\lambda)$, then $\lambda$ itself is called a \textit{$b$-core}.
\end{Definition}

%\begin{Definition}
%\label{haimanquotient}
%Given a partition $\lambda$, for each residue class $k$, the collection of boxes $A\in\lambda$ satisfying $b\mid H_A$, $\res_b\mathfrak{h}_A=k$, $\res_b\mathfrak{f}_A=k+1$ forms an ``exploded" copy of a partition which we denote as $\lambda^{(k)}$. The \textit{$b$-quotient} of a partition $\lambda$ is defined to be the $b\con$tuple of partitions, $\Quot_b(\lambda) =(\lambda^{(0)},\lambda^{(1)},...,\lambda^{(b-1)})$. The \textit{$b$-weight} of $\lambda$ is defined to be $|\lambda|_b=\sum_{k=0}^{b-1}|\lambda^{(k)}|$.
%\end{Definition}

\begin{Definition}
The $b$-content of the partition $\lambda$ is a tuple $(c_0,\ldots,c_{b-1})$ where $c_i$ is the number of boxes in $\lambda$ with residue $i$.
\end{Definition}

\begin{Proposition}[\cite{macdonald1998symmetric}]
The $b$-content determines the $b$-core of a partition.
\end{Proposition}

Since the residue of boxes on each ladder is the same, the $b$-content is fixed when boxes move within the same ladders. Thus, $b$-core is an invariant under the (column) regularization process, and we have the following result:
\begin{Lemma}
\label{reg preserves core}
For a $\Reg_{a,b}$-valid (resp. $\Cr_{a,b}$-valid) partition $\lambda$, we have
$$\Core_b(\lambda^{\Reg_{a,b}})=\Core_b(\lambda)\quad(\text{resp. }\Core_b(\lambda^{\Cr_{a,b}})=\Core_b(\lambda))$$
In particular, we have $|\lambda^{\Reg_{a,b}}|_b=|\lambda|_b$ (resp. $|\lambda^{\Cr_{a,b}}|_b=|\lambda|_b$).
\end{Lemma}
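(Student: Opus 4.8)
The plan is to reduce everything to a single invariant: for a partition $\mu$ and a residue $r\in\{0,\dots,b-1\}$, let $n_r(\mu)$ denote the number of boxes of $\mu$ of $b$-residue $r$. First I would observe that $\Cr_{a,b}$ leaves the whole vector $(n_0,\dots,n_{b-1})$ unchanged. Indeed, $\Cr_{a,b}$ only rearranges the boxes of $\lambda$ lying on each ladder $\ell$, sliding them to the bottom-most positions of $\ell$; in particular $\#(\lambda\cap\ell)=\#(\lambda^{\Cr_{a,b}}\cap\ell)$ for every ladder. Since, by the Remark following Definition \ref{ladders}, every box on a fixed ladder has the same $b$-residue, summing over all ladders of residue $r$ gives $n_r(\lambda^{\Cr_{a,b}})=n_r(\lambda)$ for every $r$, and of course $|\lambda^{\Cr_{a,b}}|=|\lambda|$.

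Next I would record how the residue vector interacts with $b$-ribbon removal. A $b$-ribbon occupies $b$ consecutive diagonals $\{(i,j)\mid j-i=c\},\dots,\{(i,j)\mid j-i=c+b-1\}$, so it contains exactly one box of each $b$-residue; hence removing a $b$-ribbon decreases every $n_r$ by exactly $1$. Iterating down to the core, for any partition $\mu$ we obtain $n_r(\mu)=n_r(\Core_b(\mu))+|\mu|_b$ for all $r$, where $|\mu|_b$ is the weight. Consequently the \emph{reduced} residue vector $(n_r(\mu)-n_s(\mu))_{r,s}$ depends only on $\Core_b(\mu)$.

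The crux is the converse: that a $b$-core is completely determined by its reduced residue vector. Here I would pass to the $b$-abacus of Section 2.3. On the abacus of a $b$-core all black beads are left-justified on each runner, so the core is determined by the bead-count vector of the $b$ runners; and this vector is an invertible linear function of the residue vector (equivalently, this is the classical bijection between $b$-cores and the root lattice of type $A_{b-1}$). I expect this injectivity to be the main obstacle, since it requires translating runner charges into residue counts through the abacus bookkeeping of Section 2.3; everything else is formal.

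Granting this, the proof finishes quickly. By the first paragraph $\lambda$ and $\lambda^{\Cr_{a,b}}$ (which is a partition by the $\Cr_{a,b}$-validity hypothesis) have identical residue vectors, hence identical reduced residue vectors, hence, by the crux, equal $b$-cores: $\Core_b(\lambda^{\Cr_{a,b}})=\Core_b(\lambda)$. Since moreover $|\lambda^{\Cr_{a,b}}|=|\lambda|$, the identity $|\mu|=|\Core_b(\mu)|+b|\mu|_b$ forces $|\lambda^{\Cr_{a,b}}|_b=|\lambda|_b$. Finally the $\Reg_{a,b}$ statement follows from the $\Cr_{a,b}$ statement by transposing: $\Reg_{a,b}=\Tr\,\Cr_{a,b}\,\Tr$, and since the transpose of a $b$-ribbon is again a $b$-ribbon, taking transpose commutes with taking cores, i.e. $\Core_b(\mu^{\Tr})=\Core_b(\mu)^{\Tr}$. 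Applying the already-established $\Cr_{a,b}$ case to $\lambda^{\Tr}$ (which is $\Cr_{a,b}$-valid exactly because $\lambda$ is $\Reg_{a,b}$-valid) then yields $\Core_b(\lambda^{\Reg_{a,b}})=\Core_b(\lambda)$ and $|\lambda^{\Reg_{a,b}}|_b=|\lambda|_b$ at once.
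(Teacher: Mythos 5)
Your argument is correct, and it packages the same underlying abacus mechanics in a genuinely different way from the paper. Both proofs begin from the observation that $\Cr_{a,b}$ only permutes boxes within ladders and that every box on a fixed ladder has the same $b$-residue. The paper then argues locally: it passes from $\lambda$ to $\lambda\cap\lambda^{\Cr_{a,b}}$ by removing boxes one at a time and then re-adds them in their new positions, notes that removing or adding a box of residue $r$ swaps a single black bead between runners $R_r$ and $R_{r+1}$, and concludes that since the removed and added residue multisets coincide, the per-runner bead counts --- and hence the core --- are unchanged. You argue globally, via the classical criterion that a $b$-core is determined by its residue content modulo constant vectors (your ``crux'', essentially the combinatorial Nakayama statement), supplemented by the ribbon count $n_r(\mu)=n_r(\Core_b(\mu))+|\mu|_b$, which the paper never needs. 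Your route buys a cleaner, movement-independent statement at the cost of having to prove the injectivity of the map from cores to reduced residue vectors; note that your sketch of that injectivity is completed by precisely the bead-swap observation the paper uses, since adding a box of residue $r$ changes the runner-charge vector by $e_r-e_{r+1}$, so the charges are the image of the residue vector under a linear map whose kernel is the constant vectors. (Calling the charge vector an ``invertible linear function of the residue vector'' is therefore slightly overstated --- the map has a kernel --- but it is injective on reduced vectors, which is all you use.) The transposition step deducing the $\Reg_{a,b}$ case and the weight identity at the end are both fine.
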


\begin{Remark}
Mullineux transpose also fixes the core of a $b$-regular partition, as shown in \cite{ford1997proof}, which is compatible with Theorem \ref{theorem generalizing BOX}.
\end{Remark}

\section{Rectangular Decomposition and Proof to the Main Theorem}
In this section we will completely characterize the shape of $\lambda$ satisfying the conditions restraining hook shape from Equations \eqref{equation: shallow}. Using this description, we then conclude with a proof of Theorem \ref{theorem generalizing BOX}. It happens that the conditions from Equations \eqref{equation: shallow} give a strong result on the dimensions of any rectangle that contains $b$ consecutive boxes on the rim of $\lambda$. Although we will often apply the lemma below to rectangles in the rectangular decomposition of $\lambda$, it is proven in greater generality here. The following result gives a strong restriction on the dimensions $r^x$ and $r^y$ (see Definition \ref{definition of rectangular decomposition}) of the rectangles.

\begin{Lemma}\label{lemma prohibiting shallow after steep rectangle}
Given a partition $\lambda$, suppose $r_\alpha$ and $r_{\beta}$ are two smallest rectangles each containing $b$ consecutive boxes of the rim of $\lambda$, i.e. $r_{\alpha}^x+ r_{\alpha}^y = r_{\beta}^x + r_{\beta}^y = b+1$. Let the box $B_0 = (i_0,j_0)$ be the box furthest northeast in $r_{\alpha}$. Suppose the following conditions are satisfied:

\begin{enumerate}
\item $(i_0,j_0+1)\notin\lambda$;
\item $r_\alpha^x>a\ge r_{\beta}^x$;
\item $r_{\beta}$ contains a box strictly southwest of $r_\alpha$.
\end{enumerate}
Then $\lambda$ contains a hook with leg length $a$ and arm length $b-a-1.$ 
\end{Lemma}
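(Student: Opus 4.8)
My plan is to translate the statement into the boundary (Maya/bead) encoding of $\lambda$ from Section 2.3 and then run a discrete intermediate value argument. The key preliminary observation is that a window of $b$ consecutive boxes of the rim is a border strip, and it is the border strip of a genuine hook of length $b$ precisely when its northeast-most box is a hand and its southwest-most box is a foot; in that case the hook has leg length $(\#\text{rows}-1)$ and arm length $(\#\text{columns}-1)$, summing to $b-1$. So the goal is to produce such a window spanning exactly $a+1$ rows (equivalently $b-a$ columns), which yields leg $a$ and arm $b-a-1$.

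First I would encode the boundary of $\lambda$ as a bead--gap sequence on $\mathbb{Z}$ in which beads record the rows of $\lambda$ (the construction of Section 2.3, set up so that rows rather than columns become beads). In this encoding a hook of length $b$ corresponds exactly to a bead at some position $\beta$ together with a gap at position $\beta-b$, and its leg length equals the number of beads strictly between $\beta-b$ and $\beta$. A window of $b$ consecutive rim boxes whose top lies at position $P$ spans $f(P):=\#\{\text{beads in }[P-b+1,P]\}$ distinct rows whenever its northeast box is a hand. Using that $B_0$ is a hand (this is exactly the hypothesis $(i_0,j_0+1)\notin\lambda$), the window $r_\alpha$ with top position $P_\alpha$ satisfies $f(P_\alpha)=r_\alpha^x>a$, hence $f(P_\alpha)\ge a+1$. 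Likewise $r_\beta$, whose top position $P_\beta$ lies strictly southwest of $r_\alpha$ (so $P_\beta<P_\alpha$), satisfies $f(P_\beta)\le r_\beta^x\le a$.

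The heart of the argument is that $f(\beta)-f(\beta-1)=[\beta\text{ is a bead}]-[\beta-b\text{ is a bead}]\in\{-1,0,1\}$, so $f$ changes by at most one as $\beta$ decreases. Since $f(P_\alpha)\ge a+1$ and $f(P_\beta)\le a$ with $P_\beta<P_\alpha$, taking $\beta_0:=\min\{\beta\in[P_\beta+1,P_\alpha]: f(\beta)\ge a+1\}$ produces a position with $f(\beta_0)=a+1$ and $f(\beta_0-1)=a$. The decisive point is that $f(\beta_0)-f(\beta_0-1)=1$ forces $\beta_0$ to be a bead and $\beta_0-b$ to be a gap. Thus $(\beta_0,\beta_0-b)$ determines a genuine hook of length $b$ whose leg equals the number of beads strictly between them, namely $f(\beta_0)-1=a$, so its arm is $b-a-1$, as required.

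The main obstacle is precisely this validity issue: an arbitrary window of $b$ consecutive rim boxes with the correct rectangle dimensions need not be the border strip of an actual hook (its ends need not be a hand and a foot), so a naive sliding-window argument directly in the Young diagram does not immediately yield a genuine hook. The bead encoding is what makes the argument go through, because the single event that drops the row count from $a+1$ to $a$ is simultaneously the event ``bead on top, gap on the bottom,'' which is exactly the condition for the window to be a genuine hook. In writing this up I would take care to match the convention of Section 2.3 (which places beads along the $\lambda^{\Tr}$-profile), transposing the roles of beads/gaps and of leg/arm as needed, and to confirm that ``$r_\beta$ contains a box strictly southwest of $r_\alpha$'' indeed supplies a window position $P_\beta<P_\alpha$ with $f(P_\beta)\le a$.
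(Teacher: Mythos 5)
Your argument is correct, and it reaches the conclusion by the same underlying mechanism as the paper --- a discrete intermediate value argument sliding a window of $b$ rim boxes from $r_\alpha$ to $r_\beta$ --- but in a genuinely different encoding. The paper works directly in the Young diagram: it forms the set $\mathcal{S}$ of column-feet along the rim segment from $B_0$ to the southwest corner of $r_\beta$, the set $\mathcal{E}$ of row-hands translated by $(+a,-(b-a-1))$, and proves $\mathcal{S}\cap\mathcal{E}\neq\emptyset$ by an induction comparing the row indices $\psi(j)$ of the $\mathcal{S}$-boxes against thresholds $x_j$ cut out by $\mathcal{E}$; a common box is simultaneously a foot and a shifted hand, which exhibits the desired hook. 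Your bead-window function $f$ linearizes exactly this: the identity $f(\beta)-f(\beta-1)=[\beta\text{ is a bead}]-[\beta-b\text{ is a bead}]$ turns the two-set crossing argument into a one-line discrete IVT, and --- the nicest feature --- the single event realizing the drop from $a+1$ to $a$ is automatically ``bead on top, gap at the bottom,'' so the genuineness of the hook comes for free rather than being built into the definitions of $\mathcal{S}$ and $\mathcal{E}$. The two points you flag do check out: indexing rim boxes by their content $j-i$, a segment of $b$ consecutive rim boxes occupies $b$ consecutive positions whose beads are precisely the hand-boxes of the rows it meets, so $f(P_\alpha)=r_\alpha^x\ge a+1$ (using that $B_0$ is a hand) and $f(P_\beta)\le r_\beta^x\le a$ whether or not $r_\beta$'s segment begins at a hand; and a box of $r_\beta$ strictly southwest of $r_\alpha$ has content strictly below that of the southwest corner of $r_\alpha$, i.e.\ below $P_\alpha-b+1$, while also being at least $P_\beta-b+1$, which forces $P_\beta<P_\alpha$. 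The only cost of your route is bookkeeping: the paper's Section~2.3 places beads along the $\lambda^{\Tr}$-profile, so you must transpose the bead/gap and leg/arm roles (or set up your own row-based beta-numbers) before the computation reads off leg $a$ and arm $b-a-1$ correctly; the paper's proof is more laborious but stays entirely inside conventions already in force.
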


Note that $r_\alpha$ and $r_\beta$ are not necessarily rectangles from the rectangular decomposition of $\lambda$; the indices $\alpha$ and $\beta$ are irrelevant to the statement of the Lemma but will be referenced later in the section when we apply Lemma \ref{lemma prohibiting shallow after steep rectangle} to rectangles in the rectangular decomposition. 

\begin{Example}\label{example for main lemma}
The example in Figure \ref{fig31lemma} illustrates the statement of Lemma \ref{lemma prohibiting shallow after steep rectangle}. The rectangles $r_\alpha$ and $r_\beta$ are  colored yellow and the corresponding boxes in the rim are outlined in red. We see that $B_0 = (i_0,j_0) = (1,8)$ and $(1,9)\notin \lambda$, thus the first condition is satisfied. We have $r_\alpha^x = 3$, $r_\alpha^y = 3$, $r_\beta^x = 2$, and $r_\beta^y = 4$, thus the second condition is satisfied. Finally, from the diagram, the third condition is satisfied as well. Lemma \ref{lemma prohibiting shallow after steep rectangle} implies that there is a hook of leg length $2$ and arm length $2$, which is outlined in blue.
\end{Example}

\begin{center}
\begin{figure}[h]
\begin{tikzpicture}[scale=0.5]
\draw (0,0) -- (0,6);
\draw (1,0) -- (1,6);
\draw (2,1) -- (2,6);
\draw (3,1) -- (3,6);
\draw (4,1) -- (4,6);
\draw (5,1) -- (5,6);
\draw (6,1) -- (6,6);
\draw (7,3) -- (7,6);
\draw (8,4) -- (8,6);
\draw (0,0) -- (1,0);
\draw (0,1) -- (6,1);
\draw (0,2) -- (6,2);
\draw (0,3) -- (7,3);
\draw (0,4) -- (8,4);
\draw (0,5) -- (8,5);
\draw (0,6) -- (8,6);
\draw[white] (0,-2)--(5,-2);
\draw[line width=1pt,  black] (0,0) -- (0,6) -- (8,6) --(8,4) --(7,4) --(7,3)--(6,3)--(6,1)--(1,1)--(1,0)--(0,0);
\draw[line width=1pt,red] (0,0)--(0,2)--(4,2)--(4,1)--(1,1)--(1,0)--(0,0);
\draw[line width=1pt,red] (5,3) --(7,3)--(7,4)--(8,4)--(8,6)--(7,6)--(7,5)--(6,5)--(6,4)--(5,4)--(5,3);
\draw [fill=yellow, opacity=0.2]
       (5,6) -- (8,6) -- (8,3) -- (5,3) -- cycle;
\draw [fill=yellow, opacity=0.2]
       (0,0) -- (0,2) -- (4,2) -- (4,0) -- cycle;     
\draw[line width=1.5pt,  blue] (4,4) -- (7,4) --(7,3) --(5,3)--(5,1) --(4,1)--(4,4);
\node at (7.5,5.5) {$B_0$};
\node at (8.7,4) {$r_\alpha$};
\node at (4.7,0) {$r_\beta$};
\end{tikzpicture}
\caption{Illustration of the statement of Lemma \ref{lemma prohibiting shallow after steep rectangle} on $(8,8,7,6,6,1)$ with $a=2$ and $b=5$.}
\label{fig31lemma}
\end{figure}
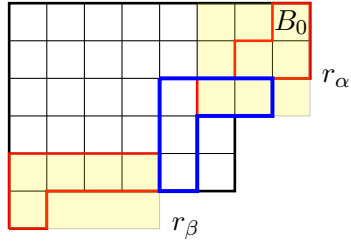
\end{center}

\begin{proof}[Proof of Lemma \ref{lemma prohibiting shallow after steep rectangle}]
Let $B_c = (i_c,j_c)$ be the box furthest southwest in $r_{\beta}$. We must have $j_c<j_0$ and $i_c>i_0$ since $r_\beta$ contains a box strictly southwest of $r_\alpha$. Denote by $\mathcal{B}$ the set of consecutive boxes in the rim of $\lambda$ starting at $B_0$ and ending at $B_c$. We define
\begin{align*}
\mathcal{S}= & \{(i,j)\in \mathcal{B} : (i+1,j)\notin \lambda\}\\\mathcal{E}=  & \{(i+a,j-(b-a-1)) : (i,j)\in \mathcal{B}, (i,j+1)\notin\lambda\}.
\end{align*}

For an example of this definition, see Figure \ref{fig332}.

We first show that $\mathcal{S}\cap\mathcal{E}\neq \emptyset$, then we use a box in this intersection to find a corresponding hook of leg length $a$ and arm length $b-a-1$. We prove that $\mathcal{S}\cap\mathcal{E}\neq \emptyset$ by examining the sequence of boxes in $\mathcal{S}$ and $\mathcal{E}$ when written from the northeast to southwest direction. Our approach is to show that since $\mathcal{S}$ begins south of $\mathcal{E}$ and ends north of $\mathcal{E}$, an ``intermediate value" argument implies that the two sets must intersect.

Since there is at least one box in the rim for each $y$-coordinate and $x$-coordinate, the set $\mathcal{S}$ contains exactly one box for every $y$-coordinate in $[j_c+1,j_0]$, and $\mathcal{E}$ contains one box for every $x$-coordinate in $[i_0+a,i_c+a].$ We sort the boxes in $\mathcal{S}$ and $\mathcal{E}$ in order from northeast to southwest and consider the two sequences. Since $B_0 = (i_0,j_0)\in \lambda$ and $(i_0,j_0+1)\notin\lambda$ by the first condition, the box $(i_0+a,j_0-(b-a-1))$ must lie in $\mathcal{E},$ and we denote this box, the first box of $\mathcal{E}$, by $E_{i_0+a}=(i_0+a,j_0-(b-a-1))$. Denote the southwest-most box of $r_\alpha$ by $B_{b-1}=(i_{b-1},j_{b-1}) = (i_0+r_\alpha^x-1,j_0-r_\alpha^y+1)$. We know that $E_{i_0+a}$ is northwest to $B_b$ (not necessarily strict) and these two boxes determine a line of slope $1$. Since $j_c<j_0-(b-a-1)\leq j_{b-1}=j_0-r_\alpha^y+1$, the unique box $(i',j_0-(b-a-1))\in\mathcal{S}$ in the same column of $E_{i_0+a}$ is not north of $E_{i_0+a}$, i.e. $i'\geq i_0+a$.

Next, we consider the unique box $E_{i_c}=(i_c,j')$ in $\mathcal{E}$, which has $x$-coordinate $i_c$. Since $r_\beta^y\geq b-a+1$, we find that $j'>j_c$. Next, we find the unique box $(i'',j')\in\mathcal{S}$ in the same column of $E_{i_c}$. Then $i''\leq i_c$ since $(i'',j') \in \mathcal{S} \subset \lambda$. This argument is illustrated by Figure \ref{fig31pf}.

\begin{center}
\begin{figure}[h]
\begin{tikzpicture}[scale=0.5]
\draw (0,0) -- (0,4);
\draw (1,0) -- (1,4);
\draw (2,1) -- (2,4);
\draw (3,1) -- (3,4);
\draw (4,1) -- (4,4);
\draw (5,1) -- (5,4);
\draw (6,1) -- (6,4);
\draw (7,3) -- (7,4);
\draw (0,0) -- (1,0);
\draw (0,1) -- (6,1);
\draw (0,2) -- (6,2);
\draw (0,3) -- (7,3);
\draw (0,4) -- (7,4);
\node at (0.5,0.5) {$B_c$};
\node at (2.5,-0.5) {\tiny{$E_{i_c} = (i_c , j')$}};
\node at (5.5,-0.5) {\tiny{$(i'',j')$}};
\draw[->,black] (2.5,-0.2) -- (3.5,0.5);
\draw[->,black] (5.5,-0.2) -- (3.5,1.5);
\draw[white] (0,-2)--(5,-2);
\draw [densely dotted] (3,0)--(3,1)--(4,1)--(4,0)--(3,0);
\draw [fill=yellow, opacity=0.2]
       (0,0) -- (0,2) -- (4,2) -- (4,0) -- cycle;     
\end{tikzpicture}
\caption{Illustration of $E_{i_c} = (i_c , j')$ and a box $(i'',j')$ in $\mathcal{S}$ north of it. The rectangle $r_\beta$ is colored in yellow.}
\label{fig31pf}
\end{figure}
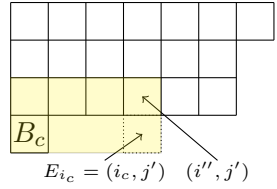
\end{center}

We now consider boxes in $\mathcal{S}$ and $\mathcal{E}$ inside the big rectangle (with sides parallel to the axis) whose northeast-most box is $E_{i_0+a}$ and southwest-most box is $E_{i_c}$. Boxes in $\mathcal{E}$ divide the rows into:
$$x_0=i_0+a<x_1<\ldots<x_k=i_c+1$$
such that the boxes in $\mathcal{E}$ on rows $[x_{j-1},x_j-1] $ lie in the same column $\varphi(j)$ and $\varphi(j)<\varphi(j-1)$ for all $j=1,\ldots,k$ (see Figure \ref{fig31}). Next, we define the function $\psi$ such that $(\psi(j),\varphi(j))$ is the unique box in $\mathcal{S}$ and column $\varphi(j)$. 

Suppose on the contrary that $\mathcal{S}\cap\mathcal{E}=\emptyset$. The above argument about $\mathcal{S}$ and $\mathcal{E}$ gives the conditions $\psi(1)\geq x_1$ and $\psi(k)< x_{k}$. Also note that $\psi(j)\geq \psi(j-1)$ because $\varphi(j)<\varphi(j-1)$ and we are considering the boxes of $\mathcal{S}$ from northeast to southwest. We show by induction that $\psi(j)\geq x_j$ for all $j$; this will give the necessary contradiction since we have already shown $\psi(k)< x$. 

The base case, where $j = 1$, is true from our reasoning above. Suppose that  $\psi(j)\geq x_j$ holds for indices strictly less than a fixed $j$. So $$\psi(j)\geq\psi(j-1)\geq x_{j-1}.$$

We know that $(x, \varphi(j))\in\mathcal{E}$ for $x\in[x_{j-1},x_j-1]$. Since $\mathcal{S} \cap \mathcal{E} = \emptyset$ by assumption, $\psi(j) \ge x_j$, which completes the induction. However, this implies that $\psi(k)\ge x_k = i_c+1$, which is a contradiction, so we must have $\mathcal{S}\cap\mathcal{E}\neq \emptyset.$ Figure \ref{fig31} illustrates this part of the proof.
 
\begin{center}
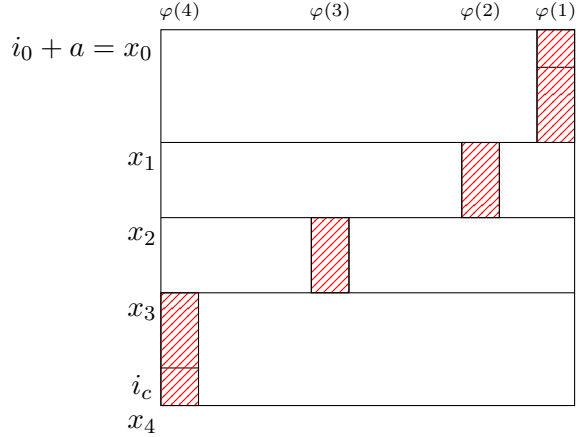
\begin{figure}[h]
\begin{tikzpicture}[scale=0.5]
\draw (0,0)--(0,10)--(11,10)--(11,0)--(0,0);
\draw (0,3)--(11,3);
\draw (0,5)--(11,5);
\draw (0,7)--(11,7);
\draw (1,0)--(1,3);
\draw(4,3)--(4,5);
\draw(5,3)--(5,5);
\draw(8,5)--(8,7);
\draw(9,5)--(9,7);
\draw(10,7)--(10,10);
\draw(0,1)--(1,1);
\draw (10,9)--(11,9);
\draw[pattern=north east lines, pattern color=red] (0,0) rectangle (1,3);
\draw[pattern=north east lines, pattern color=red] (4,3) rectangle (5,5);
\draw[pattern=north east lines, pattern color=red] (8,5) rectangle (9,7);
\draw[pattern=north east lines, pattern color=red] (10,7) rectangle (11,10);
\foreach \x/\y/\m in {-0.5/-0.5/$x_4$, -0.5/2.5/$x_3$, -0.5/4.5/$x_2$,-0.5/6.5/$x_1$,-2.1/9.5/$i_0+a=x_0$, -0.5/0.5/$i_c$,0.5/10.5/\tiny{$\varphi(4)$},4.5/10.5/\tiny{$\varphi(3)$},8.5/10.5/\tiny{$\varphi(2)$},10.5/10.5/\tiny{$\varphi(1)$}} 
\node at (\x,\y) {\m};
\end{tikzpicture}
\caption{Illustration of the shape of $\mathcal{S}$ and $\mathcal{E}$ in the proof of Lemma \ref{lemma prohibiting shallow after steep rectangle}. Boxes shaded black are in $\mathcal{S}$. Boxes shaded in red are in $\mathcal{E}$.}
\label{fig31}
\end{figure}
\end{center}

Pick $(i,j)\in \mathcal{S}\cap\mathcal{E}.$ Then $(i,j)\in \lambda$, and $(i+1,j)\notin \lambda$. Also, $(i-a,j+(b-a-1))\in\lambda$, and $(i-a,j+b-a)\notin\lambda$. Hence, $H_{i-a,j}$ has corresponding leg length $a$ and arm length $b-a-1$, as desired. See Figure \ref{fig332} for an illustration of this argument.
\end{proof}

\begin{center}
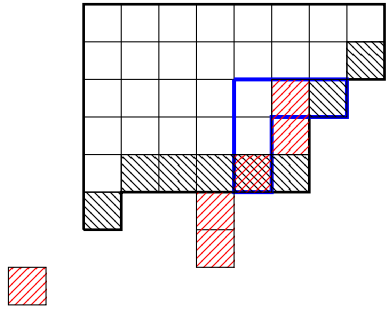
\begin{figure}[h]
\begin{tikzpicture}[scale=0.5]
\draw (0,0) -- (0,6);
\draw (1,0) -- (1,6);
\draw (2,1) -- (2,6);
\draw (3,1) -- (3,6);
\draw (4,1) -- (4,6);
\draw (5,1) -- (5,6);
\draw (6,1) -- (6,6);
\draw (7,3) -- (7,6);
\draw (8,4) -- (8,6);
\draw (0,0) -- (1,0);
\draw (0,1) -- (6,1);
\draw (0,2) -- (6,2);
\draw (0,3) -- (7,3);
\draw (0,4) -- (8,4);
\draw (0,5) -- (8,5);
\draw (0,6) -- (8,6);
\draw (3,0) -- (4,0);
\draw[line width=1pt,  black] (0,0) -- (0,6) -- (8,6) --(8,4) --(7,4) --(7,3)--(6,3)--(6,1)--(1,1)--(1,0)--(0,0);
\draw[line width=1.5pt,  blue] (4,4) -- (7,4) --(7,3) --(5,3)--(5,1) --(4,1)--(4,4);
% south ones
\draw[pattern=north west lines, pattern color=black] (7,5) rectangle (8,4);
\draw[pattern=north west lines, pattern color=black] (6,4) rectangle (7,3);
\draw[pattern=north west lines, pattern color=black] (1,2) rectangle (6,1);
\draw[pattern=north west lines, pattern color=black] (0,1) rectangle (1,0);
\draw[pattern=north east lines, pattern color=red] (5,2) rectangle (6,4);
\draw[pattern=north east lines, pattern color=red] (4,1) rectangle (5,2);
\draw[pattern=north east lines, pattern color=red] (3,-1) rectangle (4,1);
\draw[pattern=north east lines, pattern color=red] (-2,-2) rectangle (-1,-1);
\end{tikzpicture}

\caption{Illustration of the argument in the proof of Lemma \ref{lemma prohibiting shallow after steep rectangle} on $(8,8,7,6,6,1)$ with $a=2$ and $b=5$. The boxes in $\mathcal{S}$ are shaded with black lines and $\mathcal{E}$ is shaded with red lines. They intersect at $(5,5),$ which determines the hook $H_{3,5}$ with $a_{3,5} = b-a-1 = 2$ and $l_{3,5} = a = 2$, outlined in blue.}
\label{fig332}
\end{figure}
\end{center}

At this point, we note that there is a special type of hook of the shape
\begin{equation}\label{leg=ta}
    H_{i,j}=tb, \;l_{i,j}=ta \text{ for some }t\in\mathbb{N}_{>0}.
\end{equation}
Note that this type of hook is not $(a,b)$-shallow, and in the case of $b>2a$, it is not $(a,b)$-steep either.

\begin{Lemma}\label{lemma: ladders gain where empty}
Consider a $\Cr_{a,b}$-valid partition $\lambda$ containing no hook of shape \eqref{leg=ta}. For a ladder $\ell_{1,y} : y\in[1,\lambda_1]$ that is not full (i.e. $\ell_{1,y}\not \subset\lambda$), let $(i,j)$ be the box furthest north in $\ell_{1,y}\setminus\lambda$. Then, $(i-1,j)\in\lambda$.
\end{Lemma}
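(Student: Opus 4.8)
The plan is to argue by contradiction, reducing the statement to the comparison Lemma~\ref{lemma prohibiting shallow after steep rectangle}. First I would fix coordinates on the two parallel ladders. Writing the non-full ladder as $\ell_{1,y}=\{(1+sa,\,y-s(b-a)) : s\ge 0\}$ (running southwest), the hypothesis $y\in[1,\lambda_1]$ forces its northernmost box $(1,y)$ to lie in $\lambda$, so the furthest-north box of $\ell_{1,y}\setminus\lambda$ has the form $(i,j)=(1+ka,\,y-k(b-a))$ with $k\ge 1$, and all boxes of index $<k$ lie in $\lambda$. The box directly north of it, $(i-1,j)=(ka,\,y-k(b-a))$, is exactly the index-$k$ box of the parallel ladder $\ell_{0,y}$, while $(i-a,\,j+(b-a))=(1+(k-1)a,\,y-(k-1)(b-a))$ is the index-$(k-1)$ box of $\ell_{1,y}$ and hence lies in $\lambda$. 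So it suffices to show the index-$k$ box of $\ell_{0,y}$ lies in $\lambda$; I would assume for contradiction that it does not, i.e. $\lambda_{ka}\le j-1$ while $\lambda_{1+(k-1)a}\ge j+(b-a)$.

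Next I would use $\Cr_{a,b}$-validity to locate a box of $\ell_{0,y}$ that re-enters $\lambda$ further southwest. By Lemma~\ref{lemma for colreg valid}, since $\ell_{1,y}$ is not full, $\ell_{0,y}$ gains a box at some $(i',j')\in\lambda\cap\ell_{0,y}$ with $(i'+1,j')\notin\lambda$; the latter box is the same-index box of $\ell_{1,y}$, so its index is $\ge k$ (as $k$ is the smallest index giving an out-box on $\ell_{1,y}$), and it cannot equal $k$ by the contradiction hypothesis. Hence there is an index $s_0>k$ whose $\ell_{0,y}$-box lies in $\lambda$; taking $s_0$ minimal gives a descent, namely the index-$(s_0-1)$ box of $\ell_{0,y}$ is out of $\lambda$ while the index-$s_0$ box is in. In row/column terms, $\lambda_{s_0 a}\ge y-s_0(b-a)$ whereas $\lambda_{(s_0-1)a}\le y-(s_0-1)(b-a)-1$.

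I would then feed this configuration into Lemma~\ref{lemma prohibiting shallow after steep rectangle}. The high row near the top, $\lambda_{1+(k-1)a}\ge j+(b-a)$, ought to sit inside a rim rectangle $r_\alpha$ with $r_\alpha^x>a$ whose northeast box ends its row (so the analogue of $(i_0,j_0+1)\notin\lambda$ holds), while the re-entry at index $s_0$, lying strictly southwest, should produce a rim rectangle $r_\beta$ with $r_\beta^x\le a$; these are exactly the hypotheses of that lemma. It then yields a hook of $\lambda$ with leg length $a$ and arm length $b-a-1$, i.e. $H_{\cdot,\cdot}=1+a+(b-a-1)=b$ with leg $=a$. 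This is a hook of shape \eqref{leg=ta} with $t=1$, contradicting the standing assumption that $\lambda$ has no such hook; therefore $(i-1,j)\in\lambda$. Equivalently, one can bypass the rectangle language and rerun the $\mathcal{S}/\mathcal{E}$ intersection argument from the proof of Lemma~\ref{lemma prohibiting shallow after steep rectangle} directly on the rim segment stretching from the top box down to the re-entry box.

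The main obstacle is this third step, extracting the \emph{exact} hook. The naive corner built from the descent only gives leg $\ge a$ and arm $\le b-a-1$, and since a single row of $\lambda$ can drop by more than one unit, the arm need not equal $b-a-1$ at the first available corner. The real work, exactly as in Lemma~\ref{lemma prohibiting shallow after steep rectangle}, is the combinatorial claim that the set of column-bottom rim boxes and the set of their $(a,\,-(b-a-1))$-translates must intersect, which is what pins the arm to $b-a-1$ simultaneously with leg $a$; checking that the ladder descent really produces rectangles obeying the strict inequalities $r_\alpha^x>a\ge r_\beta^x$ (rather than merely ``steep above, shallow below'') is the delicate point to nail down.
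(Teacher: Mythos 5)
Your first two steps track the paper's own proof: assume $(i-1,j)\notin\lambda$ and use Lemma \ref{lemma for colreg valid} to produce a gain box $(i+ta-1,\,j-t(b-a))\in\lambda\cap\ell_{0,y}$ with $(i+ta,\,j-t(b-a))\notin\lambda$ for some $t\ge 1$ (your $s_0=k+t$). The third step, however, fails for a concrete geometric reason: the configuration you have built is the \emph{mirror image} of the hypothesis of Lemma \ref{lemma prohibiting shallow after steep rectangle}. Between rows $1+(k-1)a$ and $ka$ the rim carries at least $\lambda_{1+(k-1)a}-\lambda_{ka}+a\ge(b-a+1)+a=b+1$ boxes in only $a$ rows, so the enclosing rectangle there has $r^x\le a$ (shallow); around the re-entry, rows $(s_0-1)a$ through $s_0a-1$ carry at most $(b-a-1)+a=b-1$ rim boxes, so the rectangle there has $r^x>a$ (steep). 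That is ``shallow northeast, steep southwest,'' whereas the comparison lemma demands $r_\alpha^x>a\ge r_\beta^x$ with the steep $r_\alpha$ to the \emph{northeast} and the shallow $r_\beta$ strictly southwest of it; no assignment of your two rectangles meets this, transposition does not repair the orientation (it exchanges northeast with southwest \emph{and} replaces the parameter $a$ by $b-a$), and rerunning the $\mathcal{S}/\mathcal{E}$ intersection argument breaks because its two boundary conditions are exactly what the reversed orientation destroys. You flag the orientation as the ``delicate point'' but assert it the wrong way around.

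The paper avoids this detour entirely: it constructs the forbidden hook \emph{directly} as $H_{i-1,\,j-t(b-a)}$, whose foot is the gain box (so the leg is exactly $ta$) and whose hand is $(i-1,j-1)$ (so the arm is exactly $t(b-a)-1$, since $(i-1,j)\notin\lambda$), giving $H=tb$ with leg $ta$ --- a hook of shape \eqref{leg=ta} for the actual $t$, not merely $t=1$, which is all your route could ever produce. The one nontrivial ingredient this needs, and which your proposal omits entirely, is $(i-1,j-1)\in\lambda$: the paper obtains it by inducting on $y$ over the non-full first-row ladders from west to east, using that $\ell_{1,y-1}$ is either full (so $(i,j-1)\in\ell_{1,y-1}\subset\lambda$) or has already been handled by the inductive hypothesis. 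Without that induction the arm of the candidate hook is not pinned down, its length need not be divisible by $b$, and no contradiction results.
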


\begin{Example}
We illustrate the statement of Lemma \ref{lemma: ladders gain where empty} in Figure \ref{fig34} in the case of $a = 2$ and $b = 5$. On the left side, in $(8,5,4,1)$, the boxes $(5,1)$ and $(3,5)$ are the north-most boxes on ladders $\ell_{1,7}$ and $\ell_{1,8}$ respectively that are not contained in $\lambda$. The boxes immediately north of $(5,1)$ and $(3,5)$ are in $\lambda$ as Lemma \ref{lemma: ladders gain where empty} states.

On the right hand side, however, in $(8,4,4,2)$, the box $(3,5)$ is the north-most box on $\ell_{1,8}$ not contained in $\lambda$. In this case $(2,5)\notin\lambda$, which contradicts Lemma \ref{lemma: ladders gain where empty}. Then Lemma \ref{lemma for colreg valid} implies that $(4,2)\in\lambda$ and $(5,2)\notin\lambda$. This gives hook $H_{2,2}$ of shape \eqref{leg=ta}, which is not allowed by the statement of the lemma. 
\end{Example}
\begin{figure}[h]
\begin{tikzpicture}[scale=0.5]
\draw (0,0) -- (0,4);
\draw (1,0) -- (1,4);
\draw (2,1) -- (2,4);
\draw (3,1) -- (3,4);
\draw (4,1) -- (4,4);
\draw (5,2) -- (5,4);
\draw (6,3) -- (6,4);
\draw (7,3) -- (7,4);
\draw (8,3) -- (8,4);
\draw (0,0) -- (1,0);
\draw (0,1) -- (4,1);
\draw (0,2) -- (5,2);
\draw (0,3) -- (8,3);
\draw (0,4) -- (8,4);
\draw [densely dotted] (4,1)--(5,1)--(5,2)--(4,2);
\draw [densely dotted] (0,0)--(1,0)--(1,-1)--(0,-1)--(0,0);
\draw (8,3) -- (3.5,0) ;
\draw (7,3) -- (-0.5,-2) ;
\node at (5.5,0.5) {$\ell_{1,8}$};
\node at (-1,-1.5) {$\ell_{1,7}$};
\end{tikzpicture}
\quad
\quad
\quad
\begin{tikzpicture}[scale=0.5]
\draw (0,0) -- (0,4);
\draw (1,0) -- (1,4);
\draw (2,0) -- (2,4);
\draw (3,1) -- (3,4);
\draw (4,1) -- (4,4);
\draw (5,3) -- (5,4);
\draw (6,3) -- (6,4);
\draw (7,3) -- (7,4);
\draw (8,3) -- (8,4);
\draw (0,0) -- (2,0);
\draw (0,1) -- (4,1);
\draw (0,2) -- (4,2);
\draw (0,3) -- (8,3);
\draw (0,4) -- (8,4);
\draw (8,3) -- (0.5,-2) ;
\draw (7,3) -- (-0.5,-2) ;
\node at (5.5,0.5) {$\ell_{1,8}$};
\node at (-1,-1.5) {$\ell_{1,7}$};
\draw[line width=1.5pt,  blue] (1,3) -- (4,3) -- (4,2) -- (2,2) -- (2,0) -- (1,0) -- (1,3);
\draw [densely dotted] (0,0)--(1,0)--(1,-1)--(0,-1)--(0,0);
\draw [densely dotted] (1,0)--(2,0)--(2,-1)--(1,-1)--(1,0);
\end{tikzpicture}
\caption{Illustration of the statement of Lemma \ref{lemma: ladders gain where empty}. The ladders $\ell_{1,y}$ are labeled. A hook of shape \eqref{leg=ta} is outlined in blue. Key boxes in $\ell_{1,y}\setminus\lambda$ are drawn in dotted lines.}
\label{fig34}
\end{figure}
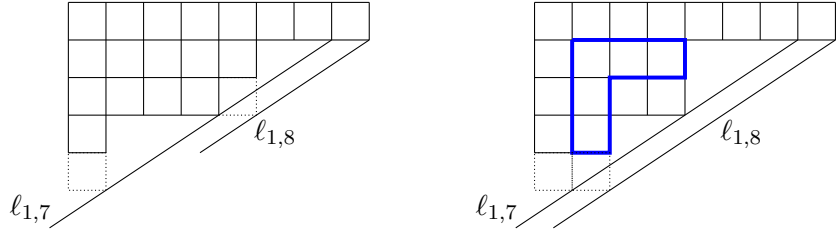

\begin{proof}[Proof of Lemma \ref{lemma: ladders gain where empty}]
Let $\ell_{1,y_1}$ be the westmost ladder that is not full, passing through a box in the first row. Then all $\ell_{1,y}$ east of $\ell_{1,y_1}$ are not full either. Thus, we denote by $\ell_{1,y_1},\ell_{1,y_1+1},\ldots,\ell_{1,\lambda_1}$ to be all the ladders passing through a box in the first row of $\lambda$ that are not fully contained in $\lambda$ (if there are any). We proceed by induction to show that the lemma holds for all of these ladders.

In the base case, we consider the ladder $\ell_{1,y_1}$. Let $(i,j)$ be the box furthest north on $\ell_{1,y_1}$ that is not in $\lambda$. Note that $i>1$ because $\ell_{1,y_1}$ contains a box in the first row. Suppose for the sake of contradiction that $(i-1,j)\notin\lambda$. Since $\lambda$ is $\Cr_{a,b}$-valid, Lemma \ref{lemma for colreg valid} implies $\exists\ t\in \mathbb{N}$ such that $(i+ta,j-t(b-a))\notin \lambda$ and $(i+ta-1,j-t(b-a))\in\lambda.$ By construction $\ell_{1,y_1-1}$ is completely contained in $\lambda$, so $(i,j-1)\in \ell_{1,y_1-1}\subset \lambda$, and thus $(i-1,j-1)\in \lambda$ as well. Then $H_{i-1,j-t(b-a)}$ satisfies $l_{i-1,j-t(b-a)} = ta$ and $a_{i-1,j-t(b-a)} = t(b-a)-1$, which contradicts the hypothesis that $\lambda$ contains no hook of shape \eqref{leg=ta}. Hence, our initial assumption was false, and $(i-1,j)\in\lambda$, which completes the base case. Figure \ref{fig33} illustrates this part of the proof.
\begin{center}
\begin{figure}[h]
\begin{tikzpicture}[scale=0.5]
\draw[line width=1pt]  (0,1)--(0,6)--(6,6)--(6,5)--(1,5)--(1,1)--(0,1);
\draw (0,2)--(1,2);
\draw (5,5)--(5,6);
\draw (5,5)--(5,4)--(6,4)--(6,5);
\draw(0.25,-0.5)--(9.25,5.5);
\draw [densely dotted] (0,1)--(0,0)--(1,0)--(1,1);
\draw [densely dotted] (6,4)--(7,4)--(7,5)--(6,5);
\draw [densely dotted] (7,5)--(7,6)--(6,6);
\draw[->,black] (0.5,6.5)--(0.5,5.5);
\draw[->,black] (7.5,3.5)--(6.5,4.5);
\draw[->,black] (-0.5,0.5)--(0.5,0.5);
\draw (0,5)--(1,5)--(1,6);
\foreach \x/\y/\m in {10/5.5/$\ell_{1,y_1}$} 
\node at (\x,\y) {\m};
\node at (-4.1,0.5) {$(i+ta,j-t(b-a))$};
\node at (0.5,6.9) {$(i-1,j-t(b-a))$};
\node at (8,3) {$(i,j)$};
\end{tikzpicture}
\caption{Illustration of the proof of Lemma \ref{lemma: ladders gain where empty}.}
\label{fig33}
\end{figure}
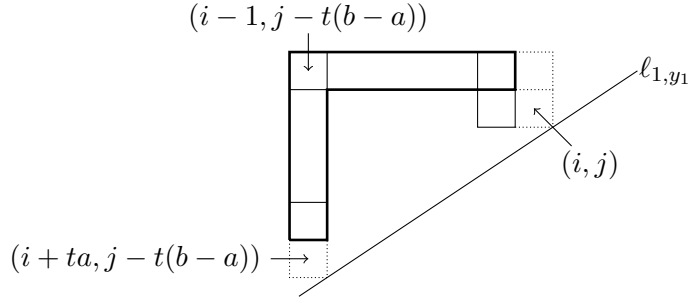
\end{center}

For the inductive step, we reason similarly as we did in the base case. Assume that the lemma holds for all $\ell_{1,y}$: $ y\in [y_1,y']$ for a fixed $y' < \lambda_1$. Let $(i,j)$ be the box furthest north on $\ell_{1,y'}$ that is not in $\lambda$. Once again, note that $i>1$ because $\ell_{1,y'}$ contains a box in the first row. Suppose for the sake of contradiction that $(i-1,j)\notin\lambda$.  Since all boxes in $\ell_{1,y'}$ north of $(i,j)$ are in $\lambda$, all boxes in $\ell_{1,y'-1}$ north of $(i,j)$ are also in $\lambda$. 

Consider the box $(i,j-1)$ on ladder $\ell_{1,y'}$. If $(i,j-1)\in\lambda$, then $(i-1,j-1)\in\lambda$ since it lies immediately north of $(i,j-1)$; if $(i,j-1)\notin\lambda$, then $(i,j-1)$ is the north-most box in $\ell_{1,y'}\setminus\lambda$. In this case, the inductive hypothesis also guarantees $(i-1,j-1)\in\lambda$. Thus, we conclude $(i-1,j-1) \in\lambda$ and $(i-1,j)\notin\lambda$ in all cases.

Using the exact same reasoning from the proof of the base case, we can use Lemma \ref{lemma for colreg valid} to find a box $(i+ta-1,j-t(b-a))\in \lambda$ such that $(i+ta,j-t(b-a))\notin \lambda$. This gives a hook $H_{i-1,j-t(b-a)}=tb$ with leg length $ta$ and contradicts the hypothesis that $\lambda$ contains no hook of shape \eqref{leg=ta}. Thus, we must have $(i-1,j)\in \lambda$, which completes the induction.
\end{proof}

Consider the $b$-rectangular decomposition of $\lambda$ in Definition \ref{definition of rectangular decomposition}. Define $\omega(\lambda)$ to be largest index such that 
$$r^x_{\omega(\lambda)}=a,\; r^y_{\omega(\lambda)} = b-a+1,$$ and define $\psi(\lambda)=\sum_{i=1}^{\omega(\lambda)}r_i^x.$ When no rectangle of such shape exists, we let $\omega(\lambda) = \psi(\lambda) = 0$.

\begin{Example}
When $a=2$ and $b=5$, $\omega((2,1,1)) = \psi((2,1,1)) = 0$; and $\omega((8,4,1)) = 2, \psi((8,4,1)) = 3$, which is shown in Figure \ref{fig32}.
\begin{center}
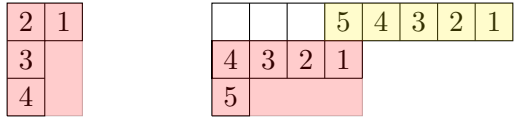
\begin{figure}[h]
\begin{tikzpicture}[scale=0.5]
\draw (0,0)--(1,0)--(1,2)--(2,2)--(2,3)--(0,3)--(0,0);
\draw (0,1)--(1,1);
\draw (0,2)--(1,2)--(1,3);
\foreach \x/\y/\m in {0.5/0.5/$4$,0.5/1.5/$3$,0.5/2.5/$2$,1.5/2.5/$1$} 
\node at (\x,\y) {\m};
\draw [fill=red, opacity=0.2]
       (0,0) rectangle (2,3);  
\end{tikzpicture}
\qquad
\qquad
\begin{tikzpicture}[scale=0.5]
\draw(0,0)--(0,3)--(8,3)--(8,2)--(0,2)--(0,1)--(4,1);
\draw(0,0)--(1,0)--(1,3);
\draw(2,1)--(2,3);
\draw(3,1)--(3,3);
\draw(4,1)--(4,3);
\draw(5,2)--(5,3);
\draw(6,2)--(6,3);
\draw(7,2)--(7,3);
\foreach \x/\y/\m in {0.5/0.5/$5$,0.5/1.5/$4$,1.5/1.5/$3$,2.5/1.5/$2$,3.5/1.5/$1$, 3.5/2.5/$5$,4.5/2.5/$4$,5.5/2.5/$3$,6.5/2.5/$2$,7.5/2.5/$1$} 
\node at (\x,\y) {\m};
\draw [fill=yellow, opacity=0.2]
       (3,2) rectangle (8,3);
\draw [fill=red, opacity=0.2]
       (0,0) rectangle (4,2);  
\end{tikzpicture}
\caption{Illustration of the definition of $\omega(\lambda)$ and $\psi(\lambda)$.}
\label{fig32}
\end{figure}
\end{center}
\end{Example}

We first give a description of the general shape of all $\lambda$ that $\Cr_{a,b}$-valid and contain no hooks given by \eqref{leg=ta}. Note that these conditions are strictly weaker than the conditions of Theorem \ref{theorem generalizing BOX}. Indeed, using only these two restrictions, we can already find a very specific description for the rectangular decomposition of $\lambda.$

\begin{Proposition}\label{proposition rectangular decomposition}
Given a $\Cr_{a,b}$-valid partition $\lambda$ containing no hooks of shape \eqref{leg=ta}, denote by $r_1,\ldots,r_k$ to be the rectangles in its $b$-rectangular decomposition, defined by Definition \ref{definition of rectangular decomposition},  with $r_\alpha^x+r_\alpha^y=b+1: \alpha\in[1, k]$. (Note that these are all the rectangles in the $b$-rectangular decomposition except for possibly the last one.) Then for all $\alpha\in [1,\omega(\lambda)]$, the widths of the rectangles satisfy $r^x_\alpha = a$, and for all $\alpha\in[\omega(\lambda)+1,k]$ the widths satisfy $r^x_{\alpha} >a$.
\end{Proposition}

\begin{Example}
We illustrate the statement of Proposition \ref{proposition rectangular decomposition} on $(11,8,7,5,3,2,2,2)$ in the case of $a = 2$ and $b = 5$. In Figure \ref{fig37}, the rectangles in the $5$-rectangular decomposition are $r_1,r_2,$ and $r_3$. Since $\omega(\lambda) = 2$, $r_1^x = r_2^x = 2 = a$ and $r_3^x = 4 > a$.
\end{Example}

\begin{center}
\begin{figure}[h]
\begin{tikzpicture}[scale=0.5]

\draw (2,1) -- (2,9);
\draw (3,1) -- (3,9);
\draw (4,1) -- (4,9);
\draw (5,4) -- (5,9);
\draw (6,5) -- (6,9);
\draw (7,5) -- (7,9);
\draw (8,6) -- (8,9);
\draw (9,6) -- (9,9);
\draw (10,7) -- (10,9);
\draw (11,8) -- (11,9);
\draw (12,8) -- (12,9);
\draw (13,8) -- (13,9);
\draw (2,1) -- (4,1);
\draw (2,2) -- (4,2);
\draw (2,3) -- (4,3);
\draw (2,4) -- (5,4);
\draw (2,5) -- (7,5);
\draw (2,6) -- (9,6);
\draw (2,7) -- (10,7);
\draw (2,8) -- (13,8);
\draw (2,9) -- (13,9);
\draw [fill=yellow, opacity=0.2] (13,9) rectangle (9,7);
\draw [fill=yellow, opacity=0.2] (5,5) rectangle (9,7);
\draw [fill=yellow, opacity=0.2] (5,5) rectangle (3,1);
\node at (12.7,7.5) {$r_1$};
\node at (8.7,5.5) {$r_2$};
\node at (4.7,1.5) {$r_3$};

\end{tikzpicture}
\caption{Illustration of the statement of Proposition \ref{proposition rectangular decomposition} on $\lambda = (11,8,7,5,3,2,2,2)$, with $a=2$ and $b=5$. The yellow rectangles are $r_1,r_2,$ and $r_3$ of the $5$-rectangular decomposition.}
\label{fig37}
\end{figure}
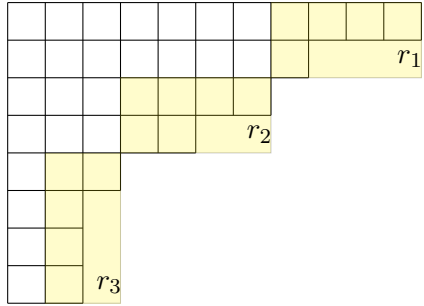
\end{center}

\begin{proof}[Proof of Proposition \ref{proposition rectangular decomposition}]
We first show that $r_1^x\geq a$. Suppose for the sake of contradiction that $r_1^x\leq a-1$ and $r_1^y\geq b-a+2$. Consider the ladder $\ell_{1,\lambda_1}$ and $\ell_{0,\lambda_1}$. Due to the dimensions of $r_1$, the box $(a+1,\lambda_1-(b-a))\in\ell_{1,\lambda_1}\setminus\lambda$ and $(a,\lambda_1-(b-a))\in\ell_{0,\lambda_1}\setminus\lambda$. However, this contradicts Lemma \ref{lemma: ladders gain where empty} since $(a+1,\lambda_1-(b-a))$ is the north-most box of $\ell_{1,\lambda_1}$ not contained in $\lambda$ and $(a,\lambda_1-(b-a)) \notin\lambda$ either. Thus, we must have $r_1^x\geq a$. We now consider the cases where $r_1^x> a$ and $r_1^x = a$ separately.

If $r_1^x> a$, Lemma \ref{lemma prohibiting shallow after steep rectangle} implies that for all $\alpha>1$ we have $r_\alpha^x>a$. In this case, $\omega(\lambda)=0$, and both parts of the proposition are satisfied.

If $r_1^x=a$, then $\omega(\lambda)\geq 1$. To prove the first part of the proposition, that for all $\alpha\in [1,\omega(\lambda)]$ the widths of the rectangles satisfy $r^x_\alpha = a$, we proceed by induction on $\alpha$.

Since $r_1^x=a$, the base case has already been done. Now suppose that $r^x_\alpha = a$ for $\alpha\in [1,\alpha']$ for a fixed $\alpha'<\omega(\lambda)$ and consider $r_{\alpha'+1}$. Denote the northeast-most box in $r_{\alpha'+1}$ by $(i,j)$. By the inductive hypothesis, all rectangles in the $b$-rectangular decomposition north of $r_{\alpha'+1}$ have width $r^x = a$, so the ladder $\ell_{i,j}$ must pass through the first row of $\lambda$, and $(i,j)$ is the north-most box in $\ell_{i,j}$ not contained in $\lambda$. Then proceeding as we did in the base case, if $r^x_{\alpha'+1}< a$, the two boxes $(i+a,j-(b-a))$ and $(i+a-1,j-(b-a))$ are both not in $\lambda$, which contradicts Lemma \ref{lemma: ladders gain where empty} (see Figure \ref{figpf35}). Thus, $r^x_{\alpha'} \geq a$. If $\alpha'<\omega(\lambda)$, then applying Lemma \ref{lemma prohibiting shallow after steep rectangle} on $r_{\alpha'}$ and $r_{\omega(\lambda)}$ forces $r^x_{\alpha'} \le r^x_{\omega(\lambda)} = a$. In particular, $r^x_{\alpha'} = a$ exactly, which completes the inductive step. 

\begin{center}
\begin{figure}[h]
\begin{tikzpicture}[scale=0.5]
\draw (0,0)--(0,2);
\draw (1,0)--(1,2);
\draw (2,0)--(2,2);
\draw (3,1)--(3,2);
\draw (4,1)--(4,2);
\draw (0,0) -- (2,0);
\draw (0,1) -- (4,1);
\draw (0,2) -- (4,2); 
\draw (-0.5,-2)--(5.5,2);
\foreach \x/\y/\m in {-0.5/1/\tiny{$a$},2/2.5/\tiny{$b-a+1$}} 
\node at (\x,\y) {\m};
\end{tikzpicture}
\quad\quad\quad
\begin{tikzpicture}[scale=0.5]
\draw (-1,1)--(-1,2);
\draw (0,1)--(0,2);
\draw (1,1)--(1,2);
\draw (2,1)--(2,2);
\draw (3,1)--(3,2);
\draw (4,1)--(4,2);
\draw (-1,1) -- (4,1);
\draw (-1,2) -- (4,2); 
\draw (-0.5,-2)--(5.5,2);
        \node at (-3.5,0.5) {\tiny{$(i+a-1,j-(b-a))$}};
\node at (-3,-0.5) {\tiny{$(i+a,j-(b-a))$}};
\node at (3.5,2.8) {\tiny{$(i,j)$}};
\draw[densely dotted] (0,0) -- (0,1) -- (1,1) -- (1,0) -- (0,0);
\draw[densely dotted] (0,-1) -- (0,0) -- (1,0) -- (1,-1) -- (0,-1);
\draw[->,black] (-0.5,0.5)--(0.5,0.5);
\draw[->,black] (-0.5,-0.5)--(0.5,-0.5);
\draw[->,black] (3.5,2.5)--(3.5,1.5);
\foreach \x/\y/\m in {-2/1.5/\tiny{$a-1$},1.0/2.5/\tiny{$b-a+2$}} 
\node at (\x,\y) {\m};
\end{tikzpicture}
\caption{Illustration of the inductive step for the case when $a = 2$ and $b = 5$. The left side shows a rectangle with width $r^x = a$, and the right side shows a rectangle with width $r^x < a$. In this case, the boxes $(i+a,j-(b-a))$ and $(i+a-1,j-(b-a))$ are both not in $\lambda$.}
\label{figpf35}
\end{figure}
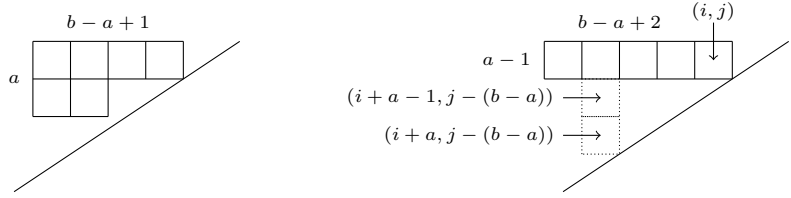
\end{center}

Finally, we show the second part of the proposition, that for all $\alpha\in[\omega(\lambda)+1,k]$ the widths of the rectangles satisfy $r^x_{\alpha} >a$. For any $\alpha$ in this interval, Lemma \ref{lemma prohibiting shallow after steep rectangle} implies that $r^x_{\alpha} \ge a$, and the definition of $\omega(\lambda)$ being the maximal index such that $r^x_{\alpha} = a$ forces $r^x_{\alpha} > a$.
\end{proof}

\begin{Lemma}\label{lemma later hooks not shallow}
Given a $\Cr_{a,b}$-valid partition $\lambda$ containing no hook of shape \eqref{leg=ta}, for $i>\psi(\lambda) $, any hook $H_{i,j}$ divisible by $b$ is not $(a,b)$-shallow.
\end{Lemma}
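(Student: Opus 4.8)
The plan is to argue by contradiction. Suppose some hook $H_{i,j}$ with $b\mid H_{i,j}$, say $H_{i,j}=tb$, is $(a,b)$-shallow while $i>\psi(\lambda)$. By Remark \ref{shallow-steep} this means $l_{i,j}\le ta-1$ and $a_{i,j}\ge t(b-a)$, and the hand box $\mathfrak{h}_{i,j}=(i,\lambda_i)$ sits in row $i>\psi(\lambda)$, i.e.\ strictly below the shallow part of $\lambda$ described in Proposition \ref{proposition rectangular decomposition}. I will traverse the rim hook $R$ of $H_{i,j}$ (the $tb$ consecutive rim boxes running from $\mathfrak{h}_{i,j}$ southwest to $\mathfrak{f}_{i,j}$) and cut it into $t$ consecutive blocks of $b$ boxes, with smallest bounding rectangles $s_1,\dots,s_t$ (so $s_m^x+s_m^y=b+1$). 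Since the total number of south steps in $R$ equals $l_{i,j}\le ta-1$ and block $s_m$ contributes $s_m^x-1$ of them, we get $\sum_m(s_m^x-1)\le ta-1$; were every block steep ($s_m^x\ge a+1$) this would force $ta\le ta-1$. Hence at least one block is shallow, and I let $s_{m_*}$ be the first such block ($s_{m_*}^x\le a$).

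The core of the argument is then a single application of Lemma \ref{lemma prohibiting shallow after steep rectangle}, which turns a shallow rectangle lying strictly southwest of a steep one (whose northeast corner is a row end) into a hook with leg $a$ and arm $b-a-1$, precisely a hook of shape \eqref{leg=ta} with $t=1$, contradicting the hypothesis on $\lambda$. I distinguish two cases according to $m_*$. If $m_*\ge 2$, then $s_1$ is steep and its northeast box is the hand $\mathfrak{h}_{i,j}=(i,\lambda_i)$, which automatically satisfies $(i,\lambda_i+1)\notin\lambda$; taking $r_\alpha=s_1$ and $r_\beta=s_{m_*}$ (which lies strictly southwest of $s_1$ since $s_1^x\ge 2$) the lemma yields the forbidden hook. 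If $m_*=1$, I compare $s_1$ instead with the first steep rectangle $r_{\omega(\lambda)+1}$ of the $b$-rectangular decomposition, whose northeast box is the rightmost box of row $\psi(\lambda)+1$ and is therefore a row end. When $i>\psi(\lambda)+1$ this rectangle lies strictly northeast of the hand, so Lemma \ref{lemma prohibiting shallow after steep rectangle} applies with $r_\alpha=r_{\omega(\lambda)+1}$ and $r_\beta=s_1$ and again produces the forbidden hook. When $i=\psi(\lambda)+1$ the hand box coincides with the northeast box of $r_{\omega(\lambda)+1}$, so, since inside a single piece the $b$-rim runs contiguously along the rim, the block $s_1$ is literally the rectangle $r_{\omega(\lambda)+1}$, which cannot be simultaneously shallow and steep.

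The step I expect to require the most care is the bookkeeping tying the blocks $s_1,\dots,s_t$ of the hook's own rim hook to the rectangles of the $b$-rectangular decomposition. The two groupings of the rim into runs of $b$ need not coincide and may be offset inside a row, so I must check that each object fed into Lemma \ref{lemma prohibiting shallow after steep rectangle} is genuinely a smallest rectangle on $b$ consecutive rim boxes, that the ``northeast corner empty to the right'' hypothesis holds (which is why I use the hand box when $m_*\ge 2$ and a decomposition rectangle when $m_*=1$), and that the strict southwest comparison of corners ($i_c>i_0$, $j_c<j_0$ in the notation of that lemma) is valid in each case. The contiguity of the $b$-rim within a single piece, together with the fact that $r_{\omega(\lambda)+1}$ begins exactly at row $\psi(\lambda)+1$, is what closes the boundary case $i=\psi(\lambda)+1$ and prevents any gap between the two decompositions.
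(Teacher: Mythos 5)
Your proof is correct and follows essentially the same route as the paper: decompose the rim hook of $H_{i,j}$ into $t$ blocks of $b$ consecutive rim boxes, use $l_{i,j}\le ta-1$ to force some block with $s_{m}^x\le a$, and feed it into Lemma \ref{lemma prohibiting shallow after steep rectangle} against a steep rectangle to manufacture a forbidden hook of shape \eqref{leg=ta}. The only difference is that the paper always pairs the shallow block with $r_{\omega(\lambda)+1}$ (whose strict-southwest relation to any shallow block is automatic, since when $i=\psi(\lambda)+1$ the first block coincides with $r_{\omega(\lambda)+1}$ and hence cannot be the shallow one), which makes your case split on $m_*$ and on $i=\psi(\lambda)+1$ unnecessary but not incorrect.
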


\begin{proof}
Let $H_{i,j}=tb$ for some $t$ and $i>\psi(\lambda)$. We assume for the sake of contradiction that $H_{i,j}$ is $(a,b)$-shallow. By Proposition \ref{proposition rectangular decomposition}, $r_{\omega(\lambda)+1}^x > a$. We show that among the $tb$ consecutive boxes on the rim that determine $H_{i,j}$, we can always find $b$ consecutive boxes which determine a rectangle $r_{\beta}$ such that $r_{\beta}^x \le a$. 

Number the $tb$ boxes in the rim corresponding to $H_{i,j}$ from northeast to southwest, and denote $s_\beta: \beta\in[1,t]$ to be the smallest rectangles each containing the $[(\beta-1)b+1 , \beta b]$ boxes in the set of $tb$ boxes. Since $H_{i,j}$ is $(a,b)$-shallow, $l_{i,j}\leq at-1$. The sum of the widths of all the rectangles cannot exceed $l_{i,j} + 1$, so we have $\sum_{\beta=1}^t (s_\beta^x-1)\leq at$. Hence, there exists a $\beta$, such that $s_{\beta}^x\leq a$. Applying Lemma \ref{lemma prohibiting shallow after steep rectangle} to $r_{\omega(\lambda)+1}$ and $r_{\beta}$ gives a hook of shape \eqref{leg=ta}, which is a contradiction. Therefore, there cannot be any $(a,b)$-shallow hooks strictly south of $x = \psi(\lambda).$
\end{proof}

\begin{Lemma}\label{lemma relating colseg with J}
Given a $\Cr_{a,b}$-valid and $b$-regular partition $\lambda$ containing no hook of shape \eqref{leg=ta}, we have the identity $$\lambda^{\Sr_{a,b}} = \lambda_{[1,\psi(\lambda)]}^{\J_b}\oplus\lambda_{[\psi(\lambda)+2,l(\lambda)]}.$$
\end{Lemma}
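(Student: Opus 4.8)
The plan is to compare the two sides row by row, using the block structure from Proposition \ref{proposition rectangular decomposition}: the rows $1,\dots,\psi(\lambda)$ of $\lambda$ are exactly the union of the shallow rectangles $r_1,\dots,r_{\omega(\lambda)}$, each of shape $a\times(b-a+1)$, while the first rectangle of height $>a$, namely $r_{\omega(\lambda)+1}$, begins in row $\psi(\lambda)+1$. First I would record the two local pictures. On the Mullineux side, truncating $\lambda$ to its top $\psi(\lambda)$ rows leaves the first $\omega(\lambda)$ rim-pieces unchanged (since $r_{\omega(\lambda)}$ already ends in row $\psi(\lambda)$), so the $b$-rim of $\lambda_{[1,\psi(\lambda)]}$ is $r_1\cup\dots\cup r_{\omega(\lambda)}$, a union of $\omega(\lambda)$ complete blocks of $b$ boxes. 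Hence $\phi(\lambda_{[1,\psi(\lambda)]})=\omega(\lambda)b$ is divisible by $b$, so $\delta=1$, and writing $\rho_i$ for the number of $b$-rim boxes in row $i$ one gets $(\lambda_{[1,\psi(\lambda)]}^{\J_b})_i=\lambda_i-\rho_i+1$ for every $i\le\psi(\lambda)$. On the regularization side, deletion of the first row gives $(\lambda^{\Sr_{a,b}})_i=\lambda_{i+1}+\sigma_{i+1}$, where $\sigma_r$ is the number of first-row boxes sliding into row $r$; by Lemma \ref{lemma: ladders gain where empty} each slid box lands at the northmost empty cell of its ladder, which has a box immediately north of it.

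Next I would dispose of the bottom block by showing that $\sigma_r=0$ for $r\ge\psi(\lambda)+2$, i.e. no first-row box slides below row $\psi(\lambda)+1$. Granting this, the rows $\psi(\lambda)+2,\dots,l(\lambda)$ of $\lambda$ are untouched and reappear, after deletion of the first row, as the tail $\lambda_{[\psi(\lambda)+2,l(\lambda)]}$, matching the summand $\oplus\,\lambda_{[\psi(\lambda)+2,l(\lambda)]}$. The bound on the gap row I would extract from Proposition \ref{proposition rectangular decomposition}, since every rectangle below row $\psi(\lambda)$ has height $>a$: a ladder whose northmost gap lay strictly inside this steep region would, paired with a shallow rectangle, realise the configuration of Lemma \ref{lemma prohibiting shallow after steep rectangle}, producing a hook of leg $a$ and arm $b-a-1$, which is a hook of shape \eqref{leg=ta} with $t=1$ and is forbidden.

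It remains to match rows $1\le i\le\psi(\lambda)$, which is the heart of the argument. Comparing the two pictures, the equality $(\lambda^{\Sr_{a,b}})_i=(\lambda_{[1,\psi(\lambda)]}^{\J_b})_i$ amounts to
\begin{equation*}
\sigma_{i+1}=(\lambda_i-\lambda_{i+1}+1)-\rho_i,
\end{equation*}
that is, the number of first-row boxes entering row $i+1$ equals the number of rim boxes of $\lambda$ in row $i$ that the $b$-rim omits. For rows $i$ interior to a shallow rectangle the $b$-rim exhausts the rim of row $i$, so both sides vanish; the content is at the foot rows $i=ta$ ($1\le t<\omega(\lambda)$), where the omitted boxes are exactly the cells of row $ta$ strictly west of the foot of $r_t$, namely columns $\lambda_{ta+1},\dots,f_t-1$ with $f_t$ the foot column, so their number is $f_t-\lambda_{ta+1}$. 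I would then verify that precisely $f_t-\lambda_{ta+1}$ first-row boxes slide into row $ta+1$, filling the cells $(ta+1,\lambda_{ta+1}+1),\dots,(ta+1,f_t)$: each such cell is the northmost gap of its ladder, whose higher points (in rows $1,a+1,\dots,(t-1)a+1$) lie in $\lambda$ by the shallow structure above it, and whose row-$1$ point is an honest first-row box because $f_t+t(b-a)=\lambda_{(t-1)a+1}+(t-1)(b-a)\le\lambda_1$. The single remaining case $i=\psi(\lambda)$ is the same count with $\lambda_{\psi(\lambda)+1}$ possibly zero, and is where the value $\delta=1$ established above is used.

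The principal obstacle is this top-block correspondence between slid boxes and omitted rim boxes. The definitions of $\J_b$ (excise the $b$-rim, then restore one box per row) and of $\Sr_{a,b}$ (delete the first row, pushing its boxes down the ladders) are a priori unrelated, and the match is further offset by the one-row index shift; the delicate work is the induction on $t$ that keeps the relevant ladders full through the stack of shallow rectangles down to row $ta$, together with the bookkeeping at the final shallow row $\psi(\lambda)$. Once the displayed identity is established for all $i$, assembling the three ranges of rows yields $\lambda^{\Sr_{a,b}}=\lambda_{[1,\psi(\lambda)]}^{\J_b}\oplus\lambda_{[\psi(\lambda)+2,l(\lambda)]}$, and in particular the $b$-regularity of $\lambda$ guarantees that $\J_b$ and the whole comparison are defined.
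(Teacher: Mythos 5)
Your overall strategy --- confine all sliding to the first $\psi(\lambda)+1$ rows, then match, row by row, the first-row boxes deposited in row $i+1$ against the rim boxes of row $i$ that the $b$-rim omits --- is the same as the paper's (the paper packages the landing sites as the set $\Gamma(\lambda)$ of boxes immediately southeast of $\{\text{rim}\}\setminus\{b\text{-rim}\}$, but the counts agree with yours), and your top-block bookkeeping, including the observation that $\phi(\lambda_{[1,\psi(\lambda)]})=\omega(\lambda)b$ forces $\delta=1$ and the verification that the relevant ladders stay full down to row $ta+1$, is sound.

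The genuine gap is your justification that no first-row box slides below row $\psi(\lambda)+1$. You propose to pair a ladder whose northmost gap lies in the steep region with a shallow rectangle and invoke Lemma \ref{lemma prohibiting shallow after steep rectangle} to manufacture a forbidden hook of shape \eqref{leg=ta} with $t=1$. That lemma requires a rectangle of $x$-dimension $\le a$ lying strictly \emph{southwest} of one of $x$-dimension $>a$, whereas the only shallow rectangles you have in hand, $r_1,\dots,r_{\omega(\lambda)}$, lie strictly \emph{northeast} of the steep region, and a gap on a ladder is not itself a rectangle of the required kind. Even if one builds a height-$\le a$ rectangle out of the $b$ rim boxes ending just above the gap, the first row a first-row ladder can reach below row $\psi(\lambda)+1$ is row $\psi(\lambda)+a+1$, and since $r_{\omega(\lambda)+1}^x>a$ that rectangle is not strictly south of $r_{\omega(\lambda)+1}$, so the hypotheses still fail. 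The contradiction one actually gets is of a different nature: the paper shows the single ladder $\ell_{\psi(\lambda)+1,\lambda_{\psi(\lambda)+1}}$ is entirely contained in $\lambda$, because a northmost gap $(i,j)$ on it, combined with Lemma \ref{lemma: ladders gain where empty}, produces a hook $H_{\psi(\lambda)+1,j}$ of size $tb$ with leg $ta-1$ and arm $t(b-a)$ --- an $(a,b)$-\emph{shallow} hook in a row $>\psi(\lambda)$, contradicting Lemma \ref{lemma later hooks not shallow} (not a hook of shape \eqref{leg=ta}). Fullness of that one ladder then forces every first-row ladder either to have a gap in row $\le\psi(\lambda)+1$ or to have all of its lower points weakly northwest of points of the full ladder, hence in $\lambda$; this is the confinement you need. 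Replace your appeal to Lemma \ref{lemma prohibiting shallow after steep rectangle} at this step by an argument through Lemma \ref{lemma later hooks not shallow}, and the rest of your proof goes through.
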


\begin{Example}
We illustrate Lemma \ref{lemma relating colseg with J} on $\lambda = (13,10,9,7,5,2,2,1)$ with $a=2$ and $b=5$. In the left half of Figure \ref{fig38}, we are applying $\Sr_{2,5}$. The rectangles of the $5$-rectangular decomposition are colored yellow, and the arrows indicate where boxes in the first row shift to. 

On the right half, we are applying $\J_5$. The $5$-rim is labeled, and the boxes that are shaded are removed. For the sake of illustration, we remove the south-most box in each column of $\lambda_{[\psi(\lambda)+1,l(\lambda)]}$, which is equivalent to removing $\lambda_{\psi(\lambda)+1}$. Visually, it is easy to see that both operations gives the partition $(10,10,7,6,2,2,1)$.
\end{Example}

\begin{center}
\begin{figure}[h]
\begin{tikzpicture}[scale=0.5]
\draw (15,0) -- (15,8);
\draw (16,0) -- (16,8);
\draw (17,1) -- (17,8);
\draw (18,3) -- (18,8);
\draw (19,3) -- (19,8);
\draw (20,3) -- (20,8);
\draw (21,4) -- (21,8);
\draw (22,4) -- (22,8);
\draw (23,5) -- (23,8);
\draw (24,5) -- (24,8);
\draw (25,6) -- (25,8);
\draw (26,7) -- (26,8);
\draw (27,7) -- (27,8);
\draw (28,7) -- (28,8);
\draw (15,0) -- (16,0);
\draw (15,1) -- (17,1);
\draw (15,2) -- (17,2);
\draw (15,3) -- (20,3);
\draw (15,4) -- (22,4);
\draw (15,5) -- (24,5);
\draw (15,6) -- (25,6);
\draw (15,7) -- (28,7);
\draw (15,8) -- (28,8);

\foreach \x/\y/\m in {+15.5/+0.5/$3$,+15.5/+1.5/$2$,+16.5/+1.5/$1$,+16.5/+2.5/$5$,+16.5/+3.5/$4$,+17.5/+3.5/$3$,18.5/+3.5/$2$,19.5/+3.5/$1$,+20.5/+4.5/$5$,+21.5/+4.5/$4$,+21.5/+5.5/$3$,22.5/+5.5/$2$,23.5/+5.5/$1$,+24.5/+6.5/$5$,+24.5/+7.5/$4$,+25.5/+7.5/$3$,26.5/+7.5/$2$,27.5/+7.5/$1$,} 
    \node at (\x,\y) {\m};
%more
\draw[pattern=north west lines, pattern color=black] (25,8) rectangle (28,7);
\draw[pattern=north west lines, pattern color=black] (22,6) rectangle (24,5);
\draw[pattern=north west lines, pattern color=black] (21,5) rectangle (22,4);
\draw[pattern=north west lines, pattern color=black] (17,4) rectangle (20,3);
\draw[pattern=north west lines, pattern color=black] (16,2) rectangle (17,1);
\draw[pattern=north west lines, pattern color=black] (15,1) rectangle (16,0);

\draw[densely dotted] (9,7) -- (10,7) -- (10,6) -- (9,6);
\draw[densely dotted] (5,5) -- (6,5) -- (6,4) -- (5,4);
\fill[black!30!white] (11.02,9.02) rectangle (12.98,7.98);
\draw[->] (13,8) -- (10,6);
\draw[->,black] (12,8) -- (6,4);
\draw[pattern=north west lines, pattern color=black] (0,9) rectangle (11,8);

\draw (0,1) -- (0,9);
\draw (1,1) -- (1,9);
\draw (2,2) -- (2,9);
\draw (3,4) -- (3,9);
\draw (4,4) -- (4,9);
\draw (5,4) -- (5,9);
\draw (6,5) -- (6,9);
\draw (7,5) -- (7,9);
\draw (8,6) -- (8,9);
\draw (9,6) -- (9,9);
\draw (10,7) -- (10,9);
\draw (11,8) -- (11,9);
\draw (12,8) -- (12,9);
\draw (13,8) -- (13,9);
\draw (0,1) -- (1,1);
\draw (0,2) -- (2,2);
\draw (0,3) -- (2,3);
\draw (0,4) -- (5,4);
\draw (0,5) -- (7,5);
\draw (0,6) -- (9,6);
\draw (0,7) -- (10,7);
\draw (0,8) -- (13,8);
\draw (0,9) -- (13,9);
\draw [fill=yellow, opacity=0.2] (13,9) rectangle (9,7);
\draw [fill=yellow, opacity=0.2] (5,5) rectangle (9,7);
\draw [fill=yellow, opacity=0.2] (5,5) rectangle (1,3);
\draw [fill=yellow, opacity=0.2] (2,3) rectangle (0,1);
\end{tikzpicture}
\caption{Illustration of Lemma \ref{lemma relating colseg with J} on $\lambda = (13,10,9,7,5,2,2,1)$ with $a=2$ and $b=5$. The shaded boxes are removed in the corresponding operator.}
\label{fig38}
\end{figure}
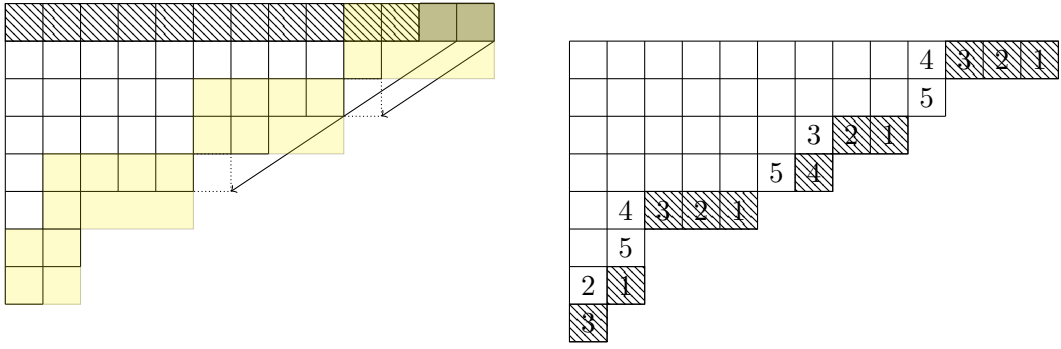
\end{center}

\begin{proof}[Proof of Lemma \ref{lemma relating colseg with J}]
We first claim that $\ell_{\psi(\lambda)+1,\lambda_{\psi(\lambda)+1}}\subset\lambda$, i.e. the ladder passing through the box $(\psi(\lambda)+1,\lambda_{\psi(\lambda)+1})$ is completely contained in $\lambda$. (if $\psi(\lambda)+1>l(\lambda)$, the argument below still holds by taking $\lambda_{\psi(\lambda)+1}=0$). 

By Proposition \ref{proposition rectangular decomposition}, all boxes on $\ell_{\psi(\lambda)+1,\lambda_{\psi(\lambda)+1}}$ that lie strictly north of $(\psi(\lambda)+1,\lambda_{\psi(\lambda)+1})$ are contained in $\lambda$. If $\lambda_{\psi(\lambda)+1}=0$, then the claim is immediate. Otherwise suppose on the contrary that there are boxes south of $(\psi(\lambda)+1,\lambda_{\psi(\lambda)+1})$ that are not contained in $\lambda$. Then consider the north-most box $(i,j)\in \ell_{\psi(\lambda)+1,\lambda_{\psi(\lambda)+1}}\setminus\lambda$. Since $r_1^x = \ldots = r_{\omega(\lambda)}^x = a,$ $\ell_{\psi(\lambda)+1,\lambda_{\psi(\lambda)+1}}$ passes through a box in the first row. By Lemma \ref{lemma: ladders gain where empty}, the box $(i-1,j)\in \lambda$. Then the hook $H_{\psi(\lambda)+1,j}=tb$, for some $t\in\mathbb{N}_{>0}$, must have $l_{\psi(\lambda)+1,j}=ta-1$, hence it is $(a,b)$-shallow, which contradicts Lemma \ref{lemma later hooks not shallow}. Thus, our initial assumption was false, and we conclude that $\ell_{\psi(\lambda)+1,\lambda_{\psi(\lambda)+1}}\subset\lambda$.

In the case where $\omega(\lambda)=\psi(\lambda)=0$, the ladder $\ell_{1,\lambda_1}$ is completely full so no boxes move when applying $\Sr_{a,b}$. Hence $\lambda^{\Sr_{a,b}}=\lambda_{[2,l(\lambda)]}$, which is the desired identity.

In the case where $\omega(\lambda)\geq1$, and $\psi(\lambda)\geq a$, we know from above that no changes occur south of row $\psi(\lambda)+1$ since the ladder $\ell_{\psi(\lambda)+1,\lambda_{\psi(\lambda)+1}}$ passes through the first row and has all its boxes contained in $\lambda$. So we must have $$\lambda^{\Sr_{a,b}}_{[\psi(\lambda)+1,l(\lambda^{\Sr_{a,b}})]} = \lambda_{[\psi(\lambda)+2,l(\lambda)]}. $$

Consider the description of the $b$-rectangular decomposition stated in Proposition \ref{proposition rectangular decomposition}. For $\alpha=1,\ldots,\omega(\lambda)$, the northeast-most box in $r_\alpha$ (which is contained in $\lambda$), has coordinates $(1+(\alpha-1)a,\lambda_{1+(\alpha-1)a})$. From the shape of the $r_\alpha$'s given by Proposition \ref{proposition rectangular decomposition}, we know that $$\lambda_{1+\alpha a}\leq \lambda_{1+(\alpha-1)a}-(b-a),\quad \alpha=1,\ldots\omega(\lambda).$$
Denote $$\delta_{\alpha}:=\lambda_{1+(\alpha-1)a}-(b-a)-\lambda_{1+\alpha a},\quad \alpha=1,\ldots\omega(\lambda).$$
We define the $b$-gaps of $\lambda$ as a collection of boxes in the first $\psi(\lambda)+1$ rows of $\lambda$ and immediately southeast to a box in $\{\text{rim of }\lambda\}\setminus \{b\text{-rim of }\lambda\}$ (see Figure \ref{fig310}). More specifically, we let
$$G_{\alpha}(\lambda)=\{(a\alpha+1,j): a\in [1,\omega(\lambda)],(a\alpha-1,j-1)\in \text{rim of }\lambda\setminus b\text{-rim of }\lambda\}.$$

\begin{center}
\begin{figure}
\begin{tikzpicture}[scale=0.5]
\draw (0,0) -- (0,4);
\draw (1,0) -- (1,4);
\draw (2,1) -- (2,4);
\draw (3,1) -- (3,4);
\draw (4,2) -- (4,4);
\draw (5,2) -- (5,4);
\draw (6,3) -- (6,4);
\draw (7,3) -- (7,4);
\draw (8,3) -- (8,4);
\draw (0,0) -- (1,0);
\draw (0,1) -- (3,1);
\draw (0,2) -- (5,2);
\draw (0,3) -- (8,3);
\draw (0,4) -- (8,4);
\draw [densely dotted] (4,1)--(5,1)--(5,2)--(4,2)--(4,1);
\draw [densely dotted] (3,1)--(4,1)--(4,2)--(3,2)--(3,1);
\draw (8,3) -- (2,-1) ;
\draw (7,3) -- (1,-1) ;
\draw [fill=yellow, opacity=0.2] (4,3) rectangle (2,2);
\node at (4,-0.5) {$\ell_{1,8}$};
\node at (0.5,-0.5) {$\ell_{1,7}$};
\draw[->,black] (2.5,2.5) -- (3.5,1.5);
\draw[->,black] (3.5,2.5) -- (4.5,1.5);
\end{tikzpicture}
\caption{Illustration of $G_1$ for $(8,5,3,1)$ when $a = 2$ and $b = 5$. The boxes in $\{\text{rim of }\lambda\}\setminus \{5\text{-rim of }\lambda\}$ are colored in yellow and each point to a corresponding box of $G_1$. Notice that the boxes in the first row of $\ell_{1,7}$ and $\ell_{1,8}$ slide exactly to the boxes of $G_1$ when applying $\Sr_{2,5}$.}
\label{fig310}
\end{figure}
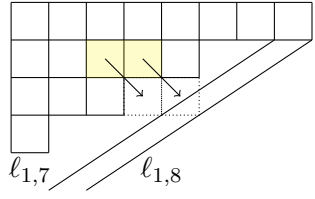
\end{center}

By Proposition \ref{proposition rectangular decomposition}, ladders passing through boxes in the first row also pass through some box in the first row of $r_\alpha: \alpha\in[1,\omega(\lambda)]$ and the row $\psi(\lambda)+1$. Hence, for $\alpha\in[1,\omega(\lambda)]$, $G_{\alpha}(\lambda)$ are exactly the $\delta_{\alpha}$ boxes $(1+\alpha a, \lambda_{1+\alpha a}+k_\alpha): k_\alpha\in[1,\delta_\alpha]$ that become occupied under $\Sr_{a,b}$. Applying $\Sr_{a,b}$ to $\lambda$ precisely slides boxes $(1,\lambda_1),\ldots, (1,\lambda_1-\sum_{\beta=1}^{\omega(\lambda)}\delta_\beta)$ in order to the boxes in $G_1(\lambda)\sqcup\ldots\sqcup G_{\alpha}$ when sorted in order from northeast to southwest, and $\Sr_{a,b}$ finishes by removing what remains in the first row.

Using the definition of $\J_b$, we know that $\lambda^{\J_b}_i=\lambda_{i+1}$ for all $i\in\{1,\ldots,\psi(\lambda)\}: i\neq \beta a$ for some $\beta\in [1,\omega(\lambda)]$. And for all $\beta\in [1,\omega(\lambda)]$, we have $\lambda^{\J_b}_{\beta a}=\lambda_{\beta a+1}+\delta_{\beta}$. Thus, we conclude that $\lambda^{\Sr_{a,b}}_{[1,\psi(\lambda)]} = \lambda^{\J_b}_{[1,\psi(\lambda)]}.$

Since they agree in every part, $\lambda^{\Sr_{a,b}} = \lambda_{[1,\psi(\lambda)]}^{\J_b}\oplus\lambda_{[\psi(\lambda)+2,l(\lambda)]},$ as desired.
\end{proof}

\begin{Remark}
Before proving Theorem \ref{theorem generalizing BOX}, we again emphasize that in all previous results, we have only used the conditions that $\lambda$ is $\Cr_{a,b}$-valid and $\lambda$ contains no hooks of shape \eqref{leg=ta}. In the next section, we will claim that these results can be used towards proving a generalization of Theorem \ref{theorem generalizing BOX}. 
\end{Remark}

To finally prove Theorem \ref{theorem generalizing BOX}, we now for the first time use the full extent of the condition that $\lambda$ contains no $(a,b)$-shallow hooks. The theorem is restated below for reference:

\mainthm*

\begin{proof}[Proof of Theorem \ref{theorem generalizing BOX}]
First, we note that $\lambda$ is $b$-regular. If on the contrary, there exists $i$ such that $\lambda_i = \ldots = \lambda_{i+b-1}>\lambda_b$, then the hook $H_{i,\lambda_i}$ is divisible by $b$ and has leg $b-1$ and arm $0$, which is not $(a,b)$-shallow. Thus, $\lambda$ must be $b$-regular.

We proceed by induction on $|\lambda|.$ In the base case, we have $(1)^{\M_b\Tr} = (1)^{\Cr_{a,b}} = (1)$ for all $a,b$. 

Consider a partition $\lambda$ that is $\Cr_{a,b}$-valid with all hooks divisible by $b$ being $(a,b)$-shallow. We assume that $\mu^{\M_b\Tr}=\mu^{\Cr_{a,b}}$ holds for all $\mu$ with the same constraints and $|\mu|<|\lambda|$.

We claim that the $b$-rim strictly south of row $\psi(\lambda)$ forms a single $b'$-segment. If there is a $b$-segment strictly south of $x = \psi(\lambda)$, it must be $(a,b)$-shallow by the conditions of the theorem. However, Lemma \ref{lemma later hooks not shallow} implies that such an $(a,b)$-shallow hook cannot exist. Thus, the $b$-rim of $\lambda$ strictly south of $x = \psi(\lambda)$ is a single $b'$-segment. Hence $\lambda^{\J_b}_{[\psi(\lambda)+1,l(\lambda^{\J_b})]} = \lambda_{[\psi(\lambda)+2,l(\lambda)]}$. Lemma \ref{lemma relating colseg with J} then implies $\lambda^{\J_b} = \lambda^{\Sr_{a,b}}$, so $$(\lambda^{\Cr})_1 = |\lambda|-|\lambda^{\Sr_{a,b}}| = |\lambda|-|\lambda^{\J_b}| = (\lambda^{\M_b\Tr})_1=|\{y: 1\leq y\leq \lambda_1,\ell_{1,y}\subset\lambda\}|.$$

In order to apply the inductive hypothesis, it remains to show that $\lambda^{\J_b} = \lambda^{\Sr_{a,b}}$ is $\Cr_{a,b}$-valid and has all hooks divisible by $b$ being $(a,b)$-shallow. The $\Cr_{a,b}$-validity is clear since $\Cr_{a,b}$ is defined recursively by $\Sr_{a,b}$ via equation  \eqref{eq: Cr recursion} and $\lambda$ is $\Cr_{a,b}$-valid to begin with. 

It remains to show that all hooks divisible by $b$ in $\lambda^{\Sr_{a,b}}$ are $(a,b)$-shallow. Consider all hooks $H_{i,j}({\lambda^{\Sr_{a,b}}})=tb$ for some $t\in\mathbb{N}_{>0}$ and compare them with $H_{i+1,j}({\lambda})$ (recall that when performing $\Sr_{a,b}$, the first row is removed, so there is index shifting). If these two hooks have the same dimension, then $H_{i,j}({\lambda^{\Sr_{a,b}}})$ is $(a,b)$-shallow because all hooks divisible by $b$ in $\lambda$ are $(a,b)$-shallow. They can only be different in the following three situations:
\begin{enumerate}
\item The foot box $\mathfrak{f}_{i,j}(\lambda^{\Sr_{a,b}})=(i',j)$ comes from a box in the first row of $\lambda$, but the hand box does not.

Suppose on the contrary that $H_{i,j}({\lambda^{\Sr_{a,b}}})$ is not $(a,b)$-shallow, the corresponding leg $l_{i,j}({\lambda^{\Sr_{a,b}}})\geq at$. Then $(i'-ta,j+t(b-a))$, which is on the ladder $\ell_{i',j}$, is not contained in $\lambda^{\Sr_{a,b}}$ and north of $(i',j)$, contradicting the definition of $\Sr_{a,b}$. Thus, $H_{i,j}({\lambda^{\Sr_{a,b}}})$ must be $(a,b)$-shallow.

\item The hand box $\mathfrak{h}_{i,j}(\lambda^{\Sr_{a,b}})=(i,j')$ comes from a box in the first row of $\lambda$, but the foot box does not.

Since boxes only slide to rows that are multiples of $a$ under $\Sr_{a,b}$, in this case $i=\alpha a-1$ for $\alpha\geq 1$. Suppose on the contrary $H_{i,j}({\lambda^{\Sr_{a,b}}})$ is not $(a,b)$-shallow, then we claim that $H_{i-a+1,j}(\lambda)$ is also divisible by $b$ and not $(a,b)$-shallow either. 

$H_{i-a+1,j}(\lambda)$ is also divisible by $b$ is clear because the rectangle in the $b$-rectangular decomposition of $\lambda$ occupying rows $i-a+1$ to $i$ has $r^x = a$ and $r^y=b-a+1$ as shown in Figure \ref{figproof12sufficiency}. Then $H_{i-a+1,j}(\lambda)=(t+1)b$.

\begin{center}
\begin{figure}[h]
\begin{tikzpicture}[scale=0.5]
\draw[brown] (0,0) rectangle (5,3);
\draw[brown] (7,3) rectangle (12,6);
\draw (3,3) rectangle (8,2);
\draw (3,-1) rectangle (4,6);
\draw (3,5) rectangle (12,6);
\draw[line width=1pt] (3,-1)--(4,-1)--(4,2)--(8,2)--(8,3)--(3,3)--(3,-1);
%\draw[pattern=north west lines, pattern color=black] (3,-1) rectangle (4,3);
\draw[pattern=north west lines, pattern color=black] (5,2) rectangle (8,3);
\draw [fill=red, opacity=0.2] (3,-1) rectangle (4,6);
\draw [fill=red, opacity=0.2] (4,5) rectangle (12,6);
\draw[->] (2.7,2.5)--(3.3,2.5);
%\draw[->] (2.7,5.5)--(3.3,5.5);
\node at (2,2.4) {$(i,j)$};
\end{tikzpicture}
\caption{The hand box $\mathfrak{h}_{i,j}(\lambda^{\Sr_{a,b}})=(i,j')$ comes from a box in the first row of $\lambda$, but the foot box does not. The boxes of the $b$-gap are shaded in the figure, which comes from the first row of $\lambda$. $H_{i,j}(\lambda^{\Sr_{a,b}})$ is outlined by black lines and $H_{i-a+1,j}(\lambda)$ is colored in red.}
\label{figproof12sufficiency}
\end{figure}
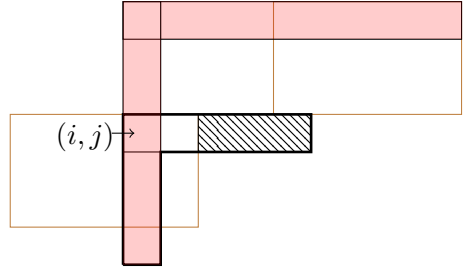
\end{center}

Then, $l_{i-a+1,j}(\lambda)=l_{i,j}(\lambda^{\Sr_{a,b}})+a\geq (t+1)a$, so $H_{i-a+1,j}(\lambda)=(t+1)b$ is not $(a,b)$-shallow, a contradiction. Thus, we conclude $H_{i,j}(\lambda^{\Sr_{a,b}})$ must be $(a,b)$-shallow.

\item Both the foot box $\mathfrak{f}_{i,j}(\lambda^{\Sr_{a,b}})=(i',j)$ and the hand box $\mathfrak{h}_{i,j}(\lambda^{\Sr_{a,b}})=(i,j')$ come from boxes in the first row of $\lambda$.

Proposition \ref{proposition rectangular decomposition} implies that in this case both $i$ and $i'$ are multiples of $a$, so we let $l_{i,j}(\lambda^{\Sr_{a,b}})=i'-i= a\alpha$. But we also have $a_{i,j}(\lambda^{\Sr_{a,b}})=j'-j\geq (b-a)\alpha+1$ since there are $\alpha$ rectangles between $y = j$ and $y=j'$. In particular, these inequalities imply that $H_{i,j}(\lambda^{\Sr_{a,b}})$ must be $(a,b)$-shallow.

\end{enumerate}
Now we can apply the inductive hypothesis: $\lambda^{\J_b\M_b\Tr} = \lambda^{\Sr_{a,b}\Cr_{a,b}}.$ By Equation \eqref{eq: Cr recursion} and Equation \eqref{eq: M recursion}, $\lambda^{\Cr_{a,b}} = (|\lambda|-|\lambda^{\Sr_{a,b}}|)\oplus \lambda^{\Sr_{a,b}\Cr_{a,b}}$ and $\lambda^{\M_b\Tr} = (|\lambda|-|\lambda^{\J_b}|)\oplus \lambda^{\J_b\M_b\Tr},$ so we find that $\lambda^{\M_b\Tr} = \lambda^{\Cr_{a,b}}$. 
\end{proof}

\begin{Remark}
Note that the proofs in this section do not require $a$ and $b$ to be co-prime.
\end{Remark}

\section{Iwahori-Hecke Algebras and Conjectures}
In this section we state the following two conjectures, which have been proven in the case of $a=1$ by Bessenrodt, Olsson, and Xu in \cite{bessenrodt1999properties} and Fayers in \cite{fayers2008regularisation} respectively:
\begin{Conjecture}\label{reverse1.2}
Given positive co-prime integers $a<b$ and a $b$-regular, $\Cr_{a,b}$-valid partition $\lambda$, if $\lambda^{\M_b\Tr} = \lambda^{\Cr_{a,b}}$, then all hooks $H_{i,j}$ in $\lambda$ with $b\mid H_{i,j}$ must satisfy Equation \eqref{equation: shallow}:
\begin{equation*}
    \left(\frac{b}{a}-1\right)l_{i,j} < a_{i,j} +1.
\end{equation*}
\end{Conjecture}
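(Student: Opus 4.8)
The plan is to establish the contrapositive by induction on $|\lambda|$: assuming the statement for all partitions of smaller size, I will show that if $\lambda$ carries at least one $b$-divisible hook that is \emph{not} $(a,b)$-shallow, then $\lambda^{\M_b\Tr}\neq\lambda^{\Cr_{a,b}}$. The hypothesis that the two images coincide already forces $\lambda$ to be $b$-regular (so $\lambda^{\M_b\Tr}$ is defined) and $\Cr_{a,b}$-valid (so $\lambda^{\Cr_{a,b}}$ is a genuine partition), placing us exactly in the setting of Theorem \ref{theorem generalizing BOX} with the shallowness hypothesis traded for equality of the two images. The driving mechanism is the pair of recursions \eqref{eq: M recursion} and \eqref{eq: Cr recursion}: from $\lambda^{\M_b\Tr}=(|\lambda|-|\lambda^{\J_b}|)\oplus\lambda^{\J_b\M_b\Tr}$ and $\lambda^{\Cr_{a,b}}=(|\lambda|-|\lambda^{\Sr_{a,b}}|)\oplus\lambda^{\Sr_{a,b}\Cr_{a,b}}$, equality of the two partitions forces equal first parts, $|\lambda^{\J_b}|=|\lambda^{\Sr_{a,b}}|$, and equal tails, $\lambda^{\J_b\M_b\Tr}=\lambda^{\Sr_{a,b}\Cr_{a,b}}$. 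The strategy is to contradict one of these two.

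As a preliminary reduction I would first treat partitions possessing a hook of shape \eqref{leg=ta}, since these are non-shallow yet are excluded from the hypotheses of Proposition \ref{proposition rectangular decomposition} and Lemmas \ref{lemma later hooks not shallow}--\ref{lemma relating colseg with J}; the goal is to show directly that such a hook forces $|\lambda^{\J_b}|\neq|\lambda^{\Sr_{a,b}}|$. Assuming henceforth that $\lambda$ has no hook of shape \eqref{leg=ta}, the full $b$-rectangular-decomposition machinery of Section 3 becomes available. I would then prove a \textbf{first-part comparison}: using Lemma \ref{lemma prohibiting shallow after steep rectangle}, a non-shallow $b$-divisible hook meeting the rectangles $r_1,\dots,r_{\omega(\lambda)}$ (equivalently, lying in rows at most $\psi(\lambda)$) introduces a thin rectangle of $x$-dimension at most $a$ after a fat one, and this disorder should make the size of the truncated $b$-rim differ from the size of the first-row removal performed by $\Sr_{a,b}$. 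Concretely, the target is an inequality between $|\lambda^{\J_b}|$ and $|\lambda^{\Sr_{a,b}}|$ that becomes strict exactly when such a top hook violates \eqref{equation: shallow}; this is essentially Lemma \ref{lemma later hooks not shallow} and the sufficiency analysis inside the proof of Theorem \ref{theorem generalizing BOX}, run backwards. Any offending hook in rows at most $\psi(\lambda)$ then contradicts $|\lambda^{\J_b}|=|\lambda^{\Sr_{a,b}}|$.

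It remains to handle the case that every non-shallow $b$-divisible hook lies strictly below row $\psi(\lambda)$, where I would pass to the tail. The aim is to upgrade the numerical equality $|\lambda^{\J_b}|=|\lambda^{\Sr_{a,b}}|$ to the shape equality $\lambda^{\J_b}=\lambda^{\Sr_{a,b}}$ (the content of Lemma \ref{lemma relating colseg with J}, but there deduced from the no-\eqref{leg=ta} hypothesis together with shallowness rather than from equal first parts), and then to check that an offending hook of $\lambda$ descends to an offending $b$-divisible hook of $\lambda^{\J_b}=\lambda^{\Sr_{a,b}}$, which is strictly smaller. The inductive hypothesis would then give $\lambda^{\J_b\M_b\Tr}\neq\lambda^{\Sr_{a,b}\Cr_{a,b}}$, contradicting equality of the tails. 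That both maps fix the $b$-core---Lemma \ref{reg preserves core} for $\Cr_{a,b}$ and the result of \cite{ford1997proof} for $\M_b$---together with the coprimality of $a$ and $b$, so that each ladder is the full set of lattice points on its underlying line, is what should make the upgrade from equal sizes to equal shapes possible; this is precisely where coprimality, unneeded in Theorem \ref{theorem generalizing BOX}, enters.

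The hard part, and the reason the statement remains conjectural, is the coupling of these steps. The forward theorem obtained $\lambda^{\J_b}=\lambda^{\Sr_{a,b}}$ for free from shallowness, whereas here it must be extracted from the bare equality of the two images; making the first-part comparison \emph{two-sided}---an inequality valid for every admissible $\lambda$ together with a sharp ``equality iff shallow'' criterion---and guaranteeing that a non-shallow hook is merely relocated, never destroyed, by the passage to $\lambda^{\J_b}$ are the two places where I expect the genuine difficulty to lie, very likely demanding an idea beyond the $a=1$ arguments of Bessenrodt, Olsson and Xu in \cite{bessenrodt1999properties}.
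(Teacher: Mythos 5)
Your proposal is a plan rather than a proof, and you say so yourself; the two places you flag as ``the hard part'' are genuine gaps, not technicalities. First, the inductive step cannot close as stated: from $\lambda^{\M_b\Tr}=\lambda^{\Cr_{a,b}}$ and the recursions \eqref{eq: M recursion}, \eqref{eq: Cr recursion} you do get $|\lambda^{\J_b}|=|\lambda^{\Sr_{a,b}}|$ and $\lambda^{\J_b\M_b\Tr}=\lambda^{\Sr_{a,b}\Cr_{a,b}}$, but the inductive hypothesis applies to a single partition $\mu$ satisfying $\mu^{\M_b\Tr}=\mu^{\Cr_{a,b}}$; unless you first prove $\lambda^{\J_b}=\lambda^{\Sr_{a,b}}$ as partitions (not merely as sizes), the equality of tails relates two possibly different partitions and the induction gives nothing. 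In the forward direction (Theorem \ref{theorem generalizing BOX}) that identity is exactly what Lemma \ref{lemma relating colseg with J} extracts \emph{from shallowness}; extracting it from the bare hypothesis is the whole problem, and you offer no mechanism for doing so. Second, your ``first-part comparison'' (a one-sided inequality between $|\lambda^{\J_b}|$ and $|\lambda^{\Sr_{a,b}}|$ that is strict exactly when a high non-shallow hook exists) and the claim that an offending hook survives the passage to $\lambda^{\J_b}$ are stated as targets, not proved; neither follows from running Lemma \ref{lemma prohibiting shallow after steep rectangle} or Lemma \ref{lemma later hooks not shallow} ``backwards,'' since those lemmas carry the no-\eqref{leg=ta} and validity hypotheses and produce the existence of certain hooks, not size comparisons of rims.

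The paper does not attempt this combinatorial route at all. Its proof of Conjecture \ref{reverse1.2} is conditional on Conjecture \ref{Fayers2.2} (the proposed generalization of Fayers' theorem on $q$-decomposition numbers): from $\lambda^{\M_b\Tr}=\lambda^{\Cr_{a,b}}$ one gets $d_{\lambda^{\Tr},\lambda^{\M_b}}(q)=d_{\lambda^{\Tr},\lambda^{\Tr\Reg_{a,b}}}(q)$; Conjecture \ref{Fayers2.2} evaluates the right-hand side as $q^{s(\lambda^{\Tr})}$, where $s(\lambda^{\Tr})$ equals the number of $(a,b)$-shallow $b$-divisible hooks of $\lambda$, while Theorem \ref{LLTcoefficients}(iii) evaluates the left-hand side as $q^{|\lambda|_b}$; equating exponents forces every $b$-divisible hook of $\lambda$ to be shallow. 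This localizes all the difficulty in a single statement about canonical basis coefficients instead of a rim-by-rim induction. If you wish to pursue your combinatorial approach, the honest statement of what is missing is a two-sided, hypothesis-free comparison of $\J_b$ with $\Sr_{a,b}$; as written, your argument does not prove the conjecture.
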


\begin{Conjecture}
\label{theorem generalizing Fayers}
Given positive integers $a$, $b$ with $2a<b$ and a partition $\lambda$ that is both $\Reg_{a,b}$-valid and $\Cr_{a,b}$-valid, we have the following:
\begin{enumerate}
\item If all hooks $H_{i,j}$ in $\lambda$ with $b\mid H_{i,j}$ satisfy:
\begin{equation}\label{equation: steep and shallow}
   \text{either} \quad\left(\frac{b}{a}-1\right)l_{i,j} < a_{i,j} +1\quad \text{or}\quad \left(\frac{b}{a}-1\right)a_{i,j} < l_{i,j} +1,
\end{equation}
then $\lambda^{\Reg_{a,b}\M_b} = \lambda^{\Tr\Reg_{a,b}}$ (i.e. $\lambda^{\Reg_{a,b}\M_b\Tr} = \lambda^{\Cr_{a,b}})$.
\item If $a$ and $b$ are co-prime, and $\lambda$ satisfies $\lambda^{\Reg_{a,b}\M_b} = \lambda^{\Tr\Reg_{a,b}}$, then all hooks divisible by $b$ satisfy Equation \eqref{equation: steep and shallow}.
\end{enumerate}
\end{Conjecture}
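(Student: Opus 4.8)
Since $\Reg_{a,b}=\Tr\,\Cr_{a,b}\,\Tr$, the equality $\lambda^{\Reg_{a,b}\M_b}=\lambda^{\Tr\Reg_{a,b}}$ in part (i) is equivalent to $(\lambda^{\Reg_{a,b}})^{\M_b\Tr}=\lambda^{\Cr_{a,b}}$, which is the form I would prove. The hypothesis \eqref{equation: steep and shallow} is invariant under transpose, since transposition exchanges arm and leg and hence interchanges the $(a,b)$-shallow and $(a,b)$-steep conditions; Theorem \ref{theorem generalizing BOX} anchors the shallow regime, and the steep regime is its mirror image, but a single $\lambda$ may carry both kinds of hooks at once, which is the source of the difficulty. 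The assumption $2a<b$ ensures, via Lemma \ref{simultaneously shallow/steep}, that no $b$-divisible hook is both shallow and steep, so the two regimes are genuinely disjoint; I expect this separation to be essential.

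\textbf{Induction and the key commutation lemma.} I would argue by induction on $|\lambda|$, paralleling the proof of Theorem \ref{theorem generalizing BOX} but carrying $\Reg_{a,b}$ through the recursion. Note that $\lambda^{\Reg_{a,b}}$ is $(a,b)$-regular, hence $b$-regular, so $\M_b\Tr$ and $\J_b$ apply to it. Combining the Mullineux recursion \eqref{eq: M recursion} for $\lambda^{\Reg_{a,b}}$ with the column-regularization recursion \eqref{eq: Cr recursion} for $\lambda$, and using $|\lambda^{\Reg_{a,b}}|=|\lambda|$, the whole statement reduces to a single identity, the analogue of Lemma \ref{lemma relating colseg with J}:
\[
(\lambda^{\Reg_{a,b}})^{\J_b}=(\lambda^{\Sr_{a,b}})^{\Reg_{a,b}}.
\]
This identity simultaneously matches the first rows, $|\lambda^{\Reg_{a,b}}|-|(\lambda^{\Reg_{a,b}})^{\J_b}|=|\lambda|-|\lambda^{\Sr_{a,b}}|=(\lambda^{\Cr_{a,b}})_1$, and identifies the tails: the inductive hypothesis applied to $\lambda^{\Sr_{a,b}}$ gives $(\lambda^{\Sr_{a,b}})^{\Reg_{a,b}\M_b\Tr}=(\lambda^{\Sr_{a,b}})^{\Cr_{a,b}}$, whence $(\lambda^{\Reg_{a,b}})^{\J_b\M_b\Tr}=\lambda^{\Sr_{a,b}\Cr_{a,b}}$ and the two recursions agree term by term. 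To run the induction I must also check that $\lambda^{\Sr_{a,b}}$ stays $\Cr_{a,b}$- and $\Reg_{a,b}$-valid with all $b$-divisible hooks shallow-or-steep; this is the analogue of the three-case hook comparison at the end of Section 3, now enlarged to track steep hooks and $\Reg_{a,b}$-validity.

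\textbf{The main obstacle.} The crux is the displayed commutation. In Theorem \ref{theorem generalizing BOX} one has the much simpler $\lambda^{\J_b}=\lambda^{\Sr_{a,b}}$, because for shallow partitions the first-row sliding of $\Sr_{a,b}$ literally reproduces the truncated $b$-rim. Here the dual-ladder moves defining $\Reg_{a,b}$ run transverse to the ladder moves of $\Sr_{a,b}$ whenever $a\neq b-a$, so the two sides are built from genuinely different lattice directions and one must prove they commute. I would attack this through the rectangular decomposition: using $2a<b$ and Lemma \ref{lemma prohibiting shallow after steep rectangle}, the $b$-rim of $\lambda$ should split into well-separated shallow blocks (rectangles of $x$-dimension $\le a$) and steep blocks (rectangles of $x$-dimension $\ge b-a+1$), with $\Reg_{a,b}$ acting trivially on the former and reshaping the latter; the task is to show that regularizing a steep block and then peeling by $\J_b$ equals peeling by $\Sr_{a,b}$ and then regularizing. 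A complementary, perhaps cleaner, route is the $b$-abacus: by Lemma \ref{reg preserves core} all three of $\Reg_{a,b}$, $\Cr_{a,b}$ and $\M_b\Tr$ fix $\Core_b$, so the identity may be recast on the $b$-quotient, where both regularization and the Mullineux step admit bead-theoretic descriptions.

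\textbf{The converse, part (ii).} For part (ii) I would use coprimality of $a$ and $b$, so that each ladder is exactly the set of integer points on its line, and argue by contraposition. If some $b$-divisible hook of $\lambda$ is neither shallow nor steep, its rectangular decomposition must contain an intermediate rectangle of $x$-dimension strictly between $a$ and $b-a+1$; choosing the northeast-most such hook, I would show this rectangle survives both $\Reg_{a,b}$ and the $b$-rim procedure and forces the first-row bookkeeping above to fail at the corresponding stage, so that $(\lambda^{\Reg_{a,b}})^{\M_b\Tr}\neq\lambda^{\Cr_{a,b}}$; this parallels the converse expected in Conjecture \ref{reverse1.2}. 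The obstacle is rigidity: one must guarantee that the discrepancy introduced by the bad hook is not silently cancelled by interactions with neighbouring hooks, which I expect to require a minimal-counterexample analysis in the style of Bessenrodt--Olsson--Xu and Fayers.
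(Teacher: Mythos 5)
This statement is posed as a conjecture in the paper, and the paper offers no unconditional proof of either part; so there is no complete argument of yours to check against a complete argument of theirs. What you have written is a strategy outline, and its central step is left open: the commutation identity $(\lambda^{\Reg_{a,b}})^{\J_b}=(\lambda^{\Sr_{a,b}})^{\Reg_{a,b}}$ to which you reduce part (i) is exactly the analogue of Lemma \ref{lemma relating colseg with J} that would have to be established in the mixed shallow/steep regime, and you explicitly flag it as ``the main obstacle'' without resolving it. The same holds for the verification that $\lambda^{\Sr_{a,b}}$ inherits $\Reg_{a,b}$-validity and the shallow-or-steep hook condition (the analogue of the three-case analysis in the proof of Theorem \ref{theorem generalizing BOX}, which in the steep-present setting is not a routine extension), and for the ``rigidity'' needed in your contrapositive for part (ii). So this is a plausible research plan, not a proof; the reduction bookkeeping you do carry out (the equivalence of the two formulations, the size count $|\lambda^{\Reg_{a,b}}|=|\lambda|$, and the term-by-term matching of the recursions \eqref{eq: M recursion} and \eqref{eq: Cr recursion} granted the commutation) is correct but is the easy part.

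For part (ii) it is worth noting that the paper does give a proof, but only conditionally on Conjecture \ref{Fayers2.2}, and by a completely different, algebraic route: from $\lambda^{\Reg_{a,b}\M_b}=\lambda^{\Tr\Reg_{a,b}}$ one equates the $q$-decomposition numbers $d_{\lambda^{\Tr},\lambda^{\Tr\Reg_{a,b}}}(q)=q^{s(\lambda^{\Tr})}$ and $d_{\lambda^{\Tr},\lambda^{\Reg_{a,b}\M_b}}(q)=q^{|\lambda|_b}d_{\lambda,\lambda^{\Reg_{a,b}}}(q^{-1})=q^{|\lambda|_b-s(\lambda)}$ using Theorem \ref{LLTcoefficients}(iii) and Lemma \ref{reg preserves core}, deducing $s(\lambda)+s(\lambda^{\Tr})=|\lambda|_b$, and then invokes Lemma \ref{simultaneously shallow/steep} (this is where $2a<b$ enters) to conclude every $b$-divisible hook is shallow or steep. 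Your proposed minimal-counterexample combinatorial argument is a genuinely different route; if it could be carried out it would have the advantage of being unconditional, but as written it replaces the paper's one conjectural input with an unexecuted case analysis, so it does not currently establish anything the paper does not.
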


%%%%%
Conjecture \ref{reverse1.2} and part (2) of Conjecture \ref{theorem generalizing Fayers} are closely related to the $q$-decomposition numbers  and we first recall the main facts about Iwahori-Hecke algebras that motivates the representation-theoretic relationship of Mullineux involution and column regularization.
%which is the $\mathbb{F}$-algebra with generators $T_1,\ldots, T_{n-1}$ satisfying:
%\begin{align*}
%(T_i-q)(T_i+1) & =0 & \text{for } i =1,\ldots n-1\\
%T_iT_j & =T_jT_i  & \text{for } 1\leq i<j-1\leq n-2\\
%T_iT_{i+1}T_i & =  T_{i+1}T_iT_{i+1}   & \text{for } i =1,\ldots n-2
%\end{align*}

Let $\mathbb{F}$ be a field and $q\in\mathbb{F}\setminus \{0\}$ and denote $\mathcal{H} = \mathcal{H}_{\mathbb{F},q}(S_n)$ to be the Iwahori-Hecke algebra of $S_n$. We refer to \cite{mathas1999iwahori} for definitions and representation theory of $\mathcal{H}$. Let $b$ be the minimal integer satisfying $1+q+\cdots+q^{b-1}=0$ with $b\ge 2.$ For every partition $\lambda$ of $n$, there is a Specht module $S^{\lambda}$ of $\mathcal{H}$, and if $\lambda$ is $b$-regular, there is a simple module $D^\lambda$ which is the co-socle of $S^{\lambda}$. The \textit{decomposition numbers} $d_{\lambda,\mu}=\left[S^\lambda:D^\mu\right]$ satisfy the following important properties:

\begin{Proposition}[\cite{james1976decomposition,Lascoux1996}]
The decomposition numbers $d_{\lambda,\mu}$ satisfies the following:
\begin{enumerate}
\item $d_{\lambda,\mu} = 0 $ unless $\lambda$ and $\mu$ have the same $b$-core;
\item $d_{\lambda,\mu} = 0 $ unless $\mu\unlhd\lambda^{\Reg_{1,b}}$ and $d_{\lambda,\lambda^{\Reg_{1,b}}}=1$;
\item If $\mu$ is $b$-regular, $d_{\lambda,\mu}=d_{\lambda^{\Tr},\mu^{\M_b}}$.
\end{enumerate}

\end{Proposition}

The $q$-analogue of decomposition numbers $d_{\lambda,\mu}(q)$ is defined by considering the graded multiplicity of $D^\mu$ inside $S^\lambda$ and we refer to \cite{kleshchev2010representation} for details regarding its definitions and properties. Lascoux, Leclerc and Thibon conjectured in \cite{Lascoux1996} that when $\mathbb{F}=\mathbb{C}$, the $d_{\lambda,\mu}(q)$'s are the same coefficients in the expansion of the lower global crystal basis $\{G(\mu)\mid\mu\in\mathcal{P} \text{ is }b\con\text{regular}\}$ explained as follows. This result was proved by Ariki in \cite{ariki1996decomposition}.

The readers may refer to \cite{Lascoux1996} for details on the quantized affine Lie algebra $U_q(\hsl_b)$ and its action on the \textit{Fock space} $\mathcal{F}=\bigoplus_{\lambda\in\mathcal{P}}\mathbb{Q}(q)\lambda$, which was originally developed by Misra-Miwa \cite{misra1990crystal} using the work of Hayashi \cite{hayashi1990q}. In our discussion, we state without proof Kashiwara's existence and uniqueness of crystal bases of the integrable highest weight modules of affine quantum algebra:

\begin{Theorem}[\cite{kashiwara1991crystal}]
\label{kashiwara}
Denote $M(\Lambda_0)_{\mathbb{Q}}=U_{\mathbb{Q}}^-\emptyset$ where $U^-_{\mathbb{Q}}\subset U_q(\hsl_b)$ is the $\mathbb{Q}[q,q^{-1}]$-sub-algebra generated by the Chevalley generators $f_i^{(m)}$ of $U_q(\hsl_b)$. Then there exists a unique $\mathbb{Q}[q,q^{-1}]$-basis (which is called the lower crystal basis) of $M(\Lambda_0)_{\mathbb{Q}}$:
$$\{G(\mu):\mu \text{ is } b\text{-regular}\},$$ such that the following holds:
\begin{enumerate}
\item $G(\mu)\equiv\mu\text{ mod }qL$;
\item $\overline{G(\mu)}=G(\mu)$.
\end{enumerate}
\end{Theorem}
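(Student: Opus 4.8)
The plan is to read conditions (i) and (ii) as the defining properties of a canonical (global) basis and to split the statement into a soft \emph{uniqueness} step and a harder \emph{existence} step, carrying both out in the Fock space realization $\mcF$ so that the ladder construction of \cite{Lascoux1996} becomes available. The key structural input, which both halves will rest on, is that the bar involution $\overline{\cdot}$ (the $\mathbb{Q}$-semilinear map with $\overline{q}=q^{-1}$ fixing the highest weight $\emptyset$ and the divided powers $f_i^{(m)}$) acts \emph{unitriangularly} with respect to the dominance order: for every partition $\nu$ one has $\overline{\nu}\in \nu+\sum_{\rho\rhd\nu}\lqpoly\,\rho$. I would first reduce everything to establishing this unitriangularity, and only then produce $G(\mu)$ for each $b$-regular $\mu$.

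For uniqueness, suppose $G(\mu)$ and $G'(\mu)$ both satisfy (i) and (ii). Writing each in the partition basis, property (i) forces the form $\mu+\sum_{\nu\rhd\mu}(\cdots)\nu$ with off-diagonal coefficients in $q\mathbb{Q}[q]\subset qL$, where $L=\sum_\nu \mathbb{Q}[q]\,\nu$ is the crystal lattice. Hence $x=G(\mu)-G'(\mu)=\sum_{\nu\rhd\mu}c_\nu\,\nu$ with each $c_\nu\in q\mathbb{Q}[q]$, and $x$ is bar-invariant. Let $\nu_0$ be a dominance-minimal partition with $c_{\nu_0}\neq 0$. Because $\overline{\nu}$ corrects $\nu$ only by strictly dominating partitions, the coefficient of $\nu_0$ in $\overline{x}$ receives a contribution only from the diagonal term, so comparing coefficients of $\nu_0$ in $\overline{x}=x$ gives $\overline{c_{\nu_0}}=c_{\nu_0}$; but a nonzero element of $q\mathbb{Q}[q]$ cannot equal its bar (which lies in $q^{-1}\mathbb{Q}[q^{-1}]$), a contradiction. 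Thus $x=0$ and the bar-invariant lift is unique.

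For existence, I would first build the lattice $L$ and its reduction $B=L/qL$ using the Kashiwara operators $\tilde e_i,\tilde f_i$, identifying $B$ with the connected component of $\emptyset$ in the Fock space crystal; in the Misra--Miwa model the combinatorics of the $f_i^{(m)}$ shows this component has vertex set exactly $\{\mu\mid \mu \text{ is } b\text{-regular}\}$, matching the claimed index set. Granting unitriangularity of $\overline{\cdot}$, I would then run the Kazhdan--Lusztig recursion inside each finite weight space $M(\Lambda_0)_{\mathbb{Q}}\cap(\text{degree } n)$, which is a finite-dimensional space graded by the finite dominance poset of partitions of $n$: starting from the dominance-maximal $b$-regular $\mu$ and descending, I correct $\mu$ by already-constructed terms so as to cancel the non-self-bar part of $\overline{\mu}$, producing $G(\mu)=\mu+\sum_{\nu\rhd\mu}d_{\nu\mu}(q)\,\nu$ with $d_{\nu\mu}\in q\mathbb{Q}[q]$ and $\overline{G(\mu)}=G(\mu)$. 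By construction $G(\mu)\equiv\mu\pmod{qL}$, which is (i), and bar-invariance is (ii); the finiteness of each weight space guarantees the recursion terminates.

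The crux, and the step I expect to be the main obstacle, is proving the unitriangularity of the bar involution, equivalently the \emph{balancedness} of the triple $(L,\overline{L},M(\Lambda_0)_{\mathbb{Q}})$. This is exactly where the explicit combinatorics must enter: one has to show that applying a divided power $f_i^{(m)}$ to a partition produces that partition together with corrections that are strictly larger in dominance order and have coefficients in $\lqpoly$, so that the bar involution can never lower a partition and has diagonal coefficient $1$. In the Fock space this is the content of the ladder description of the $\un$-action, and verifying it rigorously (rather than the ensuing formal recursion) is the genuinely substantial part; once it is in place, both the uniqueness argument above and the descending construction of $\{G(\mu)\}$ go through, and the identity $L=\sum_\mu\mathbb{Q}[q]\,G(\mu)$ follows immediately from the triangular change of basis.
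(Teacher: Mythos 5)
This is Theorem \ref{kashiwara}, which the paper imports from \cite{kashiwara1991crystal} without proof, so there is no internal argument to compare your attempt against; I can only assess the sketch on its own terms. The overall architecture is the standard one (and is essentially the route of \cite{Lascoux1996} in the Fock space rather than Kashiwara's original grand-loop induction): your uniqueness argument is complete and correct modulo one input, and the descending Kazhdan--Lusztig-type recursion for existence is likewise formally correct, terminating because each degree-$n$ piece is supported on the finite set $\mathcal{P}_n$. Invoking the Misra--Miwa crystal \cite{misra1990crystal} to identify the index set with the $b$-regular partitions is also the right ingredient.

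The genuine gap is the one you flag yourself but do not close: the unitriangularity of the bar involution with respect to dominance order. Both halves of your argument are conditional on it, so as written the proposal proves nothing unconditionally. Worse, the one concrete sentence describing what must be verified --- ``applying a divided power $f_i^{(m)}$ to a partition produces that partition together with corrections that are strictly larger in dominance order'' --- is not a meaningful statement: $f_i^{(m)}$ raises the degree by $m$, so $f_i^{(m)}\nu$ cannot contain $\nu$ in its expansion at all. What actually has to be proved is that the monomials $f_{i_1}^{(m_1)}\cdots f_{i_k}^{(m_k)}\emptyset$ attached to the ladder decompositions of the $b$-regular partitions $\mu$ expand as $\mu$ plus a $\lqpoly$-combination of partitions comparable to $\mu$ in dominance with strict inequality; since these monomials are manifestly bar-invariant and form a basis of $M(\Lambda_0)_{\mathbb{Q}}$, unitriangularity of the bar involution follows from that expansion, not the other way around. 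Establishing it requires tracking how the boxes added along a ladder interact with dominance, and that is the real mathematical content of the theorem in this realization. Finally, your triangularity runs in the direction $G(\mu)=\mu+\sum_{\nu\rhd\mu}d_{\nu\mu}(q)\,\nu$, which is internally consistent but opposite to the paper's own Theorem \ref{LLTcoefficients}(ii), where $d_{\lambda,\mu}(q)=0$ unless $\lambda\unlhd\mu$; you should fix a convention compatible with the ladder computation before attempting the unitriangularity step, since that step is sensitive to which direction the corrections go.
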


We expand $G(\mu)$ using the standard basis $\mathcal{P}$:
$$G(\mu)=\sum_{\lambda}d_{\lambda,\mu}(q)\lambda.$$
The \textit{$q$-decomposition numbers} $d_{\lambda,\mu}(q)$ satisfies the following properties:

\begin{Theorem}[\cite{Lascoux1996,varagnolo1999decomposition,schiffmann2000hall}]
\label{LLTcoefficients}
The $q$-decomposition numbers satisfy the following:
\begin{enumerate}
\item $d_{\lambda,\mu}(q)\in\mathbb{N}[q]$;
\item $d_{\mu,\mu}(q)=1$ and $d_{\lambda,\mu}(q)=0$ unless $\lambda\unlhd\mu$, $|\lambda|=|\mu|$, and $\Core_b(\lambda)=\Core_b(\mu)$;
\item $d_{\lambda^{\Tr},\mu^{\M_b}}(q) = q^{|\mu|_b}d_{\lambda,\mu}(q^{-1})$, where $|\mu|_b$ is the $b$-weight of $\mu$, which is the number of hooks in $\mu$ that is divisible by $b$. 
\end{enumerate}
\end{Theorem}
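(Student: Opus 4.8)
The statement collects standard properties of Kashiwara's lower global basis, so the plan is to extract each item from the structure of the basic representation $M(\Lambda_0)_{\mathbb{Q}}=U_{\mathbb{Q}}^-\emptyset\subset\mathcal{F}$ rather than to reprove the existence asserted in Theorem \ref{kashiwara}. Throughout I regard $G(\mu)=\sum_\lambda d_{\lambda,\mu}(q)\lambda$ as the coordinate expansion in the standard basis $\{\lambda\}$ of the Fock space, writing $L$ for the crystal lattice and $v\mapsto\overline{v}$ for the bar involution from that theorem.

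First I would dispose of (ii) by weight and triangularity arguments. The Chevalley generator $f_i$ acts on $\mathcal{F}$ by adding addable boxes of residue $i$, so it preserves the total number of boxes while increasing the residue-$i$ count by one. Hence $G(\mu)\in U_{\mathbb{Q}}^-\emptyset$ is a weight vector, and every $\lambda$ with $d_{\lambda,\mu}\neq0$ shares the weight $\Lambda_0-\sum_i c_i\alpha_i$ of $\mu$, where $c_i$ counts the boxes of $\mu$ of residue $i$. Equal total size gives $|\lambda|=|\mu|$, and equal residue content for fixed size is, by the abacus description of cores in Section 2.3, exactly $\Core_b(\lambda)=\Core_b(\mu)$. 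The conditions $d_{\mu,\mu}(q)=1$ and $d_{\lambda,\mu}(q)=0$ unless $\lambda\unlhd\mu$ then follow by combining the normalization $G(\mu)\equiv\mu\pmod{qL}$ with the unitriangularity of the bar involution: $\overline{\mu}$ equals $\mu$ plus a $\mathbb{Z}[q,q^{-1}]$-combination of partitions $\nu\lhd\mu$, so solving the bar-invariance equation $\overline{G(\mu)}=G(\mu)$ recursively in dominance order yields a unique solution supported on $\{\lambda\unlhd\mu\}$ with leading coefficient $1$ and off-diagonal entries in $q\mathbb{Z}[q]$.

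Next I would treat (i). The recursion just described already forces $d_{\mu,\mu}=1$ and $d_{\lambda,\mu}\in q\mathbb{Z}[q]$ for $\lambda\neq\mu$, so the only genuine content is the positivity $d_{\lambda,\mu}\in\mathbb{N}[q]$. For this I would not argue combinatorially but invoke the geometric identification of the $q$-decomposition numbers with parabolic affine Kazhdan--Lusztig polynomials: by Varagnolo--Vasserot \cite{varagnolo1999decomposition} the $d_{\lambda,\mu}$ coincide with such polynomials for the affine symmetric group, whose coefficients are dimensions of stalks of intersection-cohomology sheaves on affine Schubert varieties and are therefore nonnegative; Schiffmann's Hall-algebra realization \cite{schiffmann2000hall} supplies an alternative positivity proof. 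This is the main obstacle, and it is precisely the step that lies outside the elementary diagrammatics of the present paper.

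Finally, for (iii) I would build the transpose symmetry. Let $\sigma\colon\mathcal{F}\to\mathcal{F}$ be the $\mathbb{Q}$-linear map $\sigma(\lambda)=\lambda^{\Tr}$; since transposition sends a box of residue $i$ to one of residue $-i$, the map $\sigma$ intertwines the $U_q(\hsl_b)$-action with its composite with the diagram automorphism $i\mapsto -i$ together with $q\mapsto q^{-1}$. The semilinear involution assembled from $\sigma$ and the bar involution therefore permutes canonical basis elements, and tracking the crystal-lattice normalization shows that it sends $G(\mu)$ to $G(\mu^{\M_b})$ up to the bar involution and the factor $q^{|\mu|_b}$; here the $b$-regular label transforms exactly by the Mullineux map $\M_b$, which is the content of its crystal-theoretic characterization in \cite{Lascoux1996}, and the degree shift $q^{|\mu|_b}$ records the number of $b$-divisible hooks, i.e.\ the $b$-weight. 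Reading off the coefficient of $\lambda^{\Tr}$ and using $\overline{G(\mu)}=G(\mu)$ to convert $q\mapsto q^{-1}$ then yields $d_{\lambda^{\Tr},\mu^{\M_b}}(q)=q^{|\mu|_b}d_{\lambda,\mu}(q^{-1})$, as claimed.
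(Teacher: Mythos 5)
The paper offers no proof of this theorem: it is imported verbatim from \cite{Lascoux1996,varagnolo1999decomposition,schiffmann2000hall}, so there is no internal argument to compare yours against. Judged on its own terms, your outline is an accurate roadmap through that literature. Part (ii) via the weight-space decomposition (equal residue content forces equal size and equal $b$-core, by the abacus description) together with unitriangularity of the bar involution in dominance order is the standard Leclerc--Thibon argument; part (i) correctly isolates positivity as the one genuinely deep step and delegates it to the Kazhdan--Lusztig/geometric identification of \cite{varagnolo1999decomposition} and the Hall-algebra approach of \cite{schiffmann2000hall}, which is exactly where the paper's citation places the burden; part (iii) is the LLT transpose symmetry, and your description of the semilinear involution built from $\lambda\mapsto\lambda^{\Tr}$, the diagram automorphism $i\mapsto-i$, and $q\mapsto q^{-1}$ is the right mechanism. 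The only places where your sketch glosses over real work are in (iii): verifying the precise intertwining relation of $\sigma$ with the Hayashi--Misra--Miwa action (the $q$-powers in the action of $f_i$ count addable/removable boxes to one side, and transposition swaps sides, which is where $q\mapsto q^{-1}$ actually enters), identifying the relabelling of the $b$-regular index as the Mullineux map $\M_b$, and pinning down the exponent as the $b$-weight $|\mu|_b$; each of these is a theorem in \cite{Lascoux1996}, not a formality. Since the paper itself treats the entire statement as a black box, this level of delegation is appropriate, and nothing in your argument is wrong.
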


The third property in the above theorem is important in relating the Mullineux involution with the $q$-decomposition numbers.

\begin{Conjecture}
\label{Fayers2.2}
Suppose that $\lambda$ and $\mu$ are partitions of the same size and $\lambda$ is $\Reg_{a,b}$-valid and $\mu$ is $(a,b)$-regular. Then $d_{\lambda,\lambda^{\Reg_{a,b}}}(q) = q^{s(\lambda)}$, where $s(\lambda)$ is the number of $(a,b)$-steep hooks in $\lambda$ that are divisible by $b$.
\end{Conjecture}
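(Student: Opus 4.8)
The plan is to prove both assertions simultaneously by induction on $|\lambda|$, modeled on Fayers' $a=1$ argument in \cite{fayers2008regularisation} but run in parallel with the recursion $\lambda^{\Cr_{a,b}}=(|\lambda|-|\lambda^{\Sr_{a,b}}|)\oplus\lambda^{\Sr_{a,b}\Cr_{a,b}}$ of Equation~\eqref{eq: Cr recursion}, passing to $\Reg_{a,b}$ by transposition. It is harmless to fix one $b$-core block: by property~(ii) of Theorem~\ref{LLTcoefficients} a nonzero $d_{\lambda,\mu}$ forces $\Core_b(\lambda)=\Core_b(\mu)$, while Lemma~\ref{reg preserves core} gives $\Core_b(\lambda^{\Reg_{a,b}})=\Core_b(\lambda)$ and $|\lambda^{\Reg_{a,b}}|_b=|\lambda|_b$. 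I would also first reduce to $\gcd(a,b)=1$, so that the dual ladders are exactly the integer points on their lines and the crystal description of the $d_{\lambda,\mu}$ in \cite{Lascoux1996} is available without degeneracy.

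For the vanishing statement, observe first that $\lambda\unlhd\lambda^{\Reg_{a,b}}$, since $\Reg_{a,b}$ only slides boxes north along dual ladders, which can only raise the partial sums $\sum_{i\le k}\lambda_i$. It then remains to upgrade the triangularity $d_{\lambda,\mu}\ne0\Rightarrow\lambda\unlhd\mu$ to $\lambda^{\Reg_{a,b}}\unlhd\mu$. The enabling fact, recorded after Definition~\ref{ladders}, is that every box on a dual ladder carries the same $b$-residue; consequently the divided powers $f_i^{(m)}$ that assemble $G(\mu)$ in the LLT construction add residue-$i$ boxes along dual ladders, and I would show by induction on this construction that every monomial occurring in $f_i^{(m)}G(\nu)$ has $\lambda'$-index with $(\lambda')^{\Reg_{a,b}}\unlhd\mu$. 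Starting from the empty partition then yields $\lambda^{\Reg_{a,b}}\unlhd\mu$ for every $\lambda$ in the support of $G(\mu)$.

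The leading coefficient is where the main theorem enters. In the extremal case that every hook of $\lambda$ divisible by $b$ is $(a,b)$-steep, so $s(\lambda)=|\lambda|_b$, the transpose $\lambda^{\Tr}$ is $\Cr_{a,b}$-valid with all such hooks $(a,b)$-shallow, and Theorem~\ref{theorem generalizing BOX} applied to $\lambda^{\Tr}$ gives $(\lambda^{\Reg_{a,b}})^{\M_b}=\lambda^{\Tr}$; substituting this into the Mullineux duality~(iii) with $\mu=\lambda^{\Reg_{a,b}}$ and using $d_{\lambda^{\Tr},\lambda^{\Tr}}=1$ together with $|\lambda^{\Reg_{a,b}}|_b=|\lambda|_b$ yields $d_{\lambda,\lambda^{\Reg_{a,b}}}(q)=q^{|\lambda|_b}=q^{s(\lambda)}$ outright. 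For general $\lambda$ the divisible hooks split into steep, shallow, and (when $b>2a$) neither, and I would induct: removing the top dual ladder---the transpose of peeling the first row under $\Sr_{a,b}$---and matching it against a single application of the appropriate $f_i^{(m)}$ to the global basis vector of the smaller partition. The statement to propagate is that each $(a,b)$-steep divisible hook contributes exactly one factor of $q$, as its foot overtakes its hand when boxes slide north, whereas shallow and neither hooks contribute $q^0$; the rectangular bookkeeping of Proposition~\ref{proposition rectangular decomposition} is what identifies this count with $s(\lambda)$.

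The hard part will be reconciling the $b$-based crystal structure with the steeper $(a,b)$-ladder geometry. Although both the slope-$1$ diagonals governing $f_i^{(m)}$ and the $(a,b)$-dual ladders respect the $b$-residue, they group the residue-$i$ boxes differently, so the clean ladder-by-ladder correspondence available when $a=1$ no longer holds verbatim. The crux is a cancellation lemma---an analogue of Lemma~\ref{lemma prohibiting shallow after steep rectangle}---guaranteeing that in passing from the smaller global basis vector to $G(\lambda)$ no steep rectangle is overtaken out of order, so that the coefficient of $\lambda^{\Reg_{a,b}}$ remains monic of degree exactly $s(\lambda)$ and no dominance-lower $(a,b)$-regular partition survives. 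A secondary obstacle is the non-coprime case, where each dual ladder is a proper sublattice of its line and the crystal interpretation must be supplemented by a direct bar-invariance argument built on property~(iii).
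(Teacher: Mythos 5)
This statement is left as a conjecture in the paper --- there is no proof to compare against. The paper only remarks that it is the $(a,b)$-generalization of Fayers' Theorem~2.2 (the $a=1$ case) and uses it as a hypothesis to derive Conjecture~\ref{reverse1.2} and the necessity direction of Conjecture~\ref{theorem generalizing Fayers}. So the question is whether your proposal actually closes the gap the authors left open, and it does not: it is a plan whose central step is asserted rather than proved. Concretely, the vanishing statement rests on your claim that every monomial occurring in $f_i^{(m)}G(\nu)$ is indexed by a partition $\lambda'$ with $(\lambda')^{\Reg_{a,b}}\unlhd\mu$. In Fayers' $a=1$ argument this works because the regularization ladders are exactly the residue strings along which the divided powers $f_i^{(m)}$ add boxes in the LLT algorithm; for general $a$ the $(a,b)$-dual ladders are a strictly coarser regrouping of the residue-$i$ boxes, and the compatibility you need is precisely the content of the conjecture, not an input to it. You acknowledge this yourself in the final paragraph (``the clean ladder-by-ladder correspondence available when $a=1$ no longer holds verbatim''), and the ``cancellation lemma'' you invoke to fix it is named but neither stated nor proved. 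The same issue blocks the general case of the leading-coefficient claim: the inductive step of peeling off the top dual ladder and matching it against one application of $f_i^{(m)}$ is exactly where the two geometries disagree, so the assertion that each steep divisible hook contributes one factor of $q$ is unsubstantiated. The non-coprime case is likewise only flagged.

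What you do have that is sound and worth keeping: the reduction to a fixed $b$-core block via Theorem~\ref{LLTcoefficients}(ii) and Lemma~\ref{reg preserves core}; the observation that $\lambda\unlhd\lambda^{\Reg_{a,b}}$ since sliding boxes north along dual ladders can only increase partial row sums; and, most usefully, the extremal case. When every hook of $\lambda$ divisible by $b$ is $(a,b)$-steep, applying Theorem~\ref{theorem generalizing BOX} to $\lambda^{\Tr}$ gives $(\lambda^{\Reg_{a,b}})^{\M_b}=\lambda^{\Tr}$, and substituting into Theorem~\ref{LLTcoefficients}(iii) with $\mu=\lambda^{\Reg_{a,b}}$ yields $d_{\lambda,\lambda^{\Reg_{a,b}}}(q)=q^{|\lambda|_b}=q^{s(\lambda)}$. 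That is a genuine consistency check between the conjecture and the paper's main theorem, and it is the one piece of your write-up that constitutes a complete argument; but it covers only the case $s(\lambda)=|\lambda|_b$, and the statement as a whole remains open.
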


Conjecture \ref{Fayers2.2} generalizes a part of Fayers's main theorem \cite[Theorem 2.2]{fayers2007q} in terms of the generalized (resp. column) regularization. In fact, this is the only ingredient needed for Conjecture \ref{reverse1.2} and part (2) of Conjecture \ref{theorem generalizing Fayers}. Assume that Conjecture \ref{Fayers2.2} holds, then we have:

\begin{proof}[Proof of Conjecture \ref{reverse1.2}]
Suppose a $\Cr_{a,b}$-valid partition $\lambda$ satisfies $\lambda^{\M_b\Tr} = \lambda^{\Cr_{a,b}}$, then $$d_{\lambda^{\Tr},\lambda^{\M_b}}(q) = d_{\lambda^{\Tr},\lambda^{\Tr\Reg_{a,b}}}(q).$$ By Conjecture \ref{Fayers2.2}, $$d_{\lambda^{\Tr},\lambda^{\Tr\Reg_{a,b}}}(q) = q^{s(\lambda^{\Tr})},$$ where $s(\lambda^{\Tr})$ is also the number of $(a,b)$-shallow hooks in $\lambda.$ 

From Theorem \ref{LLTcoefficients}, we know that $$d_{\lambda^{\Tr},\lambda^{\M_b}} = q^{|\lambda|_b}d_{\lambda,\lambda}(q^{-1}) = q^{|\lambda|_b}.$$ Therefore, $|\lambda|_b= s(\lambda^{\Tr}),$ so all hooks in $\lambda$ divisible by $b$ must be $(a,b)$-shallow.  
\end{proof}

\begin{proof}[Proof of part (2) of Conjecture \ref{theorem generalizing Fayers}]
Suppose we have a both $\Cr_{a,b}$-valid and $\Reg_{a,b}$-valid partition $\lambda$ satisfying $\lambda^{\Reg_{a,b}\M_b} = \lambda^{\Tr\Reg_{a,b}}$. Then we have $$d_{\lambda^{\Tr},\lambda^{\Tr\Reg_{a,b}}} (q)= d_{\lambda^{\Tr},\lambda^{\Reg_{a,b}\M_b}}(q).$$ 
On the one hand, by Conjecture \ref{Fayers2.2}, $$d_{\lambda^{\Tr},\lambda^{\Tr\Reg_{a,b}}} (q)= q^{s(\lambda^{\Tr})}.$$ 

On the other hand, $$d_{\lambda^{\Tr},\lambda^{\Reg_{a,b}\M_b}}(q)=q^{|\lambda^{\Reg_{a,b}}|_b}d_{\lambda,\lambda^{\Reg_{a,b}}}(q^{-1})=q^{|\lambda|_b}d_{\lambda,\lambda^{\Reg_{a,b}}}(q^{-1})=q^{|\lambda|_b}q^{-s(\lambda)},$$
where the first equality follows from Theorem \ref{LLTcoefficients}, the second from Lemma \ref{reg preserves core} and the last one from Conjecture \ref{Fayers2.2}. Therefore, we find that $s(\lambda) + s(\lambda^{\Tr}) = |\lambda|_b$. Lemma \ref{simultaneously shallow/steep} indicates no hooks can be both $(a,b)$-shallow and $(a,b)$-steep in case of $2a<b$, which means all hooks divisible by $b$ must be either shallow or steep.
\end{proof}

%Finally we would like to point out that the $(a, b)$-regular partitions form an important collection, whose algebraic interpretations will be studied in the later papers of this series.

%\newpage
\bibliographystyle{alpha}
\bibliography{MTCOLREG}

\end{document}